\DeclareMathOperator{\Var}{Var}
\DeclareMathOperator{\Corr}{Corr}
\newcommand{\Z}{\mathbb{Z}}
\newcommand{\Q}{\mathbb{Q}}
\newcommand{\R}{\mathbb{R}}
\newcommand{\cS}{\mathcal{S}}
\newcommand{\cV}{\mathcal{V}}
\newcommand{\Dimh}{{\rm Dim}_{_{\rm H}}}
\newcommand{\be}{\begin{equation}}
\newcommand{\ee}{\end{equation}}
\newcommand{\lip}{\text{\rm Lip}_\sigma }
\renewcommand{\P}{\mathrm{P}}
\newcommand{\E}{\mathrm{E}}
\newcommand{\1}{\boldsymbol{1}}
\renewcommand{\d}{{\rm d}}
\newcommand{\e}{{\rm e}}
\renewcommand{\geq}{\geqslant}
\renewcommand{\leq}{\leqslant}
\renewcommand{\ge}{\geqslant}
\renewcommand{\le}{\leqslant}
\author{Davar Khoshnevisan\\University of Utah
\and Kunwoo Kim\\POSTECH
\and Yimin Xiao\\Michigan State University}
\title{\bf A macroscopic multifractal analysis\\of parabolic stochastic PDEs\thanks{
	Research supported in part by the NSF grants DMS-1307470, DMS-1608575
	and DMS-1607089 [D.K. \&\ Y.X.]
	and 0932078000 [K.K. through The
	Mathematical Sciences Research Institute at UC Berkeley], and  the National 
	Research Foundation of Korea (NRF-2017R1C1B1005436) and the TJ Park Science Fellowship of POSCO
TJ Park Foundation [K.K]. A portion of this material is based upon work supported also
	by the NSF Grant  DMS-1440140 while D.K. was
	in residence at the Mathematical Sciences Research Institute in Berkeley, CA.}}
\date{May 13, 2017}
\newtheorem{stat}{Statement}[section]
\newtheorem{proposition}[stat]{Proposition}
\newtheorem{corollary}[stat]{Corollary}
\newtheorem{theorem}[stat]{Theorem}
\newtheorem{lemma}[stat]{Lemma}
\theoremstyle{definition}
\newtheorem{remark}[stat]{Remark}
\numberwithin{equation}{section}
\begin{document}
\maketitle
\begin{abstract}
	It is generally argued that the solution to a  stochastic PDE with multiplicative noise---
	such as $\dot{u}=\frac12 u''+u\xi$, where $\xi$ denotes
	space-time white noise---routinely produces exceptionally-large peaks
	that are ``macroscopically multifractal.'' See, for example,
	Gibbon and Doering (2005), Gibbon and Titi (2005), and
	Zimmermann et al (2000). A few years ago, we proved that the
	spatial peaks of the solution to the mentioned stochastic PDE
	indeed form a random multifractal in the macroscopic sense of
	Barlow and Taylor (1989; 1992). The main result of the present paper is
	a proof of a rigorous formulation of the assertion that the spatio-temporal peaks of
	the solution form infinitely-many different multifractals
	on infinitely-many different scales, which we sometimes refer to as
	``stretch factors.'' A simpler, though still complex,
	such structure is shown to also exist for the constant-coefficient version of
	the said stochastic PDE.\\

\noindent{\it Keywords:} Intermittency, space-time multifractality, macroscopic/large-scale
	Hausdorff dimension, stochastic partial differential equations.\\

	\noindent{\it \noindent AMS 2000 subject classification:}
	Primary 60H15; Secondary. 35R60, 60K37.
\end{abstract}

\section{Introduction}

\subsection{The main result}
Let $\xi$ denote space-time white noise,  normalized so that
\[
	\text{\rm Cov}[\xi(t\,,x)\,,\xi(s\,,y)] = \delta_0(t-s)\cdot\delta_0(x-y)
	\hskip0.6in
	\text{for all $s,t\ge 0$ and $x,y\in\R$},
\]
and consider, throughout the paper, the stochastic heat equation
\begin{equation}\label{SHE}
	\dot{u}(t\,,x) = \tfrac12 u''(t\,,x) + \sigma(u(t\,,x))\xi(t\,,x),
\end{equation}
defined on $(t\,,x)\in(0,\infty)\times\R$ with initial datum $u(0)=u_0\in L^\infty(\R)$.
We always will assume that $u_0$ and $\sigma$ are nonrandom real-valued
functions on $\R$, that $\sigma$ is Lipschitz continuous and satisfies
$\sigma(0)=0$, and that $\inf_{x\in\R}u_0(x)>0$.
Among many other things, these conditions ensure that \eqref{SHE} has a unique
continuous and strictly-positive solution $u(t\,,x)$ that has finite
moments of all order, uniformly in
$x$ and locally uniformly in $t$ \cite{Dalang,DZ,Minicourse,CBMS,Mueller,Walsh}.

The main objective of this paper is to study the intermittency properties of
the solution to the stochastic PDE \eqref{SHE}.
In order to recall the meaning of this phrase, let us first define
\[
	\underline\gamma(k) := \liminf_{t\to\infty} t^{-1}
	\log\inf_{x\in\R}\E\left(|u(t\,,x)|^k\right)
	\quad\text{and}\quad
	\overline\gamma(k) := \limsup_{t\to\infty} t^{-1}
	\log\sup_{x\in\R}\E\left(|u(t\,,x)|^k\right),
\]
for all $k\in[2\,,\infty)$. The functions $\underline\gamma$
and $\overline\gamma$ are known respectively as the
\emph{lower} and the \emph{upper moment Lyapunov exponents}
of the solution $u$ to \eqref{SHE}. According to Jensen's inequality,
both $\underline\gamma$ and $\overline\gamma$ are nondecreasing
functions on $[2\,,\infty)$. The solution $u$ to \eqref{SHE}
is said to be \emph{intermittent} if $\underline\gamma$ and
$\overline\gamma$ are both \emph{strictly} increasing on
$[2\,,\infty)$; see  \cite{FK}
and  \cite{BC,CM94,GD,Mandelbrot,Molch91,ZRS} for earlier variations.

It is known that the solution to \eqref{SHE} is intermittent \cite{FK}
under the additional constraint that $\sigma$ satisfies the following condition:
\begin{equation}\label{cond:interm}
	\inf_{|z|>0}| \sigma(z)/z|>0.
\end{equation}
For this reason, we might refer to Condition \eqref{cond:interm}
as an ``intermittency condition.''
The intermittency
condition \eqref{cond:interm} also has quantitative consequences.
For example, it implies---see
\cite{BC,Chen1,Chen2,FK,JKM}---that there exist
finite and positive constants $M_0<N_0$ and $M<N$ such that
\begin{equation}\label{eq:moments:SHE}
	M_0^k\e^{Mk^3 t}\le
	\E\left(|u(t\,,x)|^k\right) \le N_0^k\e^{Nk^3t},
\end{equation}
uniformly for all real numbers $x\in\R$, $t>0$, and $k\ge2$.
It follows from these bounds that
\[
	M k^3\le \underline\gamma(k)\le\overline\gamma(k)\le Nk^3
	\qquad\text{for all $k\in[2\,,\infty)$.}
\]

Also, \eqref{eq:moments:SHE} suggests that the tall spatio-temporal
peaks of the stochastic process $u$ might grow exponentially with time.
For a heuristic argument see the Introductions of
Bertini and Cancrini \cite{BC} and Camona and Molchanov
\cite{CM94}, together with Chapter 7 of Khoshnevisan \cite{CBMS}.
With this connection to spatio-temporal peaks
in mind, let us consider the random space-time set,
\[
	\mathscr{P}(\beta) :=\left\{ (x\,,t)\in\R\times(\e\,,\infty):
	u(t\,,x) > \e^{\beta t}\right\},
\]
of peaks of height profile $t\mapsto\exp(\beta t)$
for every $\beta>0$. When $t\gg1$ and $(x\,,t)\in\mathscr{P}(\beta)$,
we have $u(t\,,x)>\e^{\beta t}\gg1$. The present work, in a sense,
implies that a ``typical'' such pair $(x\,,t)$
in fact satisfies $u(t\,,x)\approx \e^{\beta t}$. Thus, we see that if
$\mathscr{P}(\beta)\neq\varnothing$ a.s.\
for infinitely-many distinct $\beta>0$,
then there are infinitely-many natural length scales in which one can measure
the tall peaks of the solution to \eqref{SHE}. This will verify, quantitatively, a property
that is believed to hold for a large class of ``complex systems''; see Gibbon and Titi
\cite{GibbonTiti} for an argument.

In fact, the situation is more complicated still.
For every $\vartheta>0$ let us define a function
$S_\vartheta:\R\times(0\,,\infty)\to\R\times(1\,,\infty)$ as follows:
\[
	S_\vartheta(x\,,t) := \left( x\,,\e^{t/\vartheta}\right)\qquad
	\text{for all $(x\,,t)\in\R\times(0\,,\infty)$}.
\]
It is easy to see that, for small values of $\vartheta$,
the application $S_\vartheta$
amounts to a nonlinear stretching of
 $\R\times(0\,,\infty)$ in the $t$-direction. For instance,
 \[
 	S_\vartheta\left( [1\,,2]\times[1\,,2]\right) = [1\,,2]\times\left[
	\e^{1/\vartheta},\e^{2/\vartheta}\right].
 \]
 In this way we can see that, when $\vartheta$ is small, $S_\vartheta$
 maps the upright square box $[1\,,2]\times[1\,,2]$ to an elongated, stretched,
 box in the $(x\,,t)$ plane.

Let $\Dimh$ denote the Barlow--Taylor
\cite{BT1989,BT1992} macroscopic Hausdorff dimension on $\R^2$.
[For a detailed definition of $\Dimh$ see \S\ref{sec:Dim} below.]
The following is the main result of this paper.

\begin{theorem}\label{th:SHE}
	If $\sigma$ satisfies the intermittency condition \eqref{cond:interm},
	then there exist finite constants $A>a>0$ and $b,\varepsilon>0$
	such that
	\[
		 2-  A\beta^{3/2}\vartheta\le
		 \Dimh\left[S_\vartheta\left(\mathscr{P}(\beta)\right)\right]
		\le 2-a\beta^{3/2}\vartheta
		\qquad\text{a.s.,}
	\]
	valid for every $\beta>b$ and $\vartheta\in(0\,,\varepsilon\beta^{-3/2})$.
\end{theorem}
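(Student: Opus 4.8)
The plan is to pass to the ``stretched'' coordinates, in which $S_\vartheta$ turns the exponentially tall peaks of $\mathscr{P}(\beta)$ into merely polynomially tall ones: writing $s=\e^{t/\vartheta}$,
\[
	S_\vartheta\!\left(\mathscr{P}(\beta)\right)
	=\left\{(x\,,s)\in\R\times\big(\e^{\e/\vartheta},\infty\big):\ u(\vartheta\log s\,,x)>s^{\beta\vartheta}\right\}.
\]
Inside the $n$-th Barlow--Taylor dyadic shell $\mathcal{S}_n\subseteq\R^2$ one has $s\asymp 2^n$, so the relevant time is $t=\vartheta\log s\asymp\vartheta n$ and the relevant height is $s^{\beta\vartheta}=\e^{\beta t}$. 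The one analytic input I would extract from \eqref{eq:moments:SHE} is a two-sided peak estimate: Chebyshev's inequality applied to the $k$-th moment gives the upper half, and the Paley--Zygmund inequality applied to $u(t\,,x)^k$ together with the upper bound on $\E(|u(t\,,x)|^{2k})$ gives the lower half, so that for suitable constants $0<a<A<\infty$ depending only on the constants in \eqref{eq:moments:SHE}, and a constant $C_\beta$,
\[
	C_\beta^{-1}\,\e^{-A\beta^{3/2}t}\ \le\ \P\!\left(u(t\,,x)>\e^{\beta t}\right)\ \le\ C_\beta\,\e^{-a\beta^{3/2}t}\qquad(x\in\R),
\]
once $\beta$ exceeds some constant $b$ (a fixed multiple of the constant $N$ in \eqref{eq:moments:SHE}), so that the optimal moment order $k\asymp(\beta/N)^{1/2}$ is at least $2$; one checks that $a$ depends only on $N$ whereas $A$ also involves $M$, and that $A>a$ precisely because $M<N$. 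Thus, in $\mathcal{S}_n$, a given point lies in $S_\vartheta(\mathscr{P}(\beta))$ with probability between $2^{-A\beta^{3/2}\vartheta n}$ and $2^{-a\beta^{3/2}\vartheta n}$, up to $\beta$-dependent constants. We choose $\varepsilon$ small enough that, for $\beta>b$ and $\vartheta<\varepsilon\beta^{-3/2}$, the exponent $2-A\beta^{3/2}\vartheta$ lies strictly between $1$ and $2$.

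For the upper bound I would cover $S_\vartheta(\mathscr{P}(\beta))\cap\mathcal{S}_n$ by the unit boxes of $\mathcal{S}_n$ and bound the expected number that meet it. The point is that a unit box in the stretched picture pulls back to a box in the $(x\,,t)$-plane of unit width in $x$ but time-extent $\asymp\vartheta/s\ll1$; bounding the resulting short-time and unit-space oscillations of $u$ by their moments via a Kolmogorov-type estimate (with $\E(|u|^k)$ controlled by \eqref{eq:moments:SHE}) and using the upper peak estimate, each unit box meets $S_\vartheta(\mathscr{P}(\beta))$ with probability $\lesssim C_\beta\,2^{-a\beta^{3/2}\vartheta n}$. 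Since $\mathcal{S}_n$ has $\asymp 2^{2n}$ unit boxes, the expected value of the $\rho$-dimensional Barlow--Taylor content $\nu^\rho_n(S_\vartheta(\mathscr{P}(\beta)))$ is $\lesssim C_\beta\,2^{n(2-a\beta^{3/2}\vartheta-\rho)}$, which is summable in $n$ whenever $\rho>2-a\beta^{3/2}\vartheta$. Hence $\sum_n\nu^\rho_n<\infty$ a.s.\ and $\Dimh[S_\vartheta(\mathscr{P}(\beta))]\le\rho$ a.s.; letting $\rho$ decrease to $2-a\beta^{3/2}\vartheta$ along a sequence yields the claimed upper bound.

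For the lower bound I would build, inside each $\mathcal{S}_n$, an explicit product-type subset of $S_\vartheta(\mathscr{P}(\beta))$. Partition the $s$-range $[2^{n-1},2^n)$ into $\asymp2^n$ intervals $I_j$ of bounded length; over such an $I_j$ the time $\vartheta\log s$ varies by only $\asymp\vartheta2^{-n}$ and the height $s^{\beta\vartheta}$ by a bounded factor, so the event that a spatial site $x$ is a \emph{persistent peak} over $I_j$ contains the intersection of $\{u(\vartheta\log s_j\,,x)>Cs_j^{\beta\vartheta}\}$ (for a reference point $s_j\in I_j$) with a small-oscillation event whose complement, by the short-time modulus-of-continuity bound, has probability far smaller than the main term; hence $\P(\text{persistent peak over }I_j)\gtrsim2^{-A\beta^{3/2}\vartheta n}=:p_n$. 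Tile $[-2^n,2^n)$ by $\asymp p_n\,2^n=2^{(1-A\beta^{3/2}\vartheta)n}=:M_n$ blocks of length $\asymp p_n^{-1}$; using the near-independence of peak events over spatially separated sites---a localisation property of the solution to \eqref{SHE}---together with a second-moment estimate, each block contains a persistent peak over $I_j$ with probability bounded below, and a concentration (Chernoff-type) estimate then shows that, for each $j$, a positive fraction of the $M_n$ blocks has this property, and that this holds for all large $n$ simultaneously a.s. Retaining one peak per such block yields a set $A_j$ of $\gtrsim M_n$ points, at most one per block of length $\asymp p_n^{-1}$, and $E_n:=\bigcup_jA_j\times I_j$ lies in $S_\vartheta(\mathscr{P}(\beta))\cap\mathcal{S}_n$. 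Applying the Barlow--Taylor mass-distribution principle to the natural probability measure on $E_n$---uniform over $j$, then normalized counting measure on $A_j$ times normalized Lebesgue measure on $I_j$---one verifies, scale by scale, that $\nu^\rho_n(E_n)\gtrsim1$ for $\rho=2-A\beta^{3/2}\vartheta$. Therefore $\sum_n\nu^\rho_n(S_\vartheta(\mathscr{P}(\beta)))=\infty$ a.s., whence $\Dimh[S_\vartheta(\mathscr{P}(\beta))]\ge2-A\beta^{3/2}\vartheta$ a.s., the claimed lower bound.

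The step I expect to be the main obstacle is the lower bound, and inside it the passage from ``a given block is hit with probability of order one'' to ``for \emph{every} large $n$, simultaneously over the $\asymp2^n$ intervals $I_j$ and the $\asymp M_n$ spatial blocks, the selected peaks are spread out enough that the mass-distribution principle applies.'' This hinges on a quantitative (near-)independence statement for the solution at spatially separated locations---the genuinely probabilistic input---and on the accompanying Borel--Cantelli bookkeeping. By comparison, the short-time and unit-space oscillation estimates used in both bounds, and the tracking of constants that secures $A>a$, are routine.
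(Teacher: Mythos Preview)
Your overall strategy matches the paper's: the change of variables $s=\e^{t/\vartheta}$, the two-sided tail estimate via Chebyshev and Paley--Zygmund (the paper's Proposition~\ref{pr:tails}), a first-moment covering argument for the upper bound, and a mass-distribution construction backed by spatial localization for the lower bound. You also correctly identify the near-independence step as the main probabilistic input; the paper devotes an entire subsection (culminating in Theorem~\ref{th:localization}) to coupling $u(t\,,x_1),\ldots,u(t\,,x_m)$ to genuinely independent random variables once the $x_j$ are separated by order $t^2$.

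There is, however, a concrete gap in your upper bound. You assert that ``inside the $n$-th Barlow--Taylor shell $\mathcal{S}_n$ one has $s\asymp 2^n$,'' and from this conclude that every unit box in $\mathcal{S}_n$ meets $S_\vartheta(\mathscr{P}(\beta))$ with probability $\lesssim 2^{-a\beta^{3/2}\vartheta n}$. But the shell $\mathcal{S}_n=\mathcal{V}_{n+1}\setminus\mathcal{V}_n$ contains corner regions where $|x|\asymp \e^n$ while $s$ stays bounded (and the symmetric region with the roles reversed). In such a corner the relevant time $\vartheta\log s$ is of order one, so the peak probability is a fixed constant, not $\e^{-a\beta^{3/2}\vartheta n}$, and your covering estimate breaks down there. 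The paper handles this by a preliminary geometric reduction: Proposition~\ref{pr:Epi} and Corollary~\ref{co:Epi} show that the off-diagonal region $\{(x\,,s):\ s\ge x^q\text{ or }x\ge s^q\}$ has macroscopic Hausdorff dimension exactly $1$ for every $q>1$. Since your choice of $\varepsilon$ already forces $2-a\beta^{3/2}\vartheta>1$, these corners are harmless; but you still need that computation, and only on the remaining diagonal piece $(\e^{n/q},\e^{n+1}]^2$---where indeed $s\asymp\e^n$---does your unit-box covering go through. Without this reduction the upper bound does not follow.

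For the lower bound your ``persistent peak over $I_j$'' device is slightly heavier than necessary. The paper avoids any short-time continuity argument on the peak side by working only with \emph{integer} values of $s\in(\e^n,\e^{n+1}]$, placing one atom per spatial block per integer $s$, and then verifying the Barlow--Taylor density condition for the resulting purely atomic measure. Your version is not wrong, just more work; and in either version the decisive step is exactly the one you flag, namely turning the pointwise lower tail into the uniform statement that \emph{every} spatial block of length $\e^{\gamma n}$ contains a peak, simultaneously for all large $n$ and all integer $s$ in the shell, via the coupling-to-independence theorem plus Borel--Cantelli.
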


The following corollary of Theorem \ref{th:SHE}
clarifies the intent of that theorem.

\begin{corollary}\label{co:SHE}
	If \eqref{cond:interm} holds then there exist nonrandom
	numbers $0<\beta_1<\beta_2<\cdots$ and $0<\vartheta_1<\vartheta_2<\cdots$
	such that:
	\begin{itemize}
		\item $1< \Dimh [S_{\vartheta_i} (\mathscr{P}(\beta_j) ) ]<2$ a.s.\ for
			all $i,j\ge 1$; and
		\item If there exist $i,j,k,l\ge 1$ such that
			$\Dimh [S_{\vartheta_i} (\mathscr{P}(\beta_j) ) ]
			= \Dimh [S_{\vartheta_k} (\mathscr{P}(\beta_l) ) ]$ with positive probability,
			then $i=k$ and $j=l$.
	\end{itemize}
\end{corollary}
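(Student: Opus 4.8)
The plan is to deduce Corollary~\ref{co:SHE} from Theorem~\ref{th:SHE}, using all of the freedom in the choice of the two sequences to secure the second bullet point. Throughout, write $\Psi(\beta\,,\vartheta):=\Dimh[S_\vartheta(\mathscr{P}(\beta))]$. Fix once and for all numbers $B>b$ and $\Theta>0$ with $B^{3/2}\Theta<\min\{\varepsilon\,,1/A\}$. Every pair $(\beta\,,\vartheta)$ with $b<\beta<B$ and $0<\vartheta<\Theta$ then satisfies the hypotheses of Theorem~\ref{th:SHE} and, in addition, $2-A\beta^{3/2}\vartheta>1$, so that $1<\Psi(\beta\,,\vartheta)<2$ almost surely. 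Hence \emph{any} strictly increasing sequences $\beta_1<\beta_2<\cdots$ in $(b\,,B)$ and $\vartheta_1<\vartheta_2<\cdots$ in $(0\,,\Theta)$ already deliver the first bullet point, and the problem is to pick them so that also $\P\{\Psi(\beta_j\,,\vartheta_i)=\Psi(\beta_l\,,\vartheta_k)\}=0$ whenever $(i\,,j)\neq(k\,,l)$.

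I would reduce this to a deterministic statement by a zero--one law: since the macroscopic Hausdorff dimension of a subset of $\R^2$ is translation invariant and spatial translations act on the driving white noise in a measure-preserving, ergodic fashion, $\Psi(\beta\,,\vartheta)$ is almost surely equal to a non-random number $\psi(\beta\,,\vartheta)$. On the box just fixed, Theorem~\ref{th:SHE} reads $2-A\beta^{3/2}\vartheta\le\psi(\beta\,,\vartheta)\le 2-a\beta^{3/2}\vartheta$; and since $\mathscr{P}(\beta')\subseteq\mathscr{P}(\beta)$ for $\beta'\ge\beta$ while $S_\vartheta$ is a homeomorphism, $\beta\mapsto\psi(\beta\,,\vartheta)$ is non-increasing for each $\vartheta$. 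These two facts force every maximal interval on which $\psi(\cdot\,,\vartheta)$ is constant to have multiplicative length at most $(A/a)^{2/3}$, and confine every level set $\{\psi=c\}$ to a thin curvilinear band on which the product $\beta^{3/2}\vartheta$ varies by at most the factor $A/a$. The second bullet point now amounts to the purely deterministic assertion that the map $(i\,,j)\mapsto\psi(\beta_j\,,\vartheta_i)$ is injective.

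To build sequences realizing that, suppose first that $\psi$ is continuous and strictly decreasing in each of its two arguments. Then strict monotonicity handles every pair $(i\,,j)\neq(k\,,l)$ except the ``anti-diagonal'' ones, for which (say) $j<l$ but $i>k$; and for a fixed anti-diagonal quadruple of indices, the set of sequences on which $\psi(\beta_j\,,\vartheta_i)=\psi(\beta_l\,,\vartheta_k)$ is, by injectivity of $\psi(\cdot\,,\vartheta_k)$, cut out by a single equation in the coordinate $\beta_l$, hence is negligible in the space of admissible sequences equipped with a natural full-support product measure. Almost every choice of $\beta_\bullet$ and $\vartheta_\bullet$ avoids the resulting countable union of null sets, so admissible sequences with the required property exist. (Equivalently, one may argue by a recursion that enlarges the grid $\{\beta_j\}\times\{\vartheta_i\}$ by one row and one column at a time while keeping $\psi$ injective on it, each new coordinate only needing to steer clear of finitely many short forbidden intervals coming from the level sets already present.)

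The main obstacle is precisely the regularity of $\psi$ used in the previous paragraph. Theorem~\ref{th:SHE} together with monotonicity bounds the multiplicative lengths of the flat pieces and level sets of $\psi$, but does \emph{not} by itself rule out that $\psi(\cdot\,,\vartheta)$ is a step function with finitely many steps---in which case no infinite increasing sequence $\beta_1<\beta_2<\cdots$ inside $(b\,,B)$ could keep $j\mapsto\psi(\beta_j\,,\vartheta)$ injective. One must therefore establish, by revisiting the estimates that enter the proof of Theorem~\ref{th:SHE} (or via an independent argument), that $\psi$ assumes infinitely many values along every horizontal and vertical line---ideally, that it is continuous and strictly monotone in each variable---and this is where the real work is. Making the zero--one law of the second paragraph fully rigorous, in particular for a non-constant initial datum $u_0$, is the only other point needing care, and it is of a by-now standard type.
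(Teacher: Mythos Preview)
Your route is considerably more elaborate than the paper's, and the extra machinery does not close the gap you yourself identify. The paper never invokes a zero--one law and never asks whether the dimension is deterministic, continuous, or strictly monotone in $(\beta,\vartheta)$. It uses only that, almost surely, $\Dimh[S_\vartheta(\mathscr{P}(\beta))]$ lies in the \emph{deterministic} interval $[2-A\beta^{3/2}\vartheta,\,2-a\beta^{3/2}\vartheta]$ furnished by Theorem~\ref{th:SHE}; two such intervals are disjoint once the corresponding products $\beta^{3/2}\vartheta$ differ by more than the factor $A/a$, and then the (possibly random) dimensions are almost surely unequal with no appeal to ergodicity. The construction is a simple recursion: take $\vartheta_1$ small, and at each step choose $\beta_{n+1}>\beta_n$ close enough to $\beta_n$ that $\vartheta_n<\varepsilon\beta_{n+1}^{-3/2}$, together with $\vartheta_{n+1}\in(\vartheta_n,\varepsilon\beta_{n+1}^{-3/2})$ satisfying $(\beta_{n+1}/\beta_n)^{3/2}(\vartheta_{n+1}/\vartheta_n)>A/a$. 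This forces $n\mapsto\Dimh[S_{\vartheta_n}(\mathscr{P}(\beta_n))]$ to be strictly decreasing, and the paper declares the corollary proved at that point.

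Your suspicion about the full second bullet is, however, well placed. The paper's written argument separates only the \emph{diagonal} entries $\Psi(\beta_n,\vartheta_n)$ and does not treat general off-diagonal pairs $(i,j)\ne(k,l)$. Interval disjointness alone cannot do so: the first bullet forces $\sup_j\beta_j<\infty$ (otherwise $\vartheta_i<\varepsilon\beta_j^{-3/2}\to0$ for every $i$) and hence also $\sup_i\vartheta_i<\infty$, so the infinitely many products $\beta_j^{3/2}\vartheta_i$ lie in a bounded interval and cannot all be pairwise $(A/a)$-separated. Establishing injectivity over the full grid genuinely requires finer information about the dimension function than Theorem~\ref{th:SHE} provides---precisely the regularity you flag as ``where the real work is''---and neither your proposal nor the paper's proof supplies it.
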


Corollary \ref{co:SHE} shows, in particular, that
there exist infinitely-many different stretch factors $\vartheta_1,\vartheta_2,\ldots$
and infinitely-many different length scales $\beta_1,\beta_2,\ldots$ such that
for every $i\ge 1$, the $\vartheta_i$-stretching of peaks of height
$t\mapsto\exp\{\beta_j t\}$ [$j=1,2,\ldots$]
all have distinct and nontrivial macroscopic Hausdorff dimension. This means that
every $\vartheta_i$-stretching of the tall peaks of $u$ is macroscopically
multifractal. Moreover, the
said Hausdorff dimensions themselves are distinct as we vary the stretch factors.
One can interpret this finding as follows: Under the intermittency condition \eqref{cond:interm},
the tall peaks  of the solution to \eqref{SHE} form different multifractals on infinitely-many
different stretch scales. We believe that the quantitative statement of Theorem \ref{th:SHE}
and its proof are novel. However, the  idea that the peaks of
$u$ should form very complex macroscopic space-time multifractals has been argued
much earlier in the literature. We learned that idea from
an insightful paper by Gibbon and Doering \cite{GD} on the role of intermittency in turbulence.
And a paper by Zimmerman et al \cite{Zimm} discusses this sort of complex
multifractal behavior in the context of the
closely-related stochastic Allen--Cahn equation with multiplicative forcing.

In a recent paper \cite{KKX} we have established that,
at each fixed time $t>0$, the solution to \eqref{SHE} under \eqref{cond:interm}
and $\sigma\equiv 1$ are both multifractal. This is a somewhat
counterintuitive statement because
the solution to \eqref{SHE} is intermittent under \eqref{cond:interm}---this is close to the
KPZ universality class \cite{ACQ,Corwin,Kardar,KPZ}---and is
non intermittent when $\sigma\equiv1$---this
is close to the Edwards-Wilkinson universality class \cite{ACQ,Corwin,EW}.
We shall see in \S\ref{sec:non:interm} below [see Theorem \ref{th:lin}]
that,
as can be determined by Hausdorff dimension considerations alone,
the spatio-temporal peaks of the constant-$\sigma$ case are significantly smaller
than the spatio-temporal peaks of the case \eqref{cond:interm}, though both
models have infinitely-many different natural length scales and stretch factors.

\subsection{An outline of the proof of Theorem \ref{th:SHE}}

The proof of Theorem \ref{th:SHE} hinges on a blend of probabilistic,
analytic, and geometric ideas, many of which we believe are novel.
It also relies on various probability estimates of our earlier paper
\cite{KKX}, which we will recall in due time.

In the remainder of this introduction we
outline the intuition behind the proof of Theorem \ref{th:SHE}, though
the proper
proof itself contains a number of additional technical hurdles that
will need to be circumvented.

First, we present the following elegant geometric
result, whose proof appears in the next section of the paper.

\begin{proposition}\label{pr:Epi}
	Suppose $f:(0\,,\infty)\to(0\,,\infty)$ is a strictly increasing convex function,
	and recall that
	the \emph{epigraph} of $f$ in $(0\,,\infty)^2$ is the planar set,
	\[
		\text{\rm Epi}[f] := \left\{ (x\,,y)\in(0\,,\infty)^2:\ y\ge f(x)\right\}.
	\]
	If
	\begin{equation}\label{liminf:f}
		\liminf_{x\to\infty}\,\frac{f(x)}{x}>1,
	\end{equation}
	then
	\begin{equation}\label{dimtest}
		\Dimh\left( \text{\rm Epi}[f]\right) =
		1 +\inf \left\{0<\alpha\leq 1:\
		\int_1^\infty \left( \frac{f^{-1}(x)}{x}\right)^\alpha\frac{\d x}{x}<\infty\right\},
	\end{equation}
	where $\inf\varnothing:=1$.
\end{proposition}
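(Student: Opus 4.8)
\medskip\noindent\emph{Proof strategy.}
I would work directly from the Barlow--Taylor description of $\Dimh$ recalled in \S\ref{sec:Dim}: writing $\mathcal{S}_n$ for the $n$-th dyadic shell and $\nu^n_\rho(\cdot)$ for the associated $\rho$-dimensional shell content at scale $2^n$ (essentially the infimum of $\sum_i(\mathrm{side}(Q_i)/2^n)^\rho$ over coverings of $E\cap\mathcal{S}_n$ by cubes of side between $1$ and $2^n$), one has $\Dimh(E)=\inf\{\rho>0:\ \sum_{n\ge 1}\nu^n_\rho(E)<\infty\}$, so the whole proof reduces to computing $\nu^n_\rho(\text{\rm Epi}[f])$ up to multiplicative constants. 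The heart of the matter is a \emph{geometric sandwich}, valid for all large $n$, that pinches $\text{\rm Epi}[f]\cap\mathcal{S}_n$ between two comparable ``tall, thin boxes.'' Since $f$ is increasing, any $(x,y)\in\text{\rm Epi}[f]$ with $y<2^n$ satisfies $f(x)\le y<2^n$, hence $x<f^{-1}(2^n)=:w_n$, giving $\text{\rm Epi}[f]\cap\mathcal{S}_n\subseteq(0,w_n)\times(0,2^n)$; conversely the box $[1,f^{-1}(2^{n-1}))\times[2^{n-1},2^n)$ is contained in $\text{\rm Epi}[f]\cap\mathcal{S}_n$, because there $y\ge 2^{n-1}>f(x)$ and, by \eqref{liminf:f}, $f^{-1}(2^{n-1})<2^{n-1}$, so the box sits inside the shell rather than its central hole. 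Convexity of $f$ makes $f^{-1}$ concave, hence doubling --- $f^{-1}(2t)\le 2f^{-1}(t)$ --- so $f^{-1}(2^{n-1})\asymp w_n$, and both boxes have $x$-extent $\asymp w_n$ and $y$-extent $\asymp 2^n$, with $w_n\le c\,2^n$ for some fixed $c<1$, again by \eqref{liminf:f}.

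Next I would estimate the content of such a tall box. For the upper bound, cover the outer box by $\asymp 2^n/w_n$ axis-parallel cubes of side $\asymp w_n$ stacked in the $y$-direction, which costs $\asymp(2^n/w_n)(w_n/2^n)^\rho=(w_n/2^n)^{\rho-1}$; hence $\nu^n_\rho(\text{\rm Epi}[f])\lesssim(w_n/2^n)^{\rho-1}$ for $\rho>1$. For the matching lower bound I would run a Frostman-type mass-distribution argument: let $\mu$ be Lebesgue measure restricted to the inner box, so that $\mu(\text{box})\asymp w_n2^n$, and check that every cube $Q$ of side $\ell\in[1,2^n]$ obeys $\mu(Q)\le\min(\ell,w_n)\,\ell\le w_n^{2-\rho}\ell^\rho$ when $1<\rho<2$ (the extremal case being $\ell\asymp w_n$); summing this over any admissible covering of $\text{\rm Epi}[f]\cap\mathcal{S}_n$ gives $\nu^n_\rho(\text{\rm Epi}[f])\gtrsim(w_n/2^n)^{\rho-1}$. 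Thus $\nu^n_\rho(\text{\rm Epi}[f])\asymp(f^{-1}(2^n)/2^n)^{\rho-1}$ for $1<\rho<2$ and all large $n$.

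Finally, set $\alpha=\rho-1$. The same doubling bound gives $f^{-1}(x)/x\asymp f^{-1}(2^n)/2^n$ throughout $x\in[2^n,2^{n+1}]$, whence $\sum_n(f^{-1}(2^n)/2^n)^\alpha\asymp\int_1^\infty(f^{-1}(x)/x)^\alpha\,\d x/x$; so for $1<\rho<2$ the series $\sum_n\nu^n_\rho(\text{\rm Epi}[f])$ converges precisely when the integral in \eqref{dimtest} with exponent $\alpha=\rho-1$ is finite, and since the integrand is eventually $<1$, that integral is nonincreasing in $\alpha$, so these $\rho$ form an interval unbounded above. The two remaining ranges are easy: $\text{\rm Epi}[f]$ contains a vertical ray, which has macroscopic dimension $1$, so $\Dimh(\text{\rm Epi}[f])\ge 1$ and the series diverges for $\rho\le 1$; and $\text{\rm Epi}[f]$ is a subset of the plane, so $\Dimh(\text{\rm Epi}[f])\le 2$ and the series converges for $\rho>2$. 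Stitching the three ranges together yields $\Dimh(\text{\rm Epi}[f])=1+\inf\{\alpha\in(0,1]:\ \int_1^\infty(f^{-1}(x)/x)^\alpha\,\d x/x<\infty\}$ with $\inf\varnothing=1$, which is \eqref{dimtest}. The only steps I expect to require real care are the sandwich --- specifically, confirming that the inner boxes genuinely lie in the shells, which is exactly what \eqref{liminf:f} together with the convexity-induced doubling buy --- and the uniformity of the mass-distribution estimate over all admissible cube sizes and over $\rho$ bounded away from $1$ and $2$; the covering arithmetic and the sum--integral comparison are routine.
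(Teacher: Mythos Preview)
Your proposal is correct and follows the same overall architecture as the paper: sandwich $\text{\rm Epi}[f]\cap\cS_n$ between boxes of width $\asymp f^{-1}(\text{base}^n)$ and height $\asymp\text{base}^n$, tile the outer box with stacked squares of side $\asymp f^{-1}(\text{base}^n)$ for the upper bound, invoke the Barlow--Taylor density theorem with planar Lebesgue measure for the lower bound, and then pass from the shell sum to the integral in \eqref{dimtest}. The one substantive difference is in how convexity is used to manufacture the inner set for the lower bound. The paper observes that the chord joining $(f^{-1}(\e^n),\e^n)$ to $(f^{-1}(\e^{n+1}),\e^{n+1})$ lies below the graph of $f$, so the \emph{triangle} $A_n^u$ above that chord sits inside $\text{\rm Epi}[f]$; a mid-axial symmetry argument then upgrades this triangle to the full rectangle $A_n=(0,f^{-1}(\e^{n+1})]\times(\e^n,\e^{n+1}]$ for dimension purposes. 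You instead exploit convexity through the concavity of $f^{-1}$ to get the doubling bound $f^{-1}(2t)\le(2+o(1))f^{-1}(t)$, which shows directly that the \emph{rectangle} $[1,f^{-1}(2^{n-1}))\times[2^{n-1},2^n)$---already contained in $\text{\rm Epi}[f]$ by construction---has width comparable to $f^{-1}(2^n)$. Your route avoids the symmetry trick and recycles the same doubling estimate for the sum--integral comparison (where the paper invokes Cauchy's test); the paper's route keeps the geometry more explicit and records, as you do not, that convexity is unneeded for the upper bound.
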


Proposition \ref{pr:Epi}, and its proof, together will imply the following.

\begin{corollary}\label{co:Epi}
	Let $\log_+ y:=\log(y\vee \e)$ for all $y\ge0$,
	and for all $p>1$ and $q> 0$ define
	\[
		\mathcal{E}_p:= \left\{ (x\,,y)\in(0\,,\infty)^2:\, y\ge x(\log_+ x)^p \right\}
		\quad \text{and} \quad \widetilde{\mathcal{E}_q}:=
		\left\{ (x\,,y)\in(0\,,\infty)^2:\, y\ge x^q \right\}.
	\]
	Then,
	\[
		\Dimh(\mathcal{E}_p)=1+ p^{-1}
		\quad\text{and}\quad
		\Dimh(\widetilde{\mathcal{E}_q})=\begin{cases}
			2&\text{if $0< q\le1$},\\
			1&\text{if $q>1$}.
		\end{cases}
	\]
\end{corollary}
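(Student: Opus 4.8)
The plan is to deduce everything from Proposition~\ref{pr:Epi} by realising each of the two families of sets as the epigraph of an explicit strictly increasing convex function on $(0\,,\infty)$ (or, in one borderline case, by trapping it between two such epigraphs) and then evaluating the integral criterion~\eqref{dimtest}.

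For $\mathcal{E}_p$ one observes that $\mathcal{E}_p=\text{\rm Epi}[f_p]$, where $f_p(x):=x(\log_+x)^p$; thus $f_p(x)=x$ on $(0\,,\e]$ and $f_p(x)=x(\log x)^p$ on $(\e\,,\infty)$. First I would check the hypotheses of Proposition~\ref{pr:Epi}: $f_p$ is continuous and strictly increasing, it maps $(0\,,\infty)$ onto $(0\,,\infty)$, it is convex on each of its two pieces, and its left derivative at $\e$ (namely $1$) does not exceed its right derivative there (namely $1+p$), so $f_p$ is convex on all of $(0\,,\infty)$; moreover $f_p(x)/x=(\log_+x)^p\to\infty$, so \eqref{liminf:f} holds. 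A short computation with $y=f_p^{-1}(x)$, i.e.\ $x=y(\log y)^p$, gives $\log y=\log x-p\log\log y=(1+o(1))\log x$ and hence $f_p^{-1}(x)=(1+o(1))\,x(\log x)^{-p}$ as $x\to\infty$; consequently $(f_p^{-1}(x)/x)^\alpha\big/x$ is, up to bounded factors and for large $x$, equal to $(\log x)^{-p\alpha}/x$, and the substitution $u=\log x$ shows that $\int_1^\infty(f_p^{-1}(x)/x)^\alpha\,\d x/x<\infty$ if and only if $p\alpha>1$. Since $p>1$, the infimum in \eqref{dimtest} is $1/p$, whence $\Dimh(\mathcal{E}_p)=1+p^{-1}$.

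For $\widetilde{\mathcal{E}_q}$ with $q>1$ the same recipe applies to $g_q(x):=x^q$, which is strictly increasing and convex on $(0\,,\infty)$ with $g_q(x)/x=x^{q-1}\to\infty$. Here $g_q^{-1}(x)=x^{1/q}$, so $(g_q^{-1}(x)/x)^\alpha\big/x=x^{-\alpha(q-1)/q-1}$, which is integrable on $(1\,,\infty)$ for \emph{every} $\alpha\in(0\,,1]$; hence the infimum in \eqref{dimtest} is $0$ and $\Dimh(\widetilde{\mathcal{E}_q})=1$. The one place where care is needed is $0<q\le1$: then $g_q(x)/x=x^{q-1}$ fails to have $\liminf>1$, so Proposition~\ref{pr:Epi} does not apply to $g_q$ directly. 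I would instead sandwich: fix any $c>1$ and put $d:=1+\sup_{x>0}(x^q-cx)$, which is finite (the map $x\mapsto x^q-cx$ tends to $0$ as $x\to0^+$ and to $-\infty$ as $x\to\infty$) and is $\ge1$, so $cx+d\ge x^q+1>x^q$ for all $x>0$ and therefore $\text{\rm Epi}[cx+d]\subseteq\widetilde{\mathcal{E}_q}$. The affine function $x\mapsto cx+d$ is strictly increasing, convex, maps $(0\,,\infty)$ into $(0\,,\infty)$, and satisfies \eqref{liminf:f} since $c>1$; its inverse $(x-d)/c$ obeys $(x-d)/(cx)\to1/c>0$, so the integral in \eqref{dimtest} diverges for every $\alpha\in(0\,,1]$ and the convention $\inf\varnothing=1$ gives $\Dimh(\text{\rm Epi}[cx+d])=2$. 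Monotonicity of the macroscopic Hausdorff dimension then forces $\Dimh(\widetilde{\mathcal{E}_q})\ge2$, and the reverse inequality holds because macroscopic Hausdorff dimension in $\R^2$ never exceeds $2$; hence $\Dimh(\widetilde{\mathcal{E}_q})=2$.

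The only computational content consists of routine verifications: convexity of $f_p$ across its kink at $\e$, the asymptotics of $f_p^{-1}$, the convergence test for $\int_1^\infty u^{-p\alpha}\,\d u$, and the elementary bound $\sup_{x>0}(x^q-cx)<\infty$. I expect the sole conceptual obstacle to be the borderline regime $q\le1$, where hypothesis \eqref{liminf:f} of Proposition~\ref{pr:Epi} breaks down and one must pass to a dominated affine epigraph as above (or, alternatively, rerun the covering-box construction from the proof of Proposition~\ref{pr:Epi} directly on $\widetilde{\mathcal{E}_q}$, which is why the statement says ``Proposition~\ref{pr:Epi}, and its proof'').
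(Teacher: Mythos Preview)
Your argument is correct. For $\mathcal{E}_p$ and for $\widetilde{\mathcal{E}_q}$ with $q>1$ you proceed exactly as the paper does, by applying Proposition~\ref{pr:Epi} to $f_p(x)=x(\log_+x)^p$ and $g_q(x)=x^q$ respectively, and evaluating the integral test~\eqref{dimtest}.

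The genuine difference is in the case $0<q\le1$. The paper does \emph{not} invoke Proposition~\ref{pr:Epi} there; instead it observes that for all large $n$ the shell slice $\widetilde{\mathcal{E}_q}\cap\cS_n$ contains the triangle $\mathcal{F}_n=\{(x,y):\e^n<x\le y\le\e^{n+1}\}$, and then reruns the density argument from the \emph{proof} of Proposition~\ref{pr:Epi}: a symmetry trick doubles $\cup_n\mathcal{F}_n$ to the full corner squares $\cup_n(\e^n,\e^{n+1}]^2$, and the Barlow--Taylor density theorem applied with planar Lebesgue measure gives dimension $\ge2$. This is why the statement of the corollary refers to ``Proposition~\ref{pr:Epi}, \emph{and its proof}.'' Your route is different and cleaner as a black-box argument: you trap $\text{\rm Epi}[cx+d]\subseteq\widetilde{\mathcal{E}_q}$ for an affine $cx+d$ with $c>1$, which \emph{does} satisfy~\eqref{liminf:f}, so Proposition~\ref{pr:Epi} applies directly and yields dimension $2$ because the integral in~\eqref{dimtest} diverges for every $\alpha\in(0,1]$. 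Your approach avoids reopening the proof of the proposition, at the small cost of a sandwiching step; the paper's approach is more self-contained but requires repeating the density machinery. One minor point: since $d\ge1$, the inverse $(x-d)/c$ is only defined for $x>d$, so the integral in~\eqref{dimtest} should be read as a tail-convergence criterion (equivalently, start the integral at any $X>d$); this does not affect the argument.
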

Choose and fix an arbitrary $q>1$, as close to one as one would like.
According to Corollary \ref{co:Epi},  $\Dimh(\widetilde{\mathcal{E}}_q)=1$;
this and elementary properties of macroscopic Hausdorff dimension readily
imply that
\[
	\Dimh \left( S_\vartheta(\mathscr{P}(\beta))\cap\widetilde{\mathcal{E}}_q\right)
	\le\Dimh ( \widetilde{\mathcal{E}}_q )=1
	\qquad\text{a.s.}
\]
One may notice next that Theorem \ref{th:SHE} asserts that
$\Dimh[S_\vartheta(\mathscr{P}(\beta))]\ge1$ a.s.\
over the range of $\vartheta$ and $\beta$ mentioned in the statement
of the theorem.  Thus,
it follows that all of the interesting fractal behavior of $S_\vartheta(\mathscr{P}(\beta))$
occurs off the infinite set $\widetilde{\mathcal{E}}_q$. In other words,
in order to understand the large-scale fractal structure of
$S_\vartheta(\mathscr{P}(\beta))$, it is necessary and sufficient to
understand the large-scale fractal structure of
the random set
\[
	S_\vartheta\left(\mathscr{P}(\beta))\cap
	\left(\R^2\setminus\widetilde{\mathcal{E}}_q\right)\right).
\]
A simple symmetry calculation reduces this problem to one about the analysis
of the random set
\[
	\left(\R_+\times\R_+\right)\cap S_\vartheta\left(\mathscr{P}(\beta))\cap
	\left(\R^2\setminus\widetilde{\mathcal{E}}_q\right)\right)
	=: \R^2_+\setminus\mathcal{R}_1.
\]
Figure \ref{fig:corner} includes a depiction of the restriction of
$\mathcal{R}_1$ to a large box $\cup_{i=1}^n\cS_i$,
where
\[
	\cS_n := \left[\e^n,\e^{n+1}\right]\times \left[\e^n,\e^{n+1}\right]
	\qquad[n\gg1].
\]
We might think of $\cS_n$ as the ``$n$th shell.''

The preceding discussion tells us
that---as far as the macroscopic structure of $S_\vartheta(\mathscr{P}(\beta))$
is concerned---nothing interesting happens in $\mathcal{R}_1$.

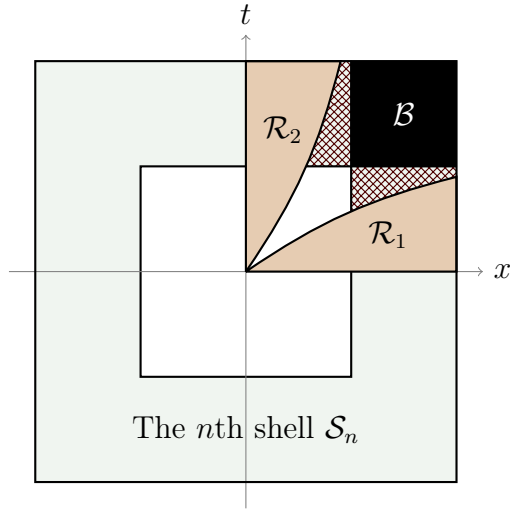
\begin{figure}[h!]\centering
	\begin{tikzpicture}[scale=.7,domain=-1.05:1.05]
		\filldraw[green!20!gray!10] (-4,-4) rectangle (4,4);
		\filldraw[white] (-2,-2) rectangle (2,2);
		\filldraw[pattern color = black!70!red,pattern=crosshatch ](2,0) rectangle (4,2);
		\draw[pattern color = black!70!red,pattern=crosshatch ](0,2) rectangle (2,4);
		\draw[thick,black] (-2,-2) rectangle (2,2);
		\filldraw[black](2,2) rectangle (4,4);
		\draw[->,gray] (-4.5,0) -- (4.5,0) node[right] {\textcolor{black}{$x$}};
		\draw[->,gray] (0,-4.5) -- (0,4.5) node[above] {\textcolor{black}{$t$}};
		\filldraw[thick,fill=brown!40,draw=black] (0,0) to[bend right=10] (1.8,4) to (0,4) to (0,0);
		\filldraw[thick,fill=brown!40,draw=black] (0,0) to[bend left=10](4,1.8) to (4,0) to (0,0);
		\draw[thick,black] (-4,-4) rectangle (4,4);
		\draw[thick] (2,2) -- (2,4);
		\draw[thick] (2,2) -- (4,2);
		\node at (0,-3) {The $n$th shell $\cS_n$};
		\node at (.7,2.7) {$\mathcal{R}_2$};
		\node at (2.7,.7) {$\mathcal{R}_1$};
		\node[color=white] at (3,3) {$\mathcal{B}$};
	\end{tikzpicture}
	\caption{Searching for elements of $S_\vartheta(\mathscr{P}(\beta))$
		in the top-right quadrant in the $n$th shell $\cS_n$.}
	\label{fig:corner}
\end{figure}
Let $\mathcal{R}_2$ denote the symmetric reflection of  $\mathcal{R}_1$
about the diagonal of $\R^2_+$; see Figure \ref{fig:corner}. A second
symmetry calculation shows that nothing interesting happens in $\mathcal{R}_2$.
Thus, it follows that all of the interesting large-scale fractal structure of
$S_\vartheta(\mathscr{P}(\beta))$ is contained in the part of
$\R^2_+$ that is sandwiched between $\mathcal{R}_2$ and $\mathcal{R}_1$;
that is, to $\R^2_+\setminus(\mathcal{R}_2\cup\mathcal{R}_1)$.

Next, a covering argument can be devised to reduce the domain of interest
from the relatively complicated infinite set
$\R^2_+\setminus(\mathcal{R}_2\cup\mathcal{R}_1)$ to the much simpler infinite
set
\[
	\mathcal{B} := \bigcup_{n=1}^\infty \left(\left[\e^n,\e^{n+1}\right]\times
	\left[\e^n,\e^{n+1}\right]\right) \subsetneq
	\R^2_+\setminus(\mathcal{R}_2\cup\mathcal{R}_1).
\]
The restriction of the unbounded set $\mathcal{B}$ to the $n$th shell
appears in Figure \ref{fig:corner} as a black upright
square. In this way, we see that the proof of Theorem \ref{th:SHE}
is reduced to proving that
\begin{equation}\label{eq:reduced}
	 \max\left\{1\,,2-  A\beta^{3/2}\vartheta\right\}\le
	 \Dimh\left[S_\vartheta\left(\mathscr{P}(\beta)\right)\cap
	 \mathcal{B}\right]
	\le\max\left\{ 1\,,2-a\beta^{3/2}\vartheta\right\}
	\qquad\text{a.s.,}
\end{equation}
for all $\beta$ sufficiently large and $\vartheta$ sufficiently small.
This reduction is significant because every point
$(x\,,t)$ in the restriction of $\mathcal{B}$ to the $n$th shell
has the property that $\e^{-1} \le x/y \le \e$. That is,
the spatial behavior and the temporal behavior of $u$ in $
\mathcal{B}$ are, in some sense, comparable. This property turns out
to, in some sense, help ``homogenize'' our problem on large scales.

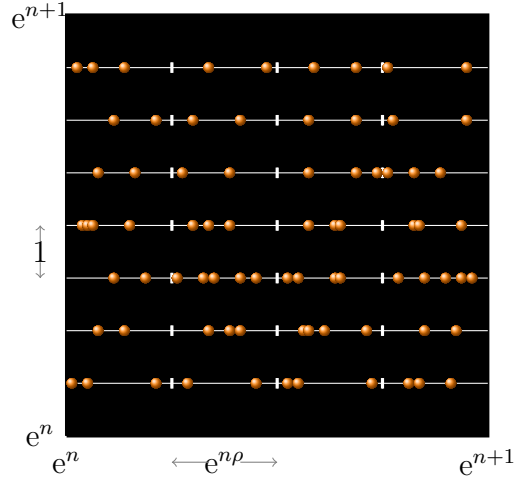
\begin{figure}[h!]\centering
	\begin{tikzpicture}[scale=.7,domain=-1.05:1.05]
		\filldraw[very thick] (0,0) to (8,0) to (8,8) to (0,8) to (0,0);
		\draw[thin,color=white](0,1) to (8,1);
		\draw[thin,color=white](0,2) to (8,2);
		\draw[thin,color=white](0,3) to (8,3);
		\draw[thin,color=white](0,4) to (8,4);
		\draw[thin,color=white](0,5) to (8,5);
		\draw[thin,color=white](0,6) to (8,6);
		\draw[thin,color=white](0,7) to (8,7);
		
		\foreach \y in {1,2,3,4,5,6,7}
			\foreach \x in {2,4,6}
			\draw[thick,color=white](\x-0.01,\y-0.1) to (\x-0.01,\y+0.1);
		\foreach \y in {1,2,3,4,5,6,7}
			\foreach \x in {2,4,6}
			\draw[thick,color=white](\x+0.01,\y-0.1) to (\x+0.01,\y+0.1);
			
		\node at (0,-0.5) {$\e^n$};\node at (8,-0.5) {$\e^{n+1}$};
		\node at (-0.5,0) {$\e^n$};\node at (-0.5,8) {$\e^{n+1}$};
		\node at (-0.5,3.5) {$1$};
		\draw[->,gray] (-0.5,3.7) to (-0.5,4);
		\draw[->,gray] (-0.5,3.3) to (-0.5,3);
		\node at (3,-0.5) {$\e^{n\rho}$};
		\draw[->,gray] (3.3,-0.5) to (4,-0.5);
		\draw[->,gray] (2.7,-0.5) to (2,-0.5);

		\foreach \x in {0.1,0.4,1.7,2.3,3.6,4.2,4.4,5.8,6.5,6.7,7.3}
			\shade[ball color=orange] (\x,1) circle (.1);
		\foreach \x in {0.6,1.1,2.7,3.1,3.3,4.5,4.6,4.9,5.7,6.8,7.4}
			\shade[ball color=orange] (\x,2) circle (.1);
		\foreach \x in {0.9,1.5,2.1,2.6,2.8,3.3,3.6,4.2,4.4,5.1,5.2,6.3,6.8,7.2,7.5,7.7}
			\shade[ball color=orange] (\x,3) circle (.1);
		\foreach \x in {0.3,0.4,0.5,1.2,2.4,2.7,3.1,4.6,5.1,5.2,6.6,6.7,7.5}
			\shade[ball color=orange] (\x,4) circle (.1);
		\foreach \x in {0.6,1.3,2.2,3.1,4.6,5.5,5.9,6.1,6.6,7.1}
			\shade[ball color=orange] (\x,5) circle (.1);
		\foreach \x in {0.9,1.7,2.4,3.3,4.6,5.5,6.2,7.6}
			\shade[ball color=orange] (\x,6) circle (.1);
		\foreach \x in {0.2,0.5,1.1,2.7,3.8,4.7,5.5,6.1,7.6}
			\shade[ball color=orange] (\x,7) circle (.1);
	\end{tikzpicture}
	\caption{The distribution of tall peaks in $\mathcal{B}$. Circles
		represent tall peaks.}
	\label{fig:peaks}
\end{figure}
In order to prove \eqref{eq:reduced}, we now enlarge our
view of $\mathcal{B}$---see Figure \ref{fig:peaks}---and
do a multiscale analysis in $\mathcal{B}$.

For every large integer $n\gg1$, let us subdivide,
in the $t$ direction, the box $\mathcal{B}\cap[\e^n,\e^{n+1}]$
using equally-spaced lines $L_1,L_2,\ldots$ that are one unit apart
[in the $t$, or vertical,  direction]. It turns out
that there exist  two numbers
$\rho_L=\rho_L(\vartheta\,,\beta) \in(0\,,1)$ and 
$\rho_U=\rho_U(\vartheta\,,\beta) \in(0\,,1)$  with $\rho_L\leq \rho_U$ such that
 the following happens almost surely for all $n$ large: 
\begin{compactenum}
	\item For every $\rho\in(\rho_U\,,1)$, the equipartition
		of every $L_j$ into subintervals of length $\exp(n\varrho)$
		has the property that all of the said subintervals contain
		at least one point where the peak of $u$ is of height $\ge\exp\{\beta t\};$ whereas
	\item For every $\rho\in(0\,,\rho_L)$, none of the mentioned
		subintervals correspond to  a peak of height $\ge\exp\{\beta t\}$.
\end{compactenum}
In other words, $S_\vartheta(\mathscr{P}(\beta))\cap\mathcal{B}$
behaves, on large scales, as a ``random self-similar fractal.'' It is easy to
compute the macroscopic Hausdorff dimension of a self-similar
fractal; a variation on that calculation then yields \eqref{eq:reduced}.\\

\subsection{A brief outline of the paper}
Let us conclude the Introduction by describing briefly the structure of the paper.

In \S2 we recall some basic facts about the Barlow--Taylor theory of
macroscopic fractals, macroscopic Hausdorff dimension, etc. \cite{BT1989,BT1992}.
Proposition \ref{pr:Epi} and Corollary \ref{co:Epi} are also proved in \S2.

Section 3 is dedicated to the proof of Theorem \ref{th:SHE} and its
Corollary \ref{co:SHE}. The results of this section include large-deviations
probability bounds, localization estimates, and bounds on a so-called
spatial correlation length of the solution to \eqref{SHE}. It is generally
believed that the solution to \eqref{SHE} spatially decorrelates at length scale
$\Theta(t^{3/2})$  when $t\gg1$.\footnote{Recall that $f(t)=\Theta(g(t))$
means that there exists $a>1$ such that
$a^{-1} g(t)\le f(t) \le a g(t)$ for all sufficiently large $t$.}
We have not found a carefully-stated form
of this as a conjecture in print, but the fact is for example hinted at implicitly in
Corwin \cite{Corwin}, and is also believed to be true by many physicists.
Here, we prove that the said correlation length
is not more than $\Omega(t^2)$;\footnote{Recall that
$f(t)=\Omega(g(t))$ means that there exists a positive constant $a$ such that
$f(t)\ge ag(t)$ for all sufficiently large $t$.}
for a careful statement see Theorem \ref{th:localization}. This result is
the best-known bound to date on the correlation length of $x\mapsto
u(t\,,x)$ when $t\gg1$.

For purposes of comparison, we derive in \S4 an analogue of Theorem \ref{th:SHE}
that holds for the solution to \eqref{SHE} in the case that $\sigma$ is constant.
The main theorem of that section is Theorem \ref{th:lin} which implies that,
when $\sigma$ is a constant,
the exceptionally-tall spatio-temporal peaks of the solution to \eqref{SHE}
are much smaller than when for example $\sigma(u)=u$. But the complex
multifractal structure of the peaks continues to pervade.

At the end of the paper we have taken care to collect a list of
many of the constants that appear within proofs, particularly those
of Theorem \ref{th:SHE}. It turns out that one has to be very careful
in some cases in order to make sure that various parameter dependencies
do not arise. In some cases, this is a truly non-trivial task; therefore,
we have taken care to outline  the important universal constants,
together with where they first arise, in Table \ref{table}
in an appendix that follows the bibliography. In this way one can use
Table \ref{table} in order to keep track of
the various parameter dependencies of interest.
	
\section{Macroscopic dimension}\label{sec:Dim}
Let us begin by recalling the Barlow--Taylor theory of macroscopic Hausdorff
dimension \cite{BT1989,BT1992}.

For every integer $k\ge0$, let
\begin{equation}\label{VS}
	\cV_k :=\left(-\e^k\,,\e^k\right]^2,
	\quad \cS_0:=\cV_0,\quad
	\cS_{k+1} := \cV_{k+1}\setminus\cV_k.
\end{equation}
Also, for every $n\in\Z$ let
$\mathscr{D}^{(n)}$ denote the collection of all \emph{$\e$-adic squares}
of the form
\begin{equation}\label{square}
	Q^{(n)}:=(i\e^n\,,(i+1)\e^n]\times (j\e^n\,,(j+1)\e^n],
\end{equation}
where $i,j\in\Z$ range over all integers.
If a square $Q^{(n)}$ has the form \eqref{square}, then
we say that $(i\e^n,j\e^n)$ is the \emph{southwest corner}
of $Q^{(n)}$, and $\e^n$ is the \emph{sidelength} of $Q^{(n)}$.
By $\mathscr{D}$ we mean the collection of
all $\e$-adic squares of $\R^2$; that is,
\[
	\mathscr{D}:=\bigcup_{n=-\infty}^\infty\mathscr{D}_n.
\]
A special role is played by
\[
	\mathscr{D}_{\ge 1}:=\bigcup_{n=0}^\infty
	\mathscr{D}_n.
\]
This is the collection of all $\e$-adic squares of sidelength
not smaller than $ 1$.

For every integer $k\ge 0$,
all $\rho\in (0\,,\infty)$, and each $A \subseteq \R^d$ define
\begin{equation}\label{nu}
	\nu^k_\rho(A) :=
	\min \sum_{i=1}^{m} \e^{\rho(\ell_i -k-1)},
\end{equation}
where the minimum is over all possible coverings of
$A \cap \cS_k$ by $\e$-adic  squares $Q_1,\ldots,Q_m\subseteq \cS_k$ of sidelength
$\e^{\ell_i} \ge 1$.
Note in particular that these squares are all elements of $\mathscr{D}_{\ge 1}$.

M. T. Barlow and S. J. Taylor \cite{BT1989,BT1992} defined the
\textit{macroscopic Hausdorff dimension} of a set $A\subseteq\R^2$ as
\[
	\Dimh(A) := \inf\left\{ \rho>0:\ \sum_{k=1}^\infty
	\nu^k_\rho(A)< \infty\right\}.
\]
The papers by Barlow and Taylor \cite{BT1989,BT1992} contain further
information about the macroscopic Hausdorff dimension $\Dimh$.
Among other things, the following result of Barlow and Taylor is noteworthy.

\begin{proposition}[Barlow and Taylor \cite{BT1992}]\label{pr:BT:Dim}
	Let $A\subseteq\R^2$ be a set.
	\begin{compactenum}
	\item Suppose we redefined $\nu^k_\rho(A)$ as in \eqref{nu},
		but where the minimum is over all possible coverings of
		$A \cap \cS_k$ by squares $Q_1,\ldots,Q_m\subseteq\cS_k$
		of the form $[x_1\,, x_1+r)\times [x_2\,, x_2+r)$,
		where $r\ge1$.
		Then this change does not alter the numerical value of $\Dimh(A)$.
	\item Choose and fix a real number $a>1$, and
		suppose we redefined $\{\cV_k\}_{k\ge0}$---and hence also $\{\cS_k\}_{k\ge0}$---%
		in \eqref{VS} as follows: $\cV_k :=\left(a^k\,,a^{k+1}\right]^2$
		for every integer $k\ge0$. Then, this change does not affect the numerical value
		of $\Dimh(A)$.
	\end{compactenum}
\end{proposition}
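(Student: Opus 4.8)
The plan is to derive both statements from a single robustness principle for $\Dimh$: in the range of exponents $\rho$ that matters for the definition of $\Dimh$, nearly optimal coverings consist only of squares that are \emph{small} relative to the shell containing them, and small squares are insensitive — up to $O(1)$ shifts of shell index and $O(1)$ multiplicative changes of weight — to the three perturbations at issue (replacing $\e$-adic squares by general squares, relaxing containment in a shell, and changing the base $\e$ of the shells). To implement this, for each $a>1$ let $D_a(A)$ be defined exactly as $\Dimh(A)$ is, except that: the shells are the square annuli $\{x:a^{k-1}<\|x\|_\infty\le a^k\}$ and a covering square of sidelength $a^\ell$ carries weight $a^{\rho(\ell-k-1)}$ in the $k$-th shell; the covering squares may be arbitrary half-open axis-parallel squares of sidelength $\ge1$; and each covering square is only required to \emph{meet} the relevant shell rather than be contained in it. Since enlarging the class of admissible coverings can only decrease each covering functional, the dimension appearing in part (1) of the proposition is squeezed between the standard $\Dimh(A)$ and $D_{\e}(A)$, while part (2) is the assertion that $\Dimh(A)=D_a(A)$ for every $a>1$. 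Hence it is enough to prove $\Dimh(A)=D_a(A)$ for all $a>1$.

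Fix $a>1$ and $\rho>D_a(A)$, so that the covering functionals defining $D_a$ are summable in $k$ and hence tend to $0$. Fix an integer $N=N(a)$ large enough that any square of sidelength $<a^{k-N}$ that meets the $k$-th $a$-shell is contained in the union of the $(k-1)$st, $k$th and $(k+1)$st $a$-shells; this is possible with $N$ depending on $a$ alone, because consecutive shells have inner and outer radii in the fixed ratio $a$. A square of sidelength $r$ meeting the $k$-th shell contributes at least $(r/a^{k+1})^\rho$ to its covering sum, so for all $k\ge k_0$ — with $k_0$ depending only on how fast the covering functionals tend to $0$, i.e.\ on $A,\rho,a$ — any covering of $A\cap\{a^{k-1}<\|x\|_\infty\le a^k\}$ of nearly minimal weight uses only squares of sidelength $<a^{k-N}$.

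Select such a covering $\mathcal C_k$ for every $k\ge k_0$, and replace each $Q\in\mathcal C_k$, of sidelength $r$, by the at most $C(a)$ genuine $\e$-adic squares of sidelength $\e^{\lceil\log r\rceil}$ required to cover it. Each of these $\e$-adic squares has sidelength comparable to $r$, lies within a multiple of $a^{k-N}$ (the multiple depending only on $a$) of the $k$-th $a$-shell, and — after possibly enlarging $N$, and using that the lines $\pm\e^{j-1},\pm\e^j$ bounding the standard shell $\cS_j$ belong to the $\e$-adic grid at every scale $\le\e^{j-1}$ — is contained in exactly one standard shell $\cS_j$, with $j$ within a bounded distance (depending on $a$) of $k\log a$; its weight in the shell-$j$ normalisation differs from the weight $(r/a^{k+1})^\rho$ of $Q$ by a factor bounded in terms of $a$ and $\rho$ alone. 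Gathering, for each standard shell $\cS_j$, the images of all squares coming from the finitely many relevant $\mathcal C_k$'s yields a legitimate covering of $A\cap\cS_j$ in the sense of \eqref{nu}; summing the weights over $j$, while noting that each $k$ is charged to only a bounded number of $j$'s, gives $\sum_j\nu^j_\rho(A)\le C'(a,\rho)\,S$, where $S$ is the (finite) sum over $k$ of the covering functionals used to define $D_a(A)$ at exponent $\rho$. Hence $\Dimh(A)\le\rho$; letting $\rho\downarrow D_a(A)$ gives $\Dimh(A)\le D_a(A)$. Running the same argument with the two bases $\e$ and $a$ interchanged — now starting from a nearly optimal standard covering, which already uses ordinary squares — gives the reverse inequality, so $\Dimh(A)=D_a(A)$.

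The one real difficulty is the bookkeeping imposed by the clause that covering squares be \emph{contained} in a shell: perturbing a square's shape, its grid, or the base of the shells generically destroys containment, which is why one must route through the ``meets the shell'' variant and must quantify how far a small square can drift from its shell. This is exactly the point at which the fixed geometric ratio between successive shells is essential, and it is also where care is needed to guarantee that the threshold exponent $N$, the shell-displacement bound, and every multiplicative constant depend only on the base and on $\rho$ — never on $A$ — since an $A$-dependent loss, while irrelevant for the abstract statement, would be fatal for the uniform estimates the proposition is used for later in the paper.
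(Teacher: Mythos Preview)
The paper does not supply its own proof of this proposition: it is stated with attribution to Barlow and Taylor \cite{BT1992} and used as a black box, so there is nothing in the paper to compare your argument against.

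Your overall strategy --- reduce both assertions to the single claim $\Dimh(A)=D_a(A)$, then exploit that for $\rho$ above the dimension the covering functionals tend to zero and hence nearly optimal coverings eventually use only squares that are small relative to their shell --- is the standard and correct one, and is essentially how Barlow and Taylor argue. There is, however, one genuine slip. You write that ``the lines $\pm\e^{j-1},\pm\e^j$ bounding the standard shell $\cS_j$ belong to the $\e$-adic grid at every scale $\le\e^{j-1}$.'' This is false: $\e^{j-1}/\e^{\ell}=\e^{j-1-\ell}$ is not an integer when $\ell<j-1$, so small $\e$-adic squares can and do straddle shell boundaries. (The claim \emph{would} be true in any integer base, which is what Barlow and Taylor actually use; the present paper's choice of base $\e$ breaks the alignment.) The fix is not hard --- a square of sidelength $\le\e^{j-1}$ that meets $\cS_j$ lies in $\cS_{j-1}\cup\cS_j\cup\cS_{j+1}$, and the portion of $A$ it covers in each of those three shells can be handled by at most a bounded number of $\e$-adic squares of comparable sidelength contained in the appropriate shell, at the cost of a further factor depending only on $\rho$ --- but as written your containment step does not go through, and you should replace the grid-alignment sentence with this boundary-splitting argument.
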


Having dispensed with an introduction
to macroscopic Hausdorff dimension, we establish Proposition \ref{pr:Epi} next.

\begin{proof}[Proof of Proposition \ref{pr:Epi}]
	We first prove the proposition under the more restrictive condition,
	\begin{equation}\label{liminf:f:e}
		\liminf_{x\to\infty}\,\frac{f(x)}{x}>\e.
	\end{equation}
	We then outline how to replace the preceding by the weaker condition \eqref{liminf:f}.
	
	As is commonly done in the local theory of dimension,
	one proceeds by first obtaining an upper bound and then a lower
	bound for $\Dimh(\text{\rm Epi}[f])$.
	We first consider the upper bound for
	$\Dimh( \text{\rm Epi}[f])$.
	
	Define
	\[
		f_n := f^{-1}(\e^{n+1})\qquad\text{for all integers $n\ge 0$}.
	\]
	Because  $f$ is strictly increasing, one can draw a picture---see Figure
	\ref{fig:range(f)}---in order to see that
	\[
		\text{\rm Epi}[f] \cap\cS_{n+1} \subset (0\,,f_n]
		\times\left(\e^n,\e^{n+1}\right]\qquad\text{for all integers $n\ge 0$}.
	\]
	Now, the condition \eqref{liminf:f:e} ensures that
	$f_n < \e^n$ for all sufficiently large integers $n\gg1$.
	Therefore,
	\[
		\text{\rm Epi}[f]\cap\cS_{n+1} \subseteq
		\bigcup_j R_{j,n},
	\]
	where the union is taken over all nonnegative integers
	$j\le \lfloor (\e^{n+1}-\e^n)/f_n \rfloor+1$,
	and
	\[
		R_{j,n} := ( 0\,,f_n]\times
		\left(\e^n + j f_n\,,\e^n + (j+1)f_n\right],
	\]
	for all integers $n,j\ge0$. Each $R_{j,n}$ is an upright square of
	side $f_n$. Therefore,
	\[
		\nu^n_\rho\left( \text{\rm Epi}[f]\right) \le
		\left(\left\lfloor\frac{\e^{n+1}-\e^n}{f_n}
		\right\rfloor +1\right)\cdot\left( \frac{f_n}{\e^n}\right)^\rho
		\le 2\e\left(\frac{f_n}{\e^n}\right)^{\rho-1},
	\]
	for all $\rho>0$ and $n\gg1$. It follows from this inequality that
	\[
		\Dimh\left(\text{\rm Epi}[f]\right) \le\inf\left\{\rho\ge 1:\
		\sum_{n=0}^\infty\left(\frac{f_n}{\e^n}\right)^{\rho-1}<\infty\right\}.
	\]
	Cauchy's test shows that $\sum_{n=0}^\infty (f_n/\e^n)^{\rho-1}$
	converges iff $\int_1^\infty[f^{-1}(x)/x]^{\rho-1} x^{-1}\d x$
	converges.  Therefore, the preceding display
	proves that $\Dimh(\text{\rm Epi}[f])$ is bounded from above by
	the expression on the right-hand side of \eqref{dimtest}. We would like to
	record the fact that  this
	part of the proof does not require $f$ to be convex.
	
	We now  derive a matching lower bound for
	$\Dimh(\text{\rm Epi}[f]) $. Define, for every integer $n\geq 0$,
	\begin{align*}
		A_n&:=(0\,, f_n]\times \left(\e^n, \e^{n+1}\right],\\
		A_n^u&:=\left\{(x\,,y)\in A_n:\  y>
			\frac{\e^{n+1}-\e^n}{f_n-f_{n-1}}(x-f_n)+\e^{n+1} \right\},\\
		A_n^l&:=\left\{(x\,,y)\in A_n:\  y\leq
			\frac{\e^{n+1}-\e^n}{f_n-f_{n-1}}(x-f_n)+\e^{n+1} \right\}.
	\end{align*}
	It might help to consider Figure \ref{fig:range(f)} at this point,
	keeping in mind that the rectangle $A_n$ is the disjoint union $A_n^u\cup A_n^l$.
	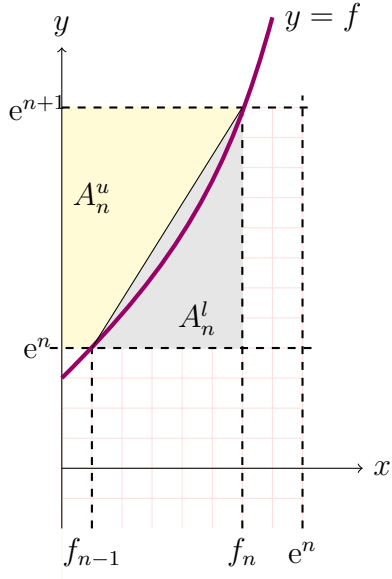
\begin{figure}[h!]\centering
	\begin{tikzpicture}[scale=.8,domain=-1.05:1.05]
		\draw[step=.5,pink!50,very thin] (0,1) grid (4,8);
		\filldraw[yellow!20](0,4) to (0.5,4) to (3,8) to (0,8) to (0,0);
		\filldraw[gray!20](0.5,4) to (3,4) to (3,8);
		\draw[->] (0,2) -- (5,2) node[right] {\textcolor{black}{$x$}};
		\draw[->] (0,1) -- (0,9) node[above] {\textcolor{black}{$y$}};
		\draw[thick,dashed] (4,1) to (4,8.2);\draw[thick,dashed] (-0.2,4) to (4.2,4);
			\node at (4,0.6){$\e^n$};\node at (-0.4,4){$\e^n$};
		\draw[thick,dashed] (0,8) to (4.2,8);\draw[thick,dashed] (3,1) to (3,8);
			\node at (-0.4,8){$\e^{n+1}$};\node at (3,0.6){$f_n$};
		\draw[thick,dashed] (0.5,1) to (0.5,4);\node at (0.5,0.6){$f_{n-1}$};
		\draw[ultra thick,blue!40!red!100](0,3.5) to [bend right=15](3.5,9.5)%
			node[right]{\textcolor{black}{$y=f$}};
		\node at (0.5,6.5){$A^u_n$};\node at (2.25,4.5){$A^l_n$};
		\draw[thin](0.5,4)to(3,8);
	\end{tikzpicture}
	\caption{The range of $f$.}\label{fig:range(f)}
	\end{figure}
	
        Since $f$ is convex, $A_n^u \subseteq\text{\rm Epi}[f] \cap A_n$ for all $n\geq 0$.
        In addition, we have
        \[
        	\Dimh \left(\bigcup_{n=0}^\infty A_n^l  \right)
        	\leq \Dimh \left(\bigcup_{n=0}^\infty A_n^u  \right),
	\]
        thanks to mid-axial symmetry. Consequently,
        \begin{align*}
	        \Dimh\left(\text{\rm Epi}[f]\right)
	        	&\ge\Dimh\left(\bigcup_{n=0}^\infty A_n^u \right)\\
	        &= \max\left\{\Dimh\left(\bigcup_{n=0}^\infty A_n^u \right),
	        	\Dimh\left(\bigcup_{n=0}^\infty A_n^l \right) \right\}\\
		&=  \Dimh\left(\bigcup_{n=0}^\infty A_n \right).
        \end{align*}
        Therefore, it suffices to prove that
        \begin{equation}\label{goal:LB:Epi}\begin{split}
       		\Dimh\left(\bigcup_{n=0}^\infty A_n \right)&\geq \sup\left\{\rho\ge 1:\
			\sum_{n=0}^\infty\left(\frac{f_n}{\e^n}\right)^{\rho-1}
			=\infty\right\} \\
		&=\inf\left\{\rho\ge 1:\
			\sum_{n=0}^\infty\left(\frac{f_n}{\e^n}\right)^{\rho-1}<\infty\right\}.
	\end{split}\end{equation}
	This and Cauchy's integral test together prove that $\Dimh(\text{\rm Epi}[f])$
	is bounded from below by the right-hand side of \eqref{dimtest}, and hence complete
	the proof.
	
	Let us define a Borel measure $\mu$ on $\R^2$
	as follows: For every Borel set $E\subset \R^2$,
	\[
		\mu(E\cap \cS_n) :=\left| E\cap A_n\right|,
	\]
	where $|\cdots|$ denotes the 2-dimensional Lebesgue measure.
	For every upright box of the form $(x_1\,,x_1+r]\times (x_2\,, x_2+r]$---%
	where $r\geq 1$---and for every $\rho \in (0\,,1]$,
	\begin{align*}
		\mu\left( (x_1\,,x_1+r]\times (x_2\,, x_2+r]\right) &\leq
			\left(r \wedge f_n \right)\times r\\
		&= \left(\frac{r \wedge f_n}{f_n}\right) \times f_n\times r\\
		&\leq \left(\frac{r}{f_n}\right)^\rho \times f_n\times r\\
		&= r^{1+\rho} f_n^{1-\rho}.
	\end{align*}
	We now use the density theorem of Barlow and Taylor \cite[Theorem 4.1]{BT1992},
	and the fact that $\mu(A_n)=f_n ( \e^{n+1}-\e^n )$, in order to obtain
	the following:
	\[
		\nu_{1+\rho}^n(A_n) \geq  f_n^{\rho-1}\e^{-n(1+\rho)}\mu(A_n)
		\geq \text{const}\cdot \left(\frac{f_n}{\e^n}\right)^\rho.
	\]
	This inequality immediately implies \eqref{goal:LB:Epi}
	and completes the proof of the lower bound under the more restrictive
	condition \eqref{liminf:f:e}.
	
	To complete the argument, we outline how one changes the preceding to
	accommodate the more general condition \eqref{liminf:f}.
	Choose and fix a real number $a>1$ such that
	\[
		\liminf_{x\to\infty}\,\frac{f(x)}{x}>a,
	\]
	and redefine $\{\cV_k\}_{k\ge0}$---%
	hence also $\{\cS_k\}_{k\ge0}$---in \eqref{VS}. Now repeat the preceding
	argument but everywhere replace $\e^n$ and $\e^{n+1}$  by $a^n$ and
	$a^{n+1}$ respectively. Proposition \ref{pr:BT:Dim} ensures that these changes do not
	affect the end result of the method.
\end{proof}

Having just completed the proof of Proposition \ref{pr:Epi},
we can now establish Corollary \ref{co:Epi}. This proof will conclude
the material of this section.

\begin{proof}[Proof of Corollary \ref{co:Epi}]
	The function $f(x):=x(\log_+x)^p$ satisfies the conditions of
	Proposition \ref{pr:Epi}, and one can deduce the asserted formula
	for $\Dimh(\mathcal{E}_p)$ from Proposition \ref{pr:Epi}.
	
	If $q>1$, then $f(x):=x^q$ satisfies
	\eqref{liminf:f}, and Proposition \ref{pr:Epi} immediately shows that
	$\Dimh(\widetilde{\mathcal{E}_q})=1$.
	
	Finally, if $q\in[0\,,1]$, then it suffices to prove
	that $\Dimh(\widetilde{\mathcal{E}_q})\ge2$.
	Because $\widetilde{\mathcal{E}_q}\cap\cS_n$ contains
	\[
		\mathcal{F}_n:=\left\{(x\,,y):\, \e^n<
		x\le y\le \e^{n+1}\right\},
	\]
	for all sufficiently large integers $n\gg1$, it suffices to prove that
	\[
		\Dimh\left(\bigcup_{n=0}^\infty\mathcal{F}_n\right)\ge2.
	\]
	Define
	\[
		\mathcal{F}_n':=\left\{(x\,,y):\, \e^n< y\le x \le \e^{n+1}\right\},
	\]
	and note that
	\[
		\Dimh\left(\bigcup_{n=0}^\infty\mathcal{F}_n\right)=
		\Dimh\left(\bigcup_{n=0}^\infty\mathcal{F}_n'\right),
	\]
	thanks to mid-axial symmetry.
	As was observed first by Barlow and Taylor \cite{BT1992},
	\[
		\Dimh(A\cup B) = \max\{\Dimh(A)\,,\Dimh(B)\},
	\]
	for all
	sets $A$ and $B$. Therefore, it remains to prove that
	\begin{equation}\label{eq:DD}
		\Dimh\left(\bigcup_{n=0}^\infty[\mathcal{F}_n\cup\mathcal{F}_n']
		\right)\ge2.
	\end{equation}
	
	Since $\cup_{n=0}^\infty[\mathcal{F}_n\cup\mathcal{F}_n']=
	\cup_{n=0}^\infty(\e^n,\e^{n+1}]^2$, we can define a measure $\mu$
	on $\cup_{n=0}^\infty[\mathcal{F}_n\cup\mathcal{F}_n']$ as follows:
	\[
		\mu\left( A\cap\cS_n\right)
		:=\left| A\cap\left(\e^n,\e^{n+1}\right]^2\right|,
	\]
	for all $n\ge 0$ and Borel sets $A\subset\R^2_+$, where $|\,\cdots|$
	denotes the planar Lebesgue measure. Since
	$\mu((a\,,a+r]\times(b\,,b+r])\le r^2$ and
	$\mu(\cS_n)\ge \text{const}\cdot \e^{2n}$ uniformly for all $n\ge 1$,
	an appeal to a density theorem of Barlow and Taylor \cite[Theorem 4.1]{BT1992}
	yields \eqref{eq:DD}.
\end{proof}

\section{Proof of Theorem \ref{th:SHE} and Corollary \ref{co:SHE}}
\subsection{A large deviations estimate}

The following is the main result of this section.

\begin{proposition}\label{pr:tails}
	If \eqref{cond:interm} holds,
	then there exist positive and finite constants $b_0$,
	$K_0<L_0$, and $K<L$ such that
	\[
		K_0\exp\left( -L\beta^{3/2}t\right)\le
		\P\left\{ u(t\,,x)>\e^{\beta t}\right\} \le L_0\exp\left( -K\beta^{3/2} t\right),
	\]
	uniformly for all  $t>1$, $\beta> b_0$, and $x\in\R$.
\end{proposition}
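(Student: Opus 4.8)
The plan is to obtain the two bounds separately, using the moment bounds \eqref{eq:moments:SHE} together with standard large-deviations/Chebyshev arguments. For the upper bound, I would start from Markov's inequality applied to the $k$-th moment: for any $k\ge 2$,
\[
	\P\left\{ u(t\,,x)>\e^{\beta t}\right\}
	\le \e^{-k\beta t}\,\E\left(|u(t\,,x)|^k\right)
	\le N_0^k \exp\left( Nk^3 t - k\beta t\right).
\]
The exponent $Nk^3t - k\beta t$ is, up to the harmless prefactor $N_0^k$, minimized in $k$ by choosing $k$ of order $\beta^{1/2}$; concretely, taking $k = c\beta^{1/2}$ for a suitable small constant $c$ (and noting $\beta>b_0$ large guarantees $k\ge 2$) makes the exponent equal to $-(\text{const})\beta^{3/2}t$, while $N_0^k = \exp(c\beta^{1/2}\log N_0)$ is negligible compared to $\exp(-(\text{const})\beta^{3/2}t)$ once $t>1$ and $\beta$ is large, and can be absorbed into the constant $L_0$ (or into shrinking $K$ slightly). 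This yields the claimed upper bound with some $K>0$ and $L_0$.

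For the lower bound, the natural route is a Paley--Zygmund-type inequality. Fix $k\ge 2$ and write, for $0<\lambda<1$,
\[
	\P\left\{ u(t\,,x)>\lambda\left(\E|u(t\,,x)|^k\right)^{1/k}\right\}
	\ge (1-\lambda^k)^2\,\frac{\left(\E|u(t\,,x)|^k\right)^2}{\E|u(t\,,x)|^{2k}}.
\]
Using the lower bound $\E|u(t\,,x)|^k \ge M_0^k \e^{Mk^3 t}$ and the upper bound $\E|u(t\,,x)|^{2k}\le N_0^{2k}\e^{8Nk^3 t}$ from \eqref{eq:moments:SHE}, the ratio on the right is at least $(M_0/N_0)^{2k}\exp\left(-(8N-2M)k^3 t\right)$, which with $\lambda=1/2$, say, is bounded below by $\exp(-(\text{const})k^3 t)$ for $t>1$. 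Meanwhile $\lambda(\E|u(t\,,x)|^k)^{1/k}\ge (M_0/2)\e^{Mk^2 t}$, so on the event in question we have $u(t\,,x)> (M_0/2)\e^{Mk^2 t}$. To force this to exceed $\e^{\beta t}$ I would again choose $k = c'\beta^{1/2}$ with $c'$ large enough that $Mk^2 = Mc'^2\beta \ge 2\beta$ for all $\beta>b_0$ (and the multiplicative constant $M_0/2$ is absorbed since $t>1$); then $k^3 t = c'^3\beta^{3/2}t$, so the probability is at least $\exp(-(\text{const})\beta^{3/2}t)$, which is the desired lower bound with the prefactor $K_0$ accommodating lower-order terms. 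Uniformity in $x\in\R$ is automatic because \eqref{eq:moments:SHE} is uniform in $x$.

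The main obstacle is bookkeeping rather than conceptual: one must check that $k=c\beta^{1/2}$ and $k=c'\beta^{1/2}$ can be taken to be integers (or that the moment bounds extend to non-integer $k\ge2$, which they do by the stated hypotheses since the moments are controlled for all real $k\ge2$), that the constants $c,c'$ can be chosen compatibly so that the final $K<L$ genuinely satisfy the strict inequality and all the absorbed prefactors ($N_0^k$, $M_0^k$, $(M_0/2)$, the $(1-\lambda^k)^2$ factor) are indeed dominated once $\beta>b_0$ and $t>1$, and that $b_0$ is chosen large enough to guarantee $k\ge 2$ in both arguments. The constant $8N-2M$ arising from $\E|u|^{2k}$ with $2k$ in place of $k$ (giving $(2k)^3=8k^3$) is where one loses a factor and must simply verify the resulting exponent is still negative of the right order, which it is. None of these steps is deep; the only care needed is to make sure no hidden dependence on $t$ or $x$ slips into the constants, which it does not because \eqref{eq:moments:SHE} is uniform in both.
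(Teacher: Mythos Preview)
Your proposal is correct and follows essentially the same approach as the paper: Chebyshev/Markov with the upper moment bound optimized at $k\asymp\beta^{1/2}$ for the upper tail, and Paley--Zygmund combined with both moment bounds for the lower tail. The only cosmetic difference is that the paper states Paley--Zygmund with a free parameter $q>1$ (comparing the $m$th and $qm$th moments) and then optimizes over $q$, whereas you fix $q=2$; since the proposition only asserts the existence of constants, your version is sufficient. One small correction: the prefactor $N_0^k$ (and similarly $(M_0/N_0)^{2k}$) depends on $\beta$ through $k$, so it cannot be absorbed into a fixed $L_0$ or $K_0$---your alternative of shrinking $K$ (respectively enlarging $L$) by using $\beta^{1/2}\ll\beta^{3/2}t$ for $t>1$ is the right move.
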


The proof hinges on the following moment inequality that
was mentioned earlier in the Introduction.

\begin{lemma}[Joseph et al \protect{\cite{JKM}}]\label{lem:moments}
	There exist positive and finite constants $M_0<N_0$ and $M<N$ such that
	\eqref{eq:moments:SHE} holds
	uniformly for all real numbers $k\ge 2$, $t>0$, and $x\in\R$.
\end{lemma}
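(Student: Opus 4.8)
The plan is to prove the lower and upper bounds in \eqref{eq:moments:SHE} separately, both starting from the mild (Walsh) formulation
\[
	u(t\,,x) = (p_t * u_0)(x) + \int_0^t\!\!\int_\R p_{t-s}(x-y)\,\sigma(u(s\,,y))\,\xi(\d s\,\d y),
	\qquad p_r(z):=\tfrac{1}{\sqrt{2\pi r}}\,\e^{-z^2/(2r)} .
\]
Only the lower bound will use the intermittency condition \eqref{cond:interm}, and only through the pointwise inequality $|\sigma(z)|\ge c_*|z|$ with $c_*:=\inf_{|z|>0}|\sigma(z)/z|>0$; the upper bound uses just the Lipschitz bound $|\sigma(z)|\le L_\sigma|z|$, $L_\sigma:=\lip$. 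Write $U:=\|u_0\|_\infty$ and $u_*:=\inf_{x\in\R}u_0(x)>0$.

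\emph{Upper bound.} I would set $\mathcal{H}_k(t):=\sup_{x\in\R}\E(|u(t\,,x)|^k)^{2/k}$, so that $\mathcal{H}_k(t)^{k/2}=\sup_x\E(|u(t\,,x)|^k)$. Taking $L^k(\P)$-norms in the mild formulation and using Minkowski's inequality, the Burkholder--Davis--Gundy inequality for Walsh integrals (whose constant is $O(\sqrt k)$), the bound $|\sigma(z)|\le L_\sigma|z|$, and $\int_\R p_r(z)^2\,\d z=(4\pi r)^{-1/2}$, one is led to the weakly singular renewal inequality
\[
	\mathcal{H}_k(t)\le 2U^2 + \frac{4L_\sigma^2 k}{\sqrt\pi}\int_0^t\frac{\mathcal{H}_k(s)}{\sqrt{t-s}}\,\d s
	\qquad\text{for all }t>0,\ k\ge 2 .
\]
Iterating this (equivalently, the generalized Gronwall lemma for Riemann--Liouville kernels) gives $\mathcal{H}_k(t)\le 2U^2\sum_{n\ge0}(4L_\sigma^2 k)^n t^{n/2}/\Gamma(1+\tfrac n2)\le C U^2\e^{16L_\sigma^4 k^2 t}$ for an absolute constant $C$, uniformly in $k\ge2$, $t>0$. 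Raising to the power $k/2$ yields $\sup_x\E(|u(t\,,x)|^k)\le N_0^k\e^{Nk^3t}$ with $N_0:=\sqrt C\,U$ and $N:=8L_\sigma^4$. Note that the cubic exponent here is purely bookkeeping: it is the cube of an $L^k$-norm whose logarithm already grows like $k^2 t$.

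\emph{Lower bound.} It suffices to treat even integers $k=2,4,6,\dots$, since the general real $k\ge2$ then follows from Lyapunov (power-mean) monotonicity of $k\mapsto\E(|u(t\,,x)|^k)^{1/k}$, at the cost of decreasing $M$ by a bounded factor. By the pathwise comparison principle for \eqref{SHE} (Mueller; Donati-Martin--Pardoux), $u(t\,,x)\ge v(t\,,x)$ a.s., where $v$ solves \eqref{SHE} with constant initial datum $v(0)\equiv u_*$; since the law of $v(t\,,x)$ is independent of $x$, it is enough to bound $\E(v(t\,,0)^k)$ from below. Here I would invoke the noise-comparison principle for moments---this is where $|\sigma(z)|\ge c_*|z|$ enters: for even $k$, $\E(v(t\,,0)^k)\ge\E(w(t\,,0)^k)$, where $w$ solves the constant-coefficient parabolic Anderson equation $\dot w=\tfrac12 w'' + c_* w\,\xi$ with $w(0)\equiv u_*$. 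For $w$ one has the Bertini--Cancrini Feynman--Kac moment formula $\E(w(t\,,0)^k)=u_*^k\,\E\exp\{c_*^2\sum_{1\le i<j\le k}L^0_t(B^i-B^j)\}$ with independent Brownian motions $B^1,\dots,B^k$ and $L^0$ their local time at $0$; the standard large-$k$ analysis of this formula (Bertini--Cancrini; Chen) gives $\E(w(t\,,0)^k)\ge M_0^k\e^{Mk^3 t}$ uniformly in $t>0$. Chaining these inequalities yields the lower bound in \eqref{eq:moments:SHE}, and one then shrinks $M,M_0$ (or enlarges $N,N_0$) to arrange $M<N$ and $M_0<N_0$.

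I expect the lower bound to be the main obstacle, and within it the two substantive inputs are: (i) the noise-comparison inequality $\E(v^k)\ge\E(w^k)$, whose proof runs through the Wiener-chaos expansion and positivity of its diagonal contributions for even $k$; and (ii) the genuinely cubic growth $\E(w(t\,,0)^k)\asymp\exp\{ck^3 t\}$ for the parabolic Anderson model, where---unlike in the upper bound---the third power of $k$ is not an artifact but encodes the multiplicative cascade structure of the solution: the $\binom k2$ pairwise Brownian interactions in the Feynman--Kac formula, each local time at scale $\sqrt t$, and a further factor $k$ from their mutual interaction combine into the scale $k^3 t$. A self-contained proof avoiding the exact formula would instead require a delicate multiscale/regeneration argument over unit time blocks, which I would keep only as a fallback. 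A minor but real point of care is uniformity in $t>0$---in particular small $t$, where $\e^{Mk^3t}$ need not be near $1$ once $k$ is large---but both the renewal iteration and the Feynman--Kac asymptotics hold for all $t>0$, so no separate small-$t$ argument is needed.
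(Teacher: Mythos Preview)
The paper does not prove this lemma at all: it is stated as a citation of Joseph, Khoshnevisan, and Mueller \cite{JKM}, with no accompanying argument. So there is no ``paper's own proof'' to compare against; your proposal should be read as a sketch of the argument behind the cited result.

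As such, your sketch is accurate and tracks the JKM strategy closely. The upper bound is the standard renewal/Gronwall argument via BDG with constant $O(\sqrt k)$; this is exactly the computation the present paper carries out for the localized solution in Lemma~\ref{lem:moments:loc} (see \eqref{UB0}--\eqref{12}), so your method here coincides with what the authors themselves use in a neighboring context. For the lower bound you correctly identify the two real inputs: (i) the moment noise-comparison inequality $\E(v^k)\ge\E(w^k)$ reducing to the linear PAM---this \emph{is} the main theorem of \cite{JKM}, so you are not proving the lemma so much as naming its source---and (ii) the explicit Bertini--Cancrini/Kardar formula $\E(w(t,0)^k)=u_*^k\exp\{\tfrac{c_*^4}{24}(k^3-k)t\}$ for integer $k$, which gives the $k^3t$ scale directly rather than through the ``large-$k$ analysis'' you allude to. Your interpolation from even integers to real $k\ge2$ via $k\mapsto\|u\|_k$ monotonicity is fine once one checks the borderline $k\in[2,4)$ separately (there $(2m)^2k=4k$ and $k^3\le16k$, so one loses at most a factor $16$ in $M$). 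The only citation I would adjust is for the pathwise initial-data comparison: this is not in Mueller or Donati-Martin--Pardoux in the form you need, but it does follow from the moment comparison machinery in \cite{JKM} itself, so the step is not in doubt.
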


We will use Lemma \ref{lem:moments} in order to establish
Proposition \ref{pr:tails} in two steps: An upper bound (see
Lemma \ref{lem:LD:UB}) and a ``matching'' lower bound
(see Lemma \ref{lem:LD:LB}). Those results are presented
in the sequel, and without further comment.

\begin{lemma}\label{lem:LD:UB}
	Let $N$ denote the constant of Lemma \ref{lem:moments}. Then
	for all $\beta\ge 12 N$,
	\[
		\limsup_{t\to\infty} t^{-1}\sup_{x\in\R}\log
		\P\left\{ u(t\,,x)>\e^{\beta t}\right\}\le - \frac{2\beta^{3/2}}{3\sqrt{3N}}.
	\]
\end{lemma}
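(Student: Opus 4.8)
The plan is to obtain the upper tail bound $\P\{u(t,x)>\e^{\beta t}\}$ by a standard Chebyshev/moment argument, exploiting the cubic-in-$k$ growth of moments furnished by Lemma~\ref{lem:moments} and then optimizing over the moment order $k$. Concretely, for any $k\ge 2$ Chebyshev's inequality gives
\[
	\P\left\{u(t,x)>\e^{\beta t}\right\}
	\le \e^{-k\beta t}\,\E\left(|u(t,x)|^k\right)
	\le N_0^k\,\e^{-k\beta t + Nk^3 t},
\]
uniformly in $x\in\R$ and $t>0$, where $N_0,N$ are the constants of Lemma~\ref{lem:moments}. Taking $t^{-1}\log$ of both sides yields $t^{-1}\log\P\{\cdots\}\le k\log N_0 + k(Nk^2-\beta)$, so passing to $\limsup_{t\to\infty}$ kills the $k\log N_0/t$-type terms (more carefully, the $k\log N_0$ term is $t$-independent, so it does not survive after multiplying by $t^{-1}$ and letting $t\to\infty$; this is exactly why the statement is phrased as a $\limsup$). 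Thus the plan is to show
\[
	\limsup_{t\to\infty} t^{-1}\sup_{x\in\R}\log\P\left\{u(t,x)>\e^{\beta t}\right\}
	\le \inf_{k\ge 2}\left(Nk^3 - k\beta\right)t\cdot t^{-1}
	= \inf_{k\ge 2}\left(Nk^3 - k\beta\right),
\]
and then to carry out the elementary minimization of $k\mapsto Nk^3-k\beta$ over $k\ge 2$.

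The optimization is a one-variable calculus exercise: $\frac{\d}{\d k}(Nk^3-k\beta) = 3Nk^2-\beta$ vanishes at $k_\star=\sqrt{\beta/(3N)}$, and one checks this is a minimum. The constraint $\beta\ge 12N$ is precisely what guarantees $k_\star = \sqrt{\beta/(3N)}\ge\sqrt{12N/(3N)}=2$, so the unconstrained minimizer is admissible and one does not land on the boundary $k=2$. Substituting $k_\star$ back,
\[
	Nk_\star^3 - k_\star\beta
	= N\left(\frac{\beta}{3N}\right)^{3/2} - \left(\frac{\beta}{3N}\right)^{1/2}\beta
	= \frac{\beta^{3/2}}{3\sqrt{3N}} - \frac{\beta^{3/2}}{\sqrt{3N}}
	= -\frac{2\beta^{3/2}}{3\sqrt{3N}},
\]
which is exactly the claimed bound. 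So the proof is essentially: Chebyshev with the $k$-th moment, apply Lemma~\ref{lem:moments}, take $t^{-1}\log\limsup$, minimize the resulting cubic in $k$, and verify that $\beta\ge 12N$ places the minimizer in the feasible region $[2,\infty)$.

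I do not expect a genuine obstacle here; the one point that requires a little care is the interchange of the $\limsup_{t\to\infty}$ with the infimum over $k$. Since the bound $t^{-1}\log\P\{\cdots\}\le k^{-1}t^{-1}\cdot(\text{nothing})$—more precisely $t^{-1}\sup_x\log\P\{\cdots\}\le k\log N_0\cdot t^{-1} + (Nk^3-k\beta)$—holds for \emph{every} fixed $k\ge 2$ simultaneously, one first fixes $k=k_\star$ (rounded into $[2,\infty)$, which is unnecessary here since $k_\star\ge 2$ is real and Lemma~\ref{lem:moments} allows all real $k\ge2$), lets $t\to\infty$ to obtain $\limsup_{t\to\infty}t^{-1}\sup_x\log\P\{\cdots\}\le Nk_\star^3-k_\star\beta = -2\beta^{3/2}/(3\sqrt{3N})$, and there is no need to take an infimum at all. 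The uniformity in $x$ is automatic because the moment bound of Lemma~\ref{lem:moments} is uniform in $x\in\R$, so $\sup_{x\in\R}$ passes harmlessly through the Chebyshev estimate. Hence the argument is short and the only ``care'' needed is bookkeeping the constants and confirming $\beta\ge 12N\Rightarrow k_\star\ge 2$.
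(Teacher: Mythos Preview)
Your proposal is correct and follows essentially the same approach as the paper's proof: Chebyshev's inequality combined with the moment bound of Lemma~\ref{lem:moments}, followed by the optimal choice $k=\sqrt{\beta/(3N)}$ (which lies in $[2,\infty)$ precisely because $\beta\ge 12N$). The paper's argument is slightly terser but identical in substance, and your verification of the optimizer and the resulting constant $-2\beta^{3/2}/(3\sqrt{3N})$ matches exactly.
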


\begin{proof}
	Choose and fix a real number $k\ge 2$.
	By Lemma \ref{lem:moments} and Chebyshev's inequality,
	the following holds uniformly for all $t>0$ and $x\in\R$:
	\begin{equation}\label{LD:UB}
		t^{-1}\log\P\left\{ u(t\,,x)>\e^{\beta t}\right\} \le \frac{k\log N_0}{t}
		-\beta k + Nk^3 = -\beta k + Nk^3 + o(1),
	\end{equation}
	as $t\to\infty$. The asymptotically-optimum choice of $k$ is $\sqrt{\beta/3N}$,
	which is at least $2$ when $\beta\ge12N$. Plug  $k:=\sqrt{\beta/3N}$
	in \eqref{LD:UB} to obtain the lemma.
\end{proof}

\begin{lemma}\label{lem:LD:LB}
	Let $M$ and $N$ denote the constants of Lemma \ref{lem:moments}. Then
	for all $\beta\ge 4M$,
	\[
		\liminf_{t\to\infty} t^{-1}\inf_{x\in\R}\log
		\P\left\{ u(t\,,x)>\e^{\beta t}\right\}
		\ge -\left(\frac{\beta}{M}\right)^{3/2}\inf_{q>1}\frac{q(Nq^2-M)}{q-1}.
	\]
\end{lemma}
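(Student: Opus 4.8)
The plan is to obtain a lower bound on $\P\{u(t,x)>\e^{\beta t}\}$ via the Paley--Zygmund inequality applied to a suitable power of $u(t,x)$. First I would fix a real number $q>1$ and set $k:=q\cdot k_0$ where $k_0$ is the reference exponent to be chosen momentarily; the idea is that $\P\{u(t,x)>\e^{\beta t}\}=\P\{u(t,x)^{k_0}>\e^{\beta k_0 t}\}$, and we estimate the latter by Paley--Zygmund using the first two moments of the random variable $Y:=u(t,x)^{k_0}$. Concretely, for any $\lambda\in(0,1)$,
\[
	\P\{Y>\lambda\,\E Y\}\ge(1-\lambda)^2\,\frac{(\E Y)^2}{\E(Y^2)},
\]
and with $\E Y=\E(u(t,x)^{k_0})$ and $\E(Y^2)=\E(u(t,x)^{2k_0})$ this is controlled from below by Lemma~\ref{lem:moments}: the numerator is at least $M_0^{2k_0}\e^{2Mk_0^3 t}$ and the denominator at most $N_0^{2k_0}\e^{8Nk_0^3 t}$, so
\[
	\P\{Y>\lambda\,\E Y\}\ge(1-\lambda)^2(M_0/N_0)^{2k_0}\exp\!\left(-(8N-2M)k_0^3 t\right).
\]
To convert the threshold $\lambda\,\E Y$ into the threshold $\e^{\beta k_0 t}$ we need $\lambda\,\E Y\ge\e^{\beta k_0 t}$; since $\E Y\ge M_0^{k_0}\e^{Mk_0^3 t}$, it suffices to take $\lambda$ of order $\exp(\beta k_0 t-Mk_0^3 t)$ up to the $M_0^{k_0}$ factor, which is $\le 1$ precisely when $Mk_0^2\ge\beta$, i.e. $k_0\ge\sqrt{\beta/M}$. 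So the natural choice is $k_0:=\sqrt{\beta/M}$ (which is $\ge2$ when $\beta\ge4M$), and then $(1-\lambda)^2$ is bounded below by a constant, leaving
\[
	\P\{u(t,x)>\e^{\beta t}\}\ge\text{const}\cdot\exp\!\left(-(8N-2M)k_0^3 t+o(t)\right)=\text{const}\cdot\exp\!\left(-(8N-2M)(\beta/M)^{3/2}t+o(t)\right).
\]

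That crude two-moment argument does not yet produce the $\inf_{q>1}$ in the statement; the refinement is to use a higher moment in the denominator, or rather to play the ratio of moments at two different exponents. The sharper route is: for $q>1$ write, by Hölder/Paley--Zygmund with exponent $q$ in the sense of the inequality $\E Y\le(\E Y\1_{\{Y\le\lambda\E Y\}})+ (\E(Y^q))^{1/q}(\P\{Y>\lambda\E Y\})^{1-1/q}$ applied to $Y=u(t,x)^{k_0}$, one gets
\[
	\P\{u(t,x)^{k_0}>\lambda\,\E(u(t,x)^{k_0})\}\ge\left(\frac{(1-\lambda)\,\E(u(t,x)^{k_0})}{(\E(u(t,x)^{qk_0}))^{1/q}}\right)^{q/(q-1)}.
\]
Now insert the moment bounds of Lemma~\ref{lem:moments}: the numerator is $\gtrsim\e^{Mk_0^3 t}$ and $(\E(u(t,x)^{qk_0}))^{1/q}\lesssim\e^{Nq^2 k_0^3 t}$, so the bracket is $\gtrsim\exp((M-Nq^2)k_0^3 t)$, and raising to the power $q/(q-1)$ gives $\exp(-\frac{q(Nq^2-M)}{q-1}k_0^3 t)$ up to subexponential factors; the threshold condition $\lambda\E(u(t,x)^{k_0})\ge\e^{\beta k_0 t}$ again forces $k_0=\sqrt{\beta/M}$, hence $k_0^3=(\beta/M)^{3/2}$. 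Taking $t\to\infty$, dividing by $t$, and then optimizing over $q>1$ yields exactly
\[
	\liminf_{t\to\infty}t^{-1}\inf_{x\in\R}\log\P\{u(t,x)>\e^{\beta t}\}\ge-\left(\frac{\beta}{M}\right)^{3/2}\inf_{q>1}\frac{q(Nq^2-M)}{q-1}.
\]

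The steps in order: (i) fix $q>1$, set $k_0:=\sqrt{\beta/M}$ and verify $k_0\ge2$ under $\beta\ge4M$; (ii) apply the Hölder/Paley--Zygmund inequality to $Y=u(t,x)^{k_0}$ with the auxiliary exponent $q$; (iii) substitute the two-sided moment bounds from Lemma~\ref{lem:moments} into numerator and denominator; (iv) check that the factor $(M_0/N_0)^{k_0}$ and the $(1-\lambda)^{q/(q-1)}$ factor are subexponential in $t$ (they are constant or polynomially small in $\beta$ but have no $t$-dependence), so they disappear after dividing by $t$ and sending $t\to\infty$; (v) optimize over $q>1$. The main obstacle is bookkeeping rather than conceptual: one has to make sure the $\lambda$ needed to pull the threshold $\e^{\beta k_0 t}$ below $\lambda\,\E(u(t,x)^{k_0})$ is bounded away from $1$ (equivalently, that $Mk_0^2\ge\beta$ with room to spare, handled by the exact choice $k_0=\sqrt{\beta/M}$ together with the $o(t)$ error from $M_0^{k_0}$), and that the infimum over $q>1$ is genuinely attained at a finite $q$ so the bound is non-vacuous — here $q(Nq^2-M)/(q-1)\to\infty$ as $q\downarrow1$ and as $q\to\infty$, so the infimum is finite and positive, giving a meaningful estimate. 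The uniformity in $x$ is immediate because every moment bound from Lemma~\ref{lem:moments} is uniform in $x\in\R$.
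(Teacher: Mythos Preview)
Your proposal is correct and follows essentially the same route as the paper: the generalized Paley--Zygmund inequality applied to $u(t,x)^{k_0}$ with auxiliary exponent $q$, followed by insertion of the two-sided moment bounds of Lemma~\ref{lem:moments} and optimization over $q>1$. The only cosmetic difference is that the paper takes $k_0$ (called $m$ there) \emph{strictly} larger than $\sqrt{\beta/M}$ and then lets $m\downarrow\sqrt{\beta/M}$, which cleanly sidesteps the threshold issue you flag (your fix ``$o(t)$ error from $M_0^{k_0}$'' does not quite work if $M_0<1$, since then $\lambda\ge M_0^{-k_0}>1$; the strict inequality $m^2M>\beta$ is what makes $\e^{\beta t}<\tfrac12 M_0\e^{Mm^2t}$ hold for large $t$ regardless of the size of $M_0$).
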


\begin{proof}
	Choose and fix real numbers $q>1$ and $m\ge 2$, and define
	\[
		C_{m,q} := \left( 1- 2^{-m}\right)^{q/(q-1)}.
	\]
	By the Paley--Zygmund inequality (apply \cite[Lemma 7.3, p.\ 64]{CBMS} with $n:=qm$),
	\[
		\P\left\{ u(t\,,x) >\tfrac12\|u(t\,,x)\|_m\right\} \ge
		C_{m,q}\cdot\frac{\left[\E\left(|u(t\,,x)|^m\right)\right]^{q/(q-1)}}{
		\left[\E\left(|u(t\,,x)|^{qm}\right)\right]^{1/(q-1)}}\ge
		C_{2,q}\cdot\frac{\left[\E\left(|u(t\,,x)|^m\right)\right]^{q/(q-1)}}{
		\left[\E\left(|u(t\,,x)|^{qm}\right)\right]^{1/(q-1)}},
	\]
	where $\| X\|_m:=[\E (X^m)]^{1/m}$ for every
	$X\in L^m(\Omega)$.
	Therefore, Lemma \ref{lem:moments} implies that
	\begin{equation}\label{PZ:LB}
		\P\left\{ u(t\,,x) >\tfrac12\|u(t\,,x)\|_m\right\} \ge
		D_{m,q}\cdot\exp\left( -\frac{qm^3t(Nq^2-M)}{q-1}\right),
	\end{equation}
	where $D_{m,q} := C_{2,q} (M_0/N_0)^{mq/(q-1)}$.
	
	If $4M\le \beta <m^2M$, then
	\begin{equation}\label{gogo}
		\e^{\beta t} < \tfrac{1}{2}M_0 \e^{Mm^2 t}\le \tfrac12\|u(t\,,x)\|_m,
	\end{equation}
	for all $x\in\R$, and $t$ sufficiently large. Therefore,
	\eqref{PZ:LB} and \eqref{gogo} together imply that
	\begin{equation}\label{zoo}
		\P\left\{ u(t\,,x) >\e^{\beta t}\right\} \ge
		D_{m,q}\cdot\exp\left( -\frac{qm^3t(Nq^2-M)}{q-1}\right),
	\end{equation}
	for all $m\ge2$ such that $4M \le \beta<m^2 M$.  In particular,
	\[
		\liminf_{t\to\infty} t^{-1}\log\P\left\{ u(t\,,x)>\e^{\beta t}\right\}
		\ge -\frac{qm^3(Nq^2-M)}{q-1},
	\]
	for all $m\ge2$ such that $4M\le \beta<m^2M$. Let $m$ tend
	downward to ${\sqrt{\beta/M}}$ in order
	to see that
	\[
		\liminf_{t\to\infty} t^{-1}\log\P\left\{ u(t\,,x)>\e^{\beta t}\right\}
		\ge -\frac{q(Nq^2-M)}{q-1}\cdot\left(\frac{\beta}{M}\right)^{3/2},
	\]
	for all $q>1$. We can optimize the right-hand side of this expression over all
	$q>1$ to complete the derivation.
\end{proof}

\begin{proof}[Proof of Proposition \ref{pr:tails}]
	The upper bound in the statement of the proposition follows
	immediately from Lemma \ref{lem:LD:UB}. In order to deduce the
	lower bound, we first apply \eqref{zoo} with
	$m:=\sqrt{2\beta/M}$ in order to see that uniformly
	for all $x\in\R$, $\beta\ge 4M$, $q>1$, and large $t>0$,
	\[
		\P\left\{ u(t\,,x) >\e^{\beta t}\right\} \ge
		C_{2,q}\left(\frac{M_0}{N_0}\right)^{\sqrt{2\beta/M}q/(q-1)}
		\cdot\exp\left( -\frac{q(2\beta)^{3/2}t(Nq^2-M)}{(q-1)M^{3/2}}\right).
	\]
	In particular,
	\[
		\liminf_{\beta\to\infty}
		\beta^{-3/2}\inf_{t>0}
		\left[t^{-1} \log\P\left\{ u(t\,,x) >\e^{\beta t}\right\}\right]  \ge
		-\left(\frac {2}{M}\right)^{3/2}\inf_{q>1}\left[\frac{q (Nq^2-M)}{(q-1)}\right]
		>-\infty.
	\]
	These facts and Lemma \ref{lem:LD:LB}
	together establish the lower bound of the
	proposition for $t$ sufficiently large, say $t>t_0$ for
	a sufficiently-large $t_0>1$. When
	$t\in(1\,,t_0)$, we appeal to \eqref{PZ:LB}, but adjust the constants
	in \eqref{zoo} suitably.
\end{proof}

\subsection{Correlation length}

The main result of this section is a carefully-stated
version of the following (see Theorem \ref{th:localization}): As $t\to\infty$,
the correlation length of $x\mapsto u(t\,,x)$ is at least
$at^2$ for a suitable constant $a$. The proof relies on a localization
idea that was introduced in Conus  et al \cite{CJK}.

First, recall \cite{DZ,Walsh} that the solution to \eqref{SHE} can be written
as the unique solution to the stochastic integral equation,
\begin{equation}\label{eq:mild}
	u(t\,,x)
	= (p_t*u_0)(x)+ \int_{(0,t)\times \R}
	p_{t-s}(y-x)\sigma\left( u(s\,,y)\right)
	\xi(\d s\,\d y),
\end{equation}
 valid for all $x\in\R$ and $t>0$, where $p_t(x)$ denotes the heat kernel; that is,
 \begin{equation}\label{heat:kernel}
 	p_t(x) := \frac{\e^{-x^2/(2t)}}{\sqrt{2\pi t}}\qquad[t>0,x\in\R].
 \end{equation}

Now, let us choose and fix some $c>0$, and define intervals
\[
	\mathcal{I}(x\,,t\,; c) := \left[ x- \sqrt{ct}\,,x+ \sqrt{ct}\right],
\]
for every $x\in\R$ and $t>0$.
Define $u^{(c)}_0(x):=u_0(x)$ for all $x\in\R$,
and consider the random integral equation,
\begin{equation}\label{eq:heat:local}
	u^{(c)}(t,x)
	= (p_t*u_0)(x)+ \int_{(0,t)\times \mathcal{I}(x,t;c)}
	p_{t-s}(y-x)\sigma\left( u^{(c)}(s\,,y)\right)
	\xi(\d s\,\d y),
\end{equation}
 for all $x\in\R$ and $t>0$. This is a ``localized form'' of the solution $u$
 to \eqref{SHE}.

 One can prove that \eqref{eq:heat:local}
 has a unique strong solution, in the usual way, using Picard's iteration;
 see Proposition \ref{pr:loc} for a statement.
 Since we will need to pay close attention to the quantitative details of
 the argument---see in particular \eqref{eq:loc} below---we
 work out the details of that argument [together with the requisite
 estimates] in this section.

 Let $u^{(c,0)}(t,x) := u_0(x)$ for all $t\ge0$ and $x\in\R$, and then
 define
 \begin{equation}\label{eq:Un}
	u^{(c,n+1)}(t\,,x)
	= (p_t*u_0)(x)+ \int_{(0,t)\times\mathcal{I}(x,t;c)}
	p_{t-s}(y-x)\sigma\left( u^{(c,n)}(s\,,y)\right)
	\xi(\d s\,\d y),
 \end{equation}
 iteratively for all $n\ge0$.

 The sequence $u^{(c,1)}, u^{(c,2)},\ldots$ is basically
 the Picard-iteration approximation to the
 desired solution $u^{(c)}$ of \eqref{eq:heat:local}. Our first lemma
 estimates the moments of each $u^{(c,n)}$.

 \begin{lemma}\label{lem:moments:loc}
 	There exist positive and finite constants $A,A_0$
	such that for
	all real numbers $k\ge 2$,
	$t\ge0$, and $x\in \R$,
 	\[
		\sup_{c>0}\sup_{n\ge 0}
		\E\left( \left| u^{(c,n)}(t\,,x) \right|^k\right) \le
		A_0^k \e^{A k^3 t}.
	\]
 \end{lemma}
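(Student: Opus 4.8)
The plan is to run the standard Picard-iteration moment bootstrap, but carried out uniformly in the localization parameter $c>0$ and in the iteration index $n$. The key observation is that restricting the stochastic integral in \eqref{eq:Un} to the smaller space-time domain $(0,t)\times\mathcal{I}(x,t;c)$ only \emph{decreases} the $L^k(\Omega)$ norm of the noise term compared with the full-space version, because the Walsh isometry computes a nonnegative integral over that domain; hence one expects to reproduce, with constants independent of $c$, the very same kind of moment estimate that underlies \eqref{eq:moments:SHE}. So the proof is essentially a self-contained re-derivation of the ``$A_0^k\e^{Ak^3t}$'' bound at the level of the Picard scheme.

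First I would set $H_n(t):=\sup_{c>0}\sup_{x\in\R}\E(|u^{(c,n)}(t,x)|^k)$ and apply Minkowski's inequality to \eqref{eq:Un}, splitting into the deterministic term $(p_t*u_0)(x)$, which is bounded by $\|u_0\|_{L^\infty(\R)}^k$ uniformly, and the stochastic term. For the stochastic term I would invoke the Burkholder--Davis--Gundy inequality for Walsh-type stochastic integrals (with the sharp constant of order $\sqrt{k}$, as in \cite{CBMS,JKM}), reducing matters to the $L^{k/2}$ norm of the quadratic variation
\[
\int_{(0,t)\times\mathcal{I}(x,t;c)} p_{t-s}(y-x)^2\,\sigma\bigl(u^{(c,n-1)}(s,y)\bigr)^2\,\d s\,\d y
\le \int_0^t\!\!\int_\R p_{t-s}(y-x)^2\,\sigma\bigl(u^{(c,n-1)}(s,y)\bigr)^2\,\d y\,\d s,
\]
where the inequality just drops the spatial restriction. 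Using the Lipschitz bound $|\sigma(z)|\le\mathrm{Lip}_\sigma|z|$ (recall $\sigma(0)=0$), Minkowski's integral inequality in $L^{k/2}$, and the identity $\int_\R p_r(y)^2\,\d y=(4\pi r)^{-1/2}$, this leads to a renewal-type inequality of the form
\[
H_n(t)^{2/k}\le c_1\|u_0\|_{L^\infty}^2 + c_2\,k\int_0^t \frac{H_{n-1}(s)^{2/k}}{\sqrt{t-s}}\,\d s,
\]
with $c_1,c_2$ absolute constants (depending only on $\mathrm{Lip}_\sigma$).

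Next I would iterate this inequality. Writing $G_n(t):=H_n(t)^{2/k}$, repeated substitution gives $G_n(t)\le\sum_{j\ge0}(c_2 k)^j (c_1\|u_0\|_{L^\infty}^2)\,(\text{$j$-fold convolution of }r^{-1/2})$, and the $j$-fold convolution of $r^{-1/2}$ with itself evaluated at $t$ is $\Gamma(1/2)^j\,t^{j/2-1/2+\cdots}/\Gamma(\,\cdot\,)$ — more precisely it equals $\pi^{j/2}t^{(j-1)/2}/\Gamma((j+1)/2)$. Summing the resulting series in $j$ yields a bound of the shape $G_n(t)\le C\,\|u_0\|_{L^\infty}^2\,\exp(C' k^2 t)$ uniformly in $n$ and $c$, where I have used the Gamma-function asymptotics $\sum_j a^j/\Gamma((j+1)/2)\asymp\exp(C a^2)$. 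Raising to the power $k/2$ converts $\exp(C'k^2t)$ into $\exp(Ak^3t)$ and $\|u_0\|_{L^\infty}^2$ into $\|u_0\|_{L^\infty}^k\le A_0^k$, which is exactly the claimed inequality; monotone convergence (or simply keeping the $\sup_n$) then delivers the stated supremum over $n\ge0$ as well.

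The main obstacle — really the only delicate point — is bookkeeping the constants so that the final exponent is genuinely $Ak^3t$ with $A$ independent of $k$, $t$, $x$, $c$, and $n$. This requires the BDG constant to be used in its form $\asymp\sqrt{k}$ (an extra factor $k$ in $c_2k$ above is what upgrades $k^2$ to the correct $k^3$ after the power $k/2$), and it requires the Gamma-asymptotic step $\sum_{j\ge0}(ak)^j\pi^{j/2}t^{(j-1)/2}/\Gamma((j+1)/2)\le C\exp(C'k^2t)$ to be justified carefully — this is the standard Stirling estimate for the Mittag-Leffler–type series and is where one must be a little careful that the bound is uniform. Everything else is the routine Picard/BDG machinery, and the uniformity in $c$ and $n$ is automatic once one makes the single observation that shrinking $\mathcal{I}(x,t;c)$ to a subinterval can only shrink the relevant nonnegative integrals.
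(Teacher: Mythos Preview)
Your proposal is correct. Both you and the paper begin identically: apply Minkowski and the BDG inequality with constant $\asymp\sqrt{k}$ to \eqref{eq:Un}, use $|\sigma(z)|\le\lip|z|$, and drop the spatial restriction $\mathcal{I}(x,t;c)$ to obtain a bound uniform in $c$. The divergence is in how the resulting recursion is closed. You iterate the renewal inequality $G_n(t)\le a+bk\int_0^t G_{n-1}(s)(t-s)^{-1/2}\,\d s$ by hand, obtain a Mittag--Leffler--type series $\sum_j (bk)^j\pi^{j/2}t^{j/2}/\Gamma(\cdot)$, and invoke the asymptotic $E_{1/2}(z)\lesssim\e^{z^2}$ to produce $\exp(C'k^2t)$. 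The paper instead introduces the exponentially weighted norm $\mathcal{N}_{\alpha,k}(\Phi)=\sup_{t,x}\e^{-\alpha t}\|\Phi_t(x)\|_k$, shows $\mathcal{N}_{\alpha,k}(u^{(c,n+1)})\le\|u_0\|_{L^\infty}+(Qk)^{1/2}\alpha^{-1/4}\mathcal{N}_{\alpha,k}(u^{(c,n)})$, and then chooses $\alpha=16(Qk)^2$ to make the coefficient $\tfrac12$, so geometric iteration gives $\mathcal{N}_{16(Qk)^2,k}(u^{(c,n)})\le2\|u_0\|_{L^\infty}$ directly. The weighted-norm route avoids the Gamma/Stirling bookkeeping you flag as delicate, and more importantly the paper reuses the same $\mathcal{N}_{\alpha,k}$ machinery (with the same constant $Q$) in the subsequent Lemmas~\ref{lem:moments:diff:loc} and~\ref{lem:u-uc}, so the constants thread cleanly through the localization argument. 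Your approach is perfectly valid for this lemma in isolation, but if you continue in this style you will have to redo the series summation for each of those later estimates.
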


 \begin{proof}
 	Since $p_t*u_0$ is bounded uniformly by $\|u_0\|_{L^\infty(\R)}$,
	Minkowski's inequality yields
 	\begin{align}\label{UB0}
		\left\| u^{(c,n+1)}(t\,,x) \right\|_k
			&\le\|u_0\|_{L^\infty(\R)} + \left(4k
			\int_0^t\d s\int_{\mathcal{I}(x,t;c)}\d y\
			\left[ p_{t-s}(y-x)\right]^2 \left\| \sigma\left(u^{(c,n)}(s\,,y)\right)
			\right\|_k^2\right)^{1/2}\\\notag
		&\le\|u_0\|_{L^\infty(\R)} + \left(4k
			\int_0^t\d s\int_{-\infty}^{\infty}\d y\
			\left[ p_{t-s}(y-x)\right]^2 \left\| \sigma\left(u^{(c,n)}(s\,,y)\right)
			\right\|_k^2\right)^{1/2},
	\end{align}
	where we have used a special form of the Burkholder--Davis--Gundy inequality
	\cite[Theorem B.1, p.\ 103]{CBMS}
	in the first inequality. Because $\sigma$ is Lipschitz and vanishes at zero,
	$|\sigma(z)|\le\lip|z|$  for all $z\in\R$,
	where $\lip:=\sup_{z\in\R/0} |\sigma(z)/z|$ denotes  the Lipschitz constant
	of $\sigma$. Therefore,
 	\begin{align}\label{UB1}
		\left\| u^{(c,n+1)}(t\,,x) \right\|_k
			&\le\|u_0\|_{L^\infty(\R)} + \lip\left(4k
			\int_0^t\d s\int_{-\infty}^\infty\d y\
			\left[ p_{t-s}(y-x)\right]^2 \left\| u^{(c,n)}(s\,,y)
			\right\|_k^2\right)^{1/2}\\\notag
		&\le\|u_0\|_{L^\infty(\R)} + \e^{\alpha t}
			\lip\mathcal{N}_{\alpha,k}\left(
			u^{(c,n)}\right)\cdot\left(4k\int_0^t \e^{-2\alpha s}
           \| p_s(\cdot)\|_{L^2(\R)}^2\,\d s \right)^{1/2},
	\end{align}
	where
	\begin{equation}\label{N}
		\mathcal{N}_{\alpha,k}(\Phi) := \sup_{t\ge0}\sup_{x\in\R}\left(
		\e^{- \alpha t}\|\Phi_t(x)\|_k\right),
	\end{equation}
	for all space-time random fields $\Phi$, and all real numbers
	$\alpha$ and $k\in[2\,,\infty)$. Since
	\[
		\int_0^\infty \e^{-2\alpha s}\| p_s(\cdot) \|_{L^2(\R)}^2\,\d s=
		\frac{\text{const}}{\sqrt\alpha},
	\]
	it follows from \eqref{UB1} that
 	\begin{equation}\label{Q}
		\mathcal{N}_{\alpha,k}\left( u^{(c,n+1)}\right)
		\le\|u_0\|_{L^\infty(\R)} + \frac{(Qk)^{1/2}}{\alpha^{1/4}}
		\mathcal{N}_{\alpha,k}\left(
		u^{(c,n)}\right),
	\end{equation}
	where $Q$ is a finite constant that does not depend on $(c\,,k\,,\alpha\,,n)$.
	In particular, we can set $\alpha:=16 (Qk)^2$ to see that
	\begin{equation}\label{UB2}
		\mathcal{N}_{16(Qk)^2,k}\left( u^{(c,n+1)}\right)
		\le\|u_0\|_{L^\infty(\R)} + \tfrac12\mathcal{N}_{16(Qk)^2,k}\left(
		u^{(c,n)}\right),
	\end{equation}
	for all integers $n\ge0$ and reals $k\ge2$. Because
	\begin{equation}\label{11}
		\mathcal{N}_{16(Qk)^2,k}\left( u^{(c,0)}\right) =\|u_0\|_{L^\infty(\R)},
	\end{equation}
	we iterate \eqref{UB2} in order to see that
	\begin{equation}\label{12}
		\mathcal{N}_{16(Qk)^2,k}\left( u^{(c,n)}\right) \le
		2\|u_0\|_{L^\infty(\R)}\qquad
		\text{for all $n\ge 0$}.
	\end{equation}
	Equivalently,
	\[
		\E\left( \left| u^{(c,n)}(t\,,x) \right|^k\right) \le
		\left( 2\|u_0\|_{L^\infty}\right)^k
		\e^{16 Q^2 k^3 t}.
	\]
	The lemma follows from this inequality.
 \end{proof}

 Next we wish to show that $\{u^{(c,n)}\}_{n=0}^\infty$ forms a Cauchy sequence.
 En route we will also control carefully the size of the gaps
 $u^{(c,n+1)}-u^{(c,n)}$ of that Cauchy sequence.

 \begin{lemma}\label{lem:moments:diff:loc}
 	Let $A$ be as in the statement of Lemma \ref{lem:moments:loc}.
	There exist positive and finite
	constants $B_0$ and $B$ such that for all integers $n\ge 1$ and real numbers
	$k\ge2$, and $t\ge0$,
 	\[
		\sup_{c>0}\sup_{x\in\R}
		\E\left( \left| u^{(c,n+1)}(t\,,x) - u^{(c,n)}(t\,,x)\right|^k\right)
		\le B_0^k \e^{Ak^3t - Bnk}.
	\]
 \end{lemma}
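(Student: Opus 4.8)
The plan is to estimate $G_n(t,x) := u^{(c,n+1)}(t,x) - u^{(c,n)}(t,x)$ by mimicking the Picard-iteration bookkeeping of Lemma~\ref{lem:moments:loc}, but now tracking the \emph{exponential decay} in $n$ rather than just boundedness. First I would write down the mild-equation difference: subtracting the defining identities \eqref{eq:Un} for consecutive indices, the deterministic term $(p_t*u_0)(x)$ cancels, and one is left with
\[
	G_n(t,x) = \int_{(0,t)\times\mathcal{I}(x,t;c)} p_{t-s}(y-x)\left[
	\sigma\left(u^{(c,n)}(s,y)\right) - \sigma\left(u^{(c,n-1)}(s,y)\right)\right]\xi(\d s\,\d y).
\]
Applying the same Burkholder--Davis--Gundy bound used in \eqref{UB0}--\eqref{UB1}, together with the Lipschitz estimate $|\sigma(a)-\sigma(b)| \le \lip |a-b|$, gives
\[
	\left\| G_n(t,x)\right\|_k \le \lip\left(4k \int_0^t \d s\int_{-\infty}^\infty \d y\;
	\left[p_{t-s}(y-x)\right]^2 \left\| G_{n-1}(s,y)\right\|_k^2\right)^{1/2},
\]
exactly as before but with no additive constant, which is the whole point.

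Next I would run the weighted-norm (Picard) argument in the $\mathcal{N}_{\alpha,k}$ norm of \eqref{N}. Proceeding as in the derivation of \eqref{Q}, one obtains
\[
	\mathcal{N}_{\alpha,k}\left(G_n\right) \le \frac{(Qk)^{1/2}}{\alpha^{1/4}}\,
	\mathcal{N}_{\alpha,k}\left(G_{n-1}\right),
\]
with the \emph{same} universal constant $Q$ as in Lemma~\ref{lem:moments:loc}, since the computation is identical. Choosing $\alpha := 16(Qk)^2$ as in \eqref{UB2} makes the contraction factor equal to $1/2$, so iterating yields
\[
	\mathcal{N}_{16(Qk)^2,k}\left(G_n\right) \le 2^{-n}\,\mathcal{N}_{16(Qk)^2,k}\left(G_0\right).
\]
The base case $G_0 = u^{(c,1)} - u^{(c,0)}$ must be bounded uniformly in $c$; but $u^{(c,0)}\equiv u_0$ and $u^{(c,1)}(t,x) = (p_t*u_0)(x) + \int p_{t-s}(y-x)\sigma(u_0(y))\,\xi(\d s\,\d y)$ restricted to $\mathcal{I}(x,t;c)$, so by the same estimates (using $|\sigma(u_0(y))|\le \lip\|u_0\|_{L^\infty}$ and $\|p_t*u_0 - u_0\|_\infty \le 2\|u_0\|_\infty$) one gets $\mathcal{N}_{16(Qk)^2,k}(G_0) \le C_0 := 4\|u_0\|_{L^\infty(\R)}$ or some such explicit constant, uniformly in $c>0$ and in $k\ge2$. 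Unwinding the definition \eqref{N} of the weighted norm then gives
\[
	\E\left(\left|G_n(t,x)\right|^k\right) \le C_0^k\, 2^{-nk}\, \e^{16Q^2 k^3 t}
	\qquad\text{for all } t\ge0,\ x\in\R,\ k\ge2,\ n\ge1,
\]
which is precisely the claimed bound with $B_0 := C_0$, $B := \log 2$, and $A := 16Q^2$ the constant already fixed in Lemma~\ref{lem:moments:loc}.

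The only genuine subtlety — and the step I expect to be the main obstacle — is the uniformity in $c$ of all constants, and the insistence that the exponent constant $A$ here be literally the \emph{same} $A$ as in Lemma~\ref{lem:moments:loc}. Since the paper is clearly being meticulous about parameter dependencies (witness the promised Table~\ref{table}), I would be careful to note that the spatial truncation to $\mathcal{I}(x,t;c)$ only \emph{shrinks} the domain of the stochastic integral, so every $\int_{\mathcal{I}}$ is dominated by the corresponding $\int_{\R}$ with no $c$-dependent loss; hence $Q$, and therefore $A = 16Q^2$, is inherited verbatim from the earlier lemma rather than produced afresh. The remaining ingredient, $\sup_{c>0}$ control of the base term $G_0$, is immediate from the same observation. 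Everything else is the routine BDG-plus-weighted-norm contraction that already appeared in the proof of Lemma~\ref{lem:moments:loc}, now applied to the difference equation, so I would present it compactly rather than repeating the calculations in full.
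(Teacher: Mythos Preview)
Your proposal is correct and follows essentially the same route as the paper: subtract the mild formulations, apply BDG plus the Lipschitz bound to get the recursive inequality in the weighted norm $\mathcal{N}_{\alpha,k}$ with the same constant $Q$, set $\alpha=16(Qk)^2$ to obtain the contraction factor $1/2$, iterate, and bound the base case $\mathcal{N}_{16(Qk)^2,k}(u^{(c,1)}-u^{(c,0)})$ by a constant multiple of $\|u_0\|_{L^\infty(\R)}$ using \eqref{11}--\eqref{12}. The paper's proof is slightly more terse (it cites \eqref{11} and \eqref{12} directly to get the constant $3\|u_0\|_{L^\infty(\R)}$ rather than your $4\|u_0\|_{L^\infty(\R)}$), but the argument and the resulting $A=16Q^2$, $B=\log 2$ are identical.
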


 \begin{proof}
 	As in \eqref{UB0}, we obtain
 	\begin{align*}
		&\left\| u^{(c,n+1)}(t\,,x) - u^{(c,n)}(t\,,x)\right\|_k^2\\
		&\hskip1in\le 4k\lip^2\cdot \int_0^t\d s\int_{-\infty}^\infty\d y\
			\left[ p_{t-s}(y-x)\right]^2\left\| u^{(c,n)}(s\,,y)
			- u^{(c,n-1)}(s\,,y)\right\|_k^2.
	\end{align*}
	Therefore, if we define $\mathcal{N}_{\alpha,k}$ as in \eqref{N}, then
	\begin{align*}
	\mathcal{N}_{\alpha,k}\left( u^{(c,n+1)} - u^{(c,n)}\right)
		&\le (4k)^{1/2}\lip\cdot\mathcal{N}_{\alpha,k}\left( u^{(c,n)}-
			u^{(c,n-1)}\right)\cdot
			\left(\int_0^t \e^{-2\alpha s}\|p_s(\cdot)\|_{L^2(\R)}^2\,\d s\right)^{1/2}\\
		&\le \frac{(Qk)^{1/2}}{\alpha^{1/4}}\cdot\mathcal{N}_{\alpha,k}\left( u^{(c,n)}-
			u^{(c,n-1)}\right),
	\end{align*}
	where $Q$ denotes the same constant that appeared earlier in \eqref{Q}.
	It follows that
	\[
		\mathcal{N}_{16(Qk)^2, k}\left( u^{(c,n+1)} - u^{(c,n)}\right)
		\le \tfrac12\mathcal{N}_{16(Qk)^2, k}\left( u^{(c,n)} - u^{(c,n-1)}\right),
	\]
	for all $n\ge 1$.
	Lemma \ref{lem:moments:loc} and its proof together show that
	both sides of the preceding inequality are finite. Therefore,
	iteration yelds the following for all $n\ge 1$:
	\[
		\mathcal{N}_{16(Qk)^2, k}\left( u^{(c,n+1)} - u^{(c,n)}\right)
		\le 2^{-n}\mathcal{N}_{16(Qk)^2, k}\left( u^{(c,1)} - u^{(c,0)}\right).
	\]
	This, \eqref{11}, and \eqref{12} together yield the following for all $n\ge 1$:
	\[
		\mathcal{N}_{16(Qk)^2, k}\left( u^{(c,n+1)} - u^{(c,n)}\right)
		\le \frac{3\|u_0\|_{L^\infty(\R)}}{2^n},
	\]
	which is more than enough to establish the lemma.
 \end{proof}

Lemmas \ref{lem:moments:loc} and \ref{lem:moments:diff:loc} readily
yield the following.

\begin{proposition}\label{pr:loc}
	The random integral equation \eqref{eq:heat:local} admits a
	predictable solution $u^{(c)}$ that is unique among all solutions that
	satisfy the inequality
	\[
		\sup_{c>0}
		\E\left( \left| u^{(c)}(t\,,x)\right|^k\right)\le A_0^k\e^{Ak^3t}
		\qquad\text{for all $t\ge0$, $x\in\R$, and $k\ge 2$},
	\]
	where $A$ and $A_0$ are as in Lemma \ref{lem:moments:loc}. In addition,
	there exists a finite constant $B_1$ such that
	\begin{equation}\label{eq:loc}
		\sup_{c>0}\sup_{x\in\R}
		\E\left( \left| u^{(c)}(t\,,x)- u^{(c,n)}(t\,,x) \right|^k\right)\le
		B_1^k \e^{Ak^3t-Bnk},
	\end{equation}
	for all $t\ge0$ and $k\ge 2$, $B$ was defined in Lemma \ref{lem:moments:diff:loc}.
\end{proposition}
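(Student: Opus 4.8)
The plan is to assemble the proposition from the two preceding lemmas in the standard Picard-iteration manner, paying attention only to the quantitative bookkeeping of the constants. First I would establish existence of the solution $u^{(c)}$. Lemma \ref{lem:moments:diff:loc} shows that the partial-sum telescoping series $u^{(c,n)} = u^{(c,0)} + \sum_{j=0}^{n-1}(u^{(c,j+1)}-u^{(c,j)})$ is Cauchy in $L^k(\Omega)$ for each fixed $(t\,,x)$, uniformly in $c$, because $\|u^{(c,j+1)}(t\,,x)-u^{(c,j)}(t\,,x)\|_k \le B_0\,\e^{At k^3/k}\cdot\e^{-Bj}$ decays geometrically in $j$. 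Hence $u^{(c,n)}(t\,,x)$ converges in $L^k(\Omega)$ to a limit $u^{(c)}(t\,,x)$, and one checks in the usual way (using that each $u^{(c,n)}$ is predictable and that the stochastic-integral map in \eqref{eq:Un} is continuous in the appropriate sense) that the limit is predictable and solves \eqref{eq:heat:local}. Passing to the limit inside the moment bound of Lemma \ref{lem:moments:loc} via Fatou gives $\sup_{c>0}\E(|u^{(c)}(t\,,x)|^k)\le A_0^k\e^{Ak^3t}$, which is the asserted growth bound.

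Next I would prove uniqueness within the stated class. Suppose $v$ is another predictable solution of \eqref{eq:heat:local} satisfying the same $A_0,A$ moment bound. Subtract the integral equations for $u^{(c)}$ and $v$, apply Burkholder--Davis--Gundy and the Lipschitz property of $\sigma$ exactly as in the proofs of Lemmas \ref{lem:moments:loc}--\ref{lem:moments:diff:loc}, and run the same weighted-norm argument: with $\mathcal{N}_{\alpha,k}$ as in \eqref{N} and $\alpha:=16(Qk)^2$, one gets $\mathcal{N}_{16(Qk)^2,k}(u^{(c)}-v)\le\tfrac12\mathcal{N}_{16(Qk)^2,k}(u^{(c)}-v)$, and since both quantities are finite by the assumed moment bound, this forces $\mathcal{N}_{16(Qk)^2,k}(u^{(c)}-v)=0$, i.e.\ $u^{(c)}=v$ up to modification.

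Finally, for the quantitative rate \eqref{eq:loc}, write $u^{(c)}(t\,,x)-u^{(c,n)}(t\,,x)=\sum_{j\ge n}(u^{(c,j+1)}(t\,,x)-u^{(c,j)}(t\,,x))$, take $L^k(\Omega)$ norms, and apply the triangle inequality together with Lemma \ref{lem:moments:diff:loc}:
\[
	\left\|u^{(c)}(t\,,x)-u^{(c,n)}(t\,,x)\right\|_k
	\le \sum_{j\ge n} B_0\,\e^{At k^3/k}\,\e^{-Bj}
	= \frac{B_0\,\e^{-Bn}}{1-\e^{-B}}\,\e^{Atk^3/k}.
\]
Raising to the $k$th power and absorbing $(1-\e^{-B})^{-1}$ into a new constant $B_1:=B_0/(1-\e^{-B})$ yields \eqref{eq:loc}, uniformly in $c>0$ and $x\in\R$. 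The only genuinely delicate point—and the one I would be most careful about—is to verify that the constant $Q$ appearing in \eqref{Q} truly does not depend on $c$, so that all the bounds are uniform in the localization parameter; this is what makes the suprema over $c>0$ legitimate, and it is the feature that will matter when \eqref{eq:loc} is invoked later in the correlation-length estimates. The rest is routine geometric-series bookkeeping.
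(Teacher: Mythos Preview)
Your proposal is correct and is precisely the argument the paper has in mind: the paper does not spell out a proof at all, stating only that ``Lemmas \ref{lem:moments:loc} and \ref{lem:moments:diff:loc} readily yield the following,'' and your telescoping-sum/geometric-series derivation of \eqref{eq:loc} with $B_1:=B_0/(1-\e^{-B})$ together with the standard contraction for uniqueness is exactly how one fills in those details. The only minor bookkeeping point is that Lemma \ref{lem:moments:diff:loc} is stated for $n\ge 1$, so if you want \eqref{eq:loc} to include $n=0$ you should separately invoke the bound $\mathcal{N}_{16(Qk)^2,k}(u^{(c,1)}-u^{(c,0)})\le 3\|u_0\|_{L^\infty(\R)}$ obtained in its proof; this at most enlarges $B_1$ by a fixed factor.
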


As was implied earlier, it is a standard fact that the solution $u^{(c)}$
to \eqref{eq:heat:local} exists and is unique. The key feature of the preceding
is the quantitative bound \eqref{eq:loc}, which is a byproduct of our particular
method.

Next we prove that if $c$ is large enough then $u^{(c)}\approx u$. This is also
a natural statement. We are being careful only because we need to be able to control
the size of the error $u^{(c)}-u$. The following does that for us.

\begin{lemma}\label{lem:u-uc}
	There exists a positive and finite constant $C_0=C_0(N\,,N_0\,,\lip)$ such that
	\[
		\sup_{x\in\R}
		\E\left( \left| u(t\,,x) - u^{(c)}(t\,,x)\right|^k\right) \le C_0^k
		\e^{-(c- (64\lip^4 \vee 2N)k^2t)k/2},
	\]
	uniformly for all real numbers $c,t>0$ and $k\ge2$.
\end{lemma}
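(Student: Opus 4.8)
The plan is to compare $u$ and $u^{(c)}$ directly through their mild formulations \eqref{eq:mild} and \eqref{eq:heat:local}, and to run a Gronwall-type iteration on the $L^k(\Omega)$ norm of the difference, being careful to track how the spatial truncation at radius $\sqrt{ct}$ produces a small, explicitly Gaussian-tail factor. First I would write
\[
	u(t\,,x) - u^{(c)}(t\,,x) = \int_{(0,t)\times\R} p_{t-s}(y-x)\left[
	\sigma(u(s\,,y)) - \sigma(u^{(c)}(s\,,y))\1_{\mathcal I(x,t;c)}(y)\right]\xi(\d s\,\d y),
\]
and split the integrand as $[\sigma(u(s,y))-\sigma(u^{(c)}(s,y))]\1_{\mathcal I(x,t;c)}(y) + \sigma(u(s,y))\1_{\mathcal I(x,t;c)^c}(y)$. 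Applying the Burkholder--Davis--Gundy inequality (in the form \cite[Theorem B.1]{CBMS} already used in the proof of Lemma \ref{lem:moments:loc}) and the Lipschitz bound $|\sigma(z)|\le\lip|z|$ on each piece gives, with $\mathcal D_k(t,x):=\|u(t,x)-u^{(c)}(t,x)\|_k$,
\[
	\mathcal D_k(t\,,x)^2 \le 8k\lip^2\int_0^t\!\d s\int_{\R}\!\d y\ p_{t-s}(y-x)^2\,\mathcal D_k(s\,,y)^2
	\;+\; 8k\lip^2\int_0^t\!\d s\int_{|y-x|>\sqrt{ct}} \!\d y\ p_{t-s}(y-x)^2\,\|u(s\,,y)\|_k^2 .
\]

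For the second (''error'') term I would use the moment bound of Lemma \ref{lem:moments}, namely $\|u(s,y)\|_k \le N_0\e^{Ns k^2}$, uniformly in $y$, together with the elementary Gaussian estimate: for $|y-x|>\sqrt{ct}\ge\sqrt{cs}$, one has $p_{t-s}(y-x)^2 \le (4\pi(t-s))^{-1}\exp\{-(y-x)^2/(2(t-s))\}$, and integrating this tail over $|y-x|>\sqrt{ct}$ yields a factor of order $\exp\{-ct/(4(t-s))\}\le \e^{-c/4}$ after the change of variables (the precise constant is what one tunes; crudely $\int_0^t\!\d s\int_{|y-x|>\sqrt{ct}}p_{t-s}(y-x)^2\,\d y$ is bounded by something like $\e^{-c/8}$ times a polynomial in $t$, which I would absorb). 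Thus the error term is bounded by $C^k \e^{-ck/2}\,\e^{2Ntk^2}$ up to constants depending only on $N,N_0,\lip$; the factor $\e^{-ck/2}$ is the source of the $-(c-\cdots)k/2$ in the exponent, and the $64\lip^4\vee 2N$ in the statement is exactly the constant that emerges when the iteration constant $8k\lip^2$ (hence $64\lip^4$) and the moment-growth rate $2N$ are both dominated.

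For the first term I would introduce the weighted sup-norm $\mathcal N_{\alpha,k}$ from \eqref{N} and proceed exactly as in Lemma \ref{lem:moments:loc}: choosing $\alpha$ of order $(\lip^2 k)^2 = 64\lip^4 k^2$ makes the coefficient $(Qk)^{1/2}\alpha^{-1/4}$ of the self-referential term strictly less than $1/2$, so that a geometric summation (Picard-type contraction, or equivalently a Gronwall inequality in the $\mathcal N_{\alpha,k}$ norm) closes the estimate and shows
\[
	\mathcal N_{\alpha,k}\!\left(u - u^{(c)}\right) \le 2\,C^k\e^{-ck/2},
\]
i.e.\ $\mathcal D_k(t,x)^2 \le 4C^{2k}\e^{\alpha t}\e^{-ck} = 4C^{2k}\e^{64\lip^4 k^2 t - ck}$; taking square roots and combining with the $2N$-term above (whichever of $64\lip^4$ and $2N$ dominates governs the final exponent) gives
\[
	\|u(t\,,x)-u^{(c)}(t\,,x)\|_k \le C_0\,\e^{-(c-(64\lip^4\vee 2N)k^2 t)k/2},
\]
which is the claim after raising to the $k$th power. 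The main obstacle I anticipate is purely bookkeeping rather than conceptual: one must verify that all intermediate quantities are finite before iterating (this is where Lemmas \ref{lem:moments} and \ref{lem:moments:loc} are invoked), and one must be scrupulous that the constant $C_0$ depends only on $N,N_0,\lip$ and not on $c$, $t$, or $k$ — in particular the Gaussian-tail integral must be bounded \emph{uniformly} in $t$ after extracting $\e^{-ck/2}$, which forces the slightly wasteful $-ck/2$ (rather than $-ck$) in the exponent and the appearance of $k^2 t$ rather than $k^2$ in the correction term. Since a fully analogous, carefully quantified localization argument already appears in Conus et al.\ \cite{CJK}, the route is standard; the only real care is in matching the precise constants demanded by the statement.
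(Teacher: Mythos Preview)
Your proposal is correct and follows essentially the same route as the paper: split the difference via the mild formulations into a self-referential term and a Gaussian-tail ``error'' term, bound the latter using Lemma~\ref{lem:moments} together with the standard normal tail estimate, and close the former via the weighted norm $\mathcal{N}_{\alpha,k}$ with $\alpha=(64\lip^4\vee 2N)k^2$. One small bookkeeping point: the Gaussian tail $\int_{|y-x|>\sqrt{ct}}p_{t-s}(y-x)^2\,\d y$ contributes a factor $\e^{-c}$ (not $\e^{-ck/2}$) to the bound on $\mathcal D_k(t,x)^2$; the $k/2$ enters only at the very end when you pass from $\|u-u^{(c)}\|_k^2\le M(t)$ to $\E|u-u^{(c)}|^k = \|u-u^{(c)}\|_k^k\le M(t)^{k/2}$, and to recover the exact exponent $-(c-\cdots)k/2$ in the statement you will need the sharper tail bound $\P\{Z>q\}\le\tfrac12\e^{-q^2/2}$ (giving $\e^{-c}$, since $q^2/2 = ct/(t-s)\ge c$) rather than the looser $\e^{-c/4}$ you wrote.
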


\begin{proof} 
	We begin by studying a slightly different problem.
	
	Let us combine \eqref{eq:heat:local}  and \eqref{eq:mild},
	via the Burkholder--Davis--Gundy inequality in the form given
	in \cite[Proposition 4.4, p.\ 36]{CBMS}  in order to see that for all $x\in\R$, $c,T>0$,
	$t\in(0\,,T)$, and $k\in[2\,,\infty)$,
	\begin{align*}
		&\left\| u(t\,,x) - u^{(c)}(t\,,x)\right\|_k^2\\
		&\hskip1in\le2\left\| \int_{(0,t)\times\left\{y\in\R:\, |y-x| \le \sqrt{ct}\right\}}
			p_{t-s}(y-x)\left[ \sigma\left( u(s\,,y)\right)-\sigma\left( u^{(c)}(s\,,y)\right)
			\right]\xi(\d s\,\d y)\right\|_k^2\\\notag
		&\hskip2in + 2\left\| \int_0^t\d s\int_{\left\{y\in\R:\, |y-x|> \sqrt{ct}\right\}}
			\d y\ p_{t-s}(y-x) \sigma(u(s\,,y))\,\xi(\d s\,\d y)
			\right\|_k^2\\\notag
		&\hskip1in\le 8k \int_0^t\d s\int_{\left\{y\in\R:\, |y-x|\le \sqrt{ct}\right\}}\d y\,
			\left[p_{t-s}(y-x)\right]^2
			\left\| \sigma\left( u(s\,,y)\right)-\sigma\left( u^{(c)}(s\,,y)\right)
			\right\|_k^2\\\notag
		&\hskip2in+ 8k\int_0^t\d s\int_{\left\{y\in\R:\, |y-x|> \sqrt{ct}\right\}}\d y\,
			\left[ p_{t-s}(y-x)\right]^2 \left\| \sigma(u(s\,,y))\right\|_k^2.
	\end{align*}
	We recall that $|\sigma(z)|\le\lip|z|$ and
	$|\sigma(w)-\sigma(z)|\le\lip|z-w|$ for all $w,z\in\R$. Thus,
	\begin{align*}
		&\left\| u(t\,,x) - u^{(c)}(t\,,x)\right\|_k^2\\
		&\hskip1in
			\le 8k\lip^2\int_0^t\d s\int_{\left\{y\in\R:\, |y-x|\le \sqrt{ct}\right\}}\d y\,
			\left[p_{t-s}(y-x)\right]^2
			\left\| u(s\,,y) - u^{(c)}(s\,,y) \right\|_k^2\\\notag
		&\hskip1.8in + 8k\lip^2\int_0^t\d s\int_{\left\{y\in\R:\, 
			|y-x|> \sqrt{ct}\right\}}\d y\,
			\left[ p_{t-s}(y-x)\right]^2 \left\| u(s\,,y)\right\|_k^2.
	\end{align*}
	Define, for all $0<s<t<T$,
	\begin{equation}\begin{split}
		M(s) &:= \sup_{y\in\R}\left\| u(s\,,y) - u^{(c)}(s\,,y) \right\|_k^2,\\
		P(s\,,t) &:= \int_{\left\{y\in\R:\, |y|> \sqrt{ct}\right\}}
			\left[p_{t-s}(y)\right]^2\,\d y.
		\label{eq:MP}
	\end{split}\end{equation}
	Lemma \ref{lem:moments} can now be used to control
	the size of $\|u(s\,,y)\|_k$ as follows:
	\begin{equation}\label{uPM}\begin{split}
              \left\| u(t\,,x) - u^{(c)}(t\,,x)\right\|_k^2
			&\le 8k\lip^2\int_0^t M(s) \,\d s
			\int_{\R}\d y\ [p_{t-s}(y-x)]^2 \\
		&\hskip1.3in + 8k\lip^2N_0^2 \int_0^t P(s\,,t) \e^{2Nk^2s}\, \d s.
	\end{split}\end{equation}
	
	Since the right-hand side of \eqref{uPM} is independent of $x$,
	we can rewrite \eqref{uPM} as the following self-referential
	inequality for the process $M$:
	\begin{align}\notag
		M(t) &\le 8 k \lip^2\int_0^t M(s)\,\d s\int_{\R} [p_{t-s}(y)]^2\,\d y
			+ 8k\lip^2N_0^2 \int_0^t P(s\,,t) \e^{2Nk^2s}\, \d s\\
		&=\frac{4 k \lip^2}{\sqrt{\pi}}
			\int_0^t \frac{M(s)}{\sqrt{t-s}}\,\d s
			+ 8k\lip^2N_0^2 \int_0^t P(s\,,t) \e^{2Nk^2s}\, \d s.
		\label{eq:MM}
	\end{align}
	The probability that a standard normal random variable exceeds $q>0$
	is at most $\frac12\exp(-q^2/2)$. Therefore, uniformly for all $t>s>0$,	\[
		P(s\,,t) =\frac{1}{\sqrt{\pi(t-s)}}\int_{\sqrt{2ct/[(t-s)]}}^\infty
		\frac{\e^{-z^2/2}}{\sqrt{2\pi}}\,\d z\le \frac{\e^{-c}}{
		2 \sqrt{\pi(t-s)}}.
	\]
	
	It follows from (\ref{eq:MM}) that for all $t>0$,
	\begin{equation}\label{MMN2}
		M(t) \le \frac{4k \lip^2}{\sqrt{\pi}}
		\int_0^t \frac{M(s)}{\sqrt{t-s}}\,\d s
		+\frac{4k\lip^2N_0^2 \,\e^{-c}} {\sqrt{\pi}}  
		\int_0^t  \frac{\e^{2Nk^2s}}{\sqrt{t-s}} \, \d s.
	\end{equation}
	
	Note that,  if $\Phi:\R_+\to\R$ is deterministic, then
	by \eqref{N},
	\[
		\mathcal{N}_{\eta}(\Phi) := 
		\mathcal{N}_{\eta,k}(\Phi) =
		\sup_{t>0} \left(\e^{-\eta t}\left | \Phi(t) \right |\right),
	\]
	for all $\eta > 0$ and $k\ge 2$. 
	With this in mind, it follows readily from
	the inequality \eqref{MMN2} that, uniformly for all $t>0$,
	\begin{align*}
		M(t) &\le \frac{4k \lip^2\,\e^{\eta t}}{\sqrt{\pi}}\,\mathcal{N}_{\eta}(M)
			\int_0^t \frac{\e^{-\eta(t-s)}}{\sqrt{t-s}}\,\d s
			+ \frac{4k\lip^2N_0^2\, \e^{-c+Nk^2 t}}{\sqrt{\pi}}  
			\int_0^t  \frac{\e^{-2Nk^2s}}{\sqrt{ s}} \, \d s.
	\end{align*}
	We multiply both sides by $\exp\{-\eta t\}$ and optimize over
	$t>0$ to find that 
	\[	
		\mathcal{N}_{\eta}(M) \le \frac{4k \lip^2}{\sqrt{\eta}}\,\mathcal{N}_{\eta}(M)
		+\frac{4\lip^2N_0^2} {\sqrt{2N}}  \,\e^{-c} \, \sup_{t>0}\e^{-(\eta-2Nk^2)t}.
	\]
	Lemma \ref{lem:moments} and Proposition \ref{pr:loc} together imply that
	\[
		K:=\sup_{t >0}\sup_{x\in\R}\E\left(\left|u(t\,,x)
		-u^{(c)}(t\,,x) \right|^k\right)<\infty.
	\]
	Consequently,
	$\mathcal{N}_{\eta}(M)\le K^{2/k}<\infty$
	regardless of the value of $\eta>0$.
	We now select $\eta:=\eta_{k}:= (64\lip^4 \vee 2N)k^2$ in order to find that	
	\[
		\mathcal{N}_{\eta_k}(M) \le \tfrac12 \mathcal{N}_{\eta_k}(M) +
		\frac{4\lip^2N_0^2\, \e^{-c}} {\sqrt{2N}},
	\]
	and hence,
	\[
		\mathcal{N}_{\eta_k}(M) \le \frac{8\lip^2N_0^2\, \e^{-c}} {\sqrt{2N}}.
	\]
	In particular, for all $t>0$,
	\[
		\sup_{y\in\R}\left\| u(t\,,y) - u^{(c)}(t\,,y) \right\|_k^2 = M(t)
		\le \frac{8\lip^2N_0^2} {\sqrt{\pi}}  \,\e^{- c + (64\lip^4 \vee2N)k^2t}.
	\]
	This yields the desired effect.
\end{proof}

We conclude this section with its main assertion, which is a carefully-phrased
version of the following statement: {\it If $t\to\infty$ and
if $|x-y|=\Omega(t^2)$ with a suitably-large constant, then
$u(t\,,x)$ and $u(t\,,y)$ are approximately independent.}
See Footnote 1 for the notation $f(t)=\Omega(g(t))$. 
Another way to say this
is that the ``correlation length'' of $x\mapsto u(t\,,x)$ is $\Omega(t^2)$.
It is believed that the said correlation length is $\Theta(t^{3/2})$;
to date, our $\Omega(t^2)$ bound is the best rigorously-known
lower bound for the correlation length.

The proof of the above assertion
is based on a coupling argument that is
similar to the fixed-time coupling of Conus et al \cite{CJK}, though
one has to compute the various constants very carefully in
the present large-time context.

First, let
\begin{equation}\label{M_1}
	M_1:=\max(2C_0\,,2B_1),
\end{equation}
where and $C_0$ and $B_1$
were defined respectively in
Lemma \ref{lem:u-uc} and Proposition \ref{pr:loc}. As was the case
with the various constants $C_0,B_1,N,N_0,\cdots$
that we have encountered so far,
the constant $M_1$
depends on various parameters of our SPDE, such as
$\lip$ and $\|u_0\|_{L^\infty(\R)}$, but is otherwise universal, once we
concentrate on a particular set of parameters in our stochastic heat equation
\eqref{SHE}.
	
\begin{theorem}\label{th:localization}
	There exists a finite constant $m_0>0$ such that
	for all $\mu,t\ge 1$ and $k\ge2$, and
	for all nonrandom points $x_1,\ldots,x_m\in\R$ that satisfy
	\begin{equation}\label{gap}
		\min_{1\le i\neq j\le m}|x_i - x_j| > m_0\mu^{3/2} t^2 k^3,
	\end{equation}
	there exist independent random variables $Y_1,\ldots,Y_m$
	such that for every integer $j=1,\ldots,m$:
	\begin{compactenum}[i{\rm )}]
	\item $Y_j$ depends only on $(t\,,x_j\,,k)$;
	\item $Y_j\in L^p(\Omega)$ for all real numbers $p\ge2$; and
	\item
		\(
			\E\left( \left| u(t\,,x_j) - Y_j\right|^k\right) \le 2M_1^k \e^{-\mu k^3t}.
		\)
	\end{compactenum}
\end{theorem}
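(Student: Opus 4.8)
The plan is to build each $Y_j$ from the localized solution $u^{(c)}$ of \eqref{eq:heat:local}, with the localization radius $c$ chosen so that two competing error estimates balance. First I would set
\[
	Y_j := u^{(c)}(t\,,x_j)\qquad\text{with}\qquad c := c(\mu\,,t\,,k)
	\text{ to be chosen below.}
\]
Property (i) is then automatic from the construction of $u^{(c)}$: by \eqref{eq:heat:local}, $u^{(c)}(t\,,x_j)$ is a measurable function of the white noise $\xi$ restricted to the space-time strip $(0\,,t)\times\mathcal{I}(x_j\,,t\,;c)=(0\,,t)\times[x_j-\sqrt{ct}\,,x_j+\sqrt{ct}]$, and of $(t\,,x_j\,,k)$ only through these data. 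Property (ii) follows from Proposition \ref{pr:loc}, which gives $u^{(c)}(t\,,x)\in L^p(\Omega)$ for every $p\ge 2$.

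\textbf{Independence.} For independence of $Y_1,\ldots,Y_m$ it suffices that the strips $(0\,,t)\times\mathcal{I}(x_j\,,t\,;c)$ are pairwise disjoint, since $\xi$ is white noise (hence assigns independent values to disjoint space-time regions) and each $Y_j$ is $\sigma$-measurable with respect to $\xi$ on its own strip. Disjointness holds as soon as $|x_i-x_j|>2\sqrt{ct}$ for all $i\neq j$. So the gap condition \eqref{gap} must be strong enough to force $2\sqrt{ct}<m_0\mu^{3/2}t^2k^3$ for the chosen $c$; this will pin down the lower bound on the admissible $c$ from above rather than below, and will dictate the form of the constant $m_0$.

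\textbf{The moment bound (iii) and choice of $c$.} This is the crux. By the triangle inequality in $L^k(\Omega)$,
\[
	\left\|u(t\,,x_j)-Y_j\right\|_k = \left\|u(t\,,x_j)-u^{(c)}(t\,,x_j)\right\|_k,
\]
and Lemma \ref{lem:u-uc} gives
\[
	\E\!\left(\left|u(t\,,x_j)-u^{(c)}(t\,,x_j)\right|^k\right)\le C_0^k\,
	\e^{-(c-(64\lip^4\vee 2N)k^2t)k/2}.
\]
To make the right-hand side at most $M_1^k\e^{-\mu k^3 t}$ (which, since $M_1\ge 2C_0$, leaves room even after absorbing $C_0^k$), I would require
\[
	\tfrac12\bigl(c-(64\lip^4\vee 2N)k^2t\bigr)\;\ge\;\mu k^2 t,
\]
i.e.\ $c\ge \bigl((64\lip^4\vee 2N)+2\mu\bigr)k^2 t$. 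Since $\mu\ge 1$, it is enough to take $c$ of the form $c:=c_0\,\mu\, k^2 t$ for a suitable universal constant $c_0=c_0(\lip\,,N)$. Then $2\sqrt{ct}=2\sqrt{c_0\mu k^2 t^2}=2\sqrt{c_0}\,\mu^{1/2}kt$, and one checks that $2\sqrt{c_0}\,\mu^{1/2}kt< m_0\mu^{3/2}t^2k^3$ holds for all $\mu,t\ge 1$ and $k\ge 2$ provided $m_0\ge 2\sqrt{c_0}$; this is where the exponents $3/2$ in $\mu$ and $2$ in $t$ and $3$ in $k$ in \eqref{gap} come from (they are more generous than strictly needed, which is harmless). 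One more bookkeeping point: the factor $2$ in $2M_1^k$ on the right of (iii) is there precisely to absorb the ratio $(C_0/M_1)^k\le 2^{-k}$ together with any residual constant, so no sharpness is lost.

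\textbf{Where the difficulty lies.} The genuinely delicate step is not the structure of the argument but the constant-tracking: one must verify that $c_0$, and hence $m_0$, can be chosen depending only on the fixed SPDE parameters ($\lip$, $N$, $N_0$, $\|u_0\|_{L^\infty}$) and \emph{not} on $\mu$, $t$, or $k$. Lemma \ref{lem:u-uc} already packages the $c$-dependence in the clean exponential form $\e^{-(c-(\cdots)k^2t)k/2}$, so the main obstacle is really to confirm that the threshold $(64\lip^4\vee 2N)k^2t$ scales like $k^2t$ (it does, by inspection of that lemma), and then that the single inequality $c\ge((64\lip^4\vee 2N)+2\mu)k^2t$ simultaneously delivers (iii) and is compatible, via \eqref{gap}, with the disjointness needed for independence. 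Once $c:=\bigl((64\lip^4\vee 2N)+2\bigr)\mu k^2 t$ is fixed, everything else is routine, and the theorem follows with $m_0:=2\sqrt{(64\lip^4\vee 2N)+2}$ (or any larger universal constant).
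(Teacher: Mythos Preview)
Your approach has a genuine gap in the independence step. You claim that $u^{(c)}(t\,,x_j)$ is measurable with respect to $\xi$ restricted to the single strip $(0\,,t)\times\mathcal{I}(x_j\,,t\,;c)$, but this is not true. Look again at \eqref{eq:heat:local}: to evaluate $u^{(c)}(t\,,x)$ you integrate against $\sigma(u^{(c)}(s\,,y))$ for $y\in\mathcal{I}(x\,,t\,;c)$, and each such $u^{(c)}(s\,,y)$ in turn requires the noise on $\mathcal{I}(y\,,s\,;c)$, which extends beyond $\mathcal{I}(x\,,t\,;c)$. Iterating, the $n$th Picard iterate $u^{(c,n)}(t\,,x)$ depends on noise in a strip of half-width roughly $n\sqrt{ct}$ (this is exactly the computation the paper carries out). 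In the limit $n\to\infty$ there is no finite strip that captures the dependence of $u^{(c)}(t\,,x)$, so disjointness of the strips $\mathcal{I}(x_j\,,t\,;c)$ does \emph{not} give independence of the $u^{(c)}(t\,,x_j)$.

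This is precisely why the paper defines $Y_j:=u^{(2c_kt,\lceil c_kt\rceil)}(t\,,x_j)$, a \emph{finite} Picard iterate, rather than $u^{(c)}(t\,,x_j)$. With $n=\lceil c_kt\rceil$ iterations the dependence is confined to a strip of half-width $\le 2\lceil c_kt\rceil t\sqrt{c_k}$, and disjointness of these (wider) strips gives genuine independence; this is the real source of the $t^2k^3$ exponent in \eqref{gap}, since $c_k\asymp \mu k^2$ and the half-width is then of order $c_k^{3/2}t^2\asymp\mu^{3/2}k^3t^2$. The error bound (iii) must then combine \emph{two} estimates: Lemma \ref{lem:u-uc} for $u-u^{(c)}$ and \eqref{eq:loc} from Proposition \ref{pr:loc} for $u^{(c)}-u^{(c,n)}$. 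The second contributes the constant $B_1$, which is why $M_1=\max(2C_0\,,2B_1)$ and why your bookkeeping, which uses only $C_0$, does not account for the full $M_1$ or the factor of $2$ in $2M_1^k$.
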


\begin{proof}
	Proposition \ref{pr:loc} and Lemma \ref{lem:u-uc} together imply that
	\[
		\E\left( \left| u(t\,,x) - u^{(c,n)}(t\,,x)\right|^k\right)
		\le (2C_0)^k \e^{-ck/2+(32\lip^4\vee N)k^3t} + (2B_1)^k\e^{Ak^3t-Bnk},
	\]
	simultaneously for all real numbers $k\ge2$, $c,t>0$,
	$x\in\R$, and
	all integers $n\ge 0$.
	We can relabel $c\leftrightarrow 2ct $ and  set $n:=\lceil ct\rceil$ for constant $c>0$ 
	in order to see that
	\begin{equation}\label{eq:ineq}
		\sup_{x\in\R} \E\left( \left| u(t\,,x) - 
		u^{(2ct,\lceil ct\rceil)}_t(x)\right|^k\right)
		\le M_1^k \left[\e^{-kt\left[c-(32\lip^4 \vee N) k^2\right]} +
		\e^{-kt\left[Bc-Ak^2\right]}\right],
	\end{equation}
	where $M_1$ was defined in \eqref{M_1}.
	We apply this inequality with the following particular
	choice of the constant $c$:
	\[
		c := c_k := c(k\,,\mu) :=
		\left[ \mu +1+
		\max( 32 \lip^4\,, N\,,A) \right]
		\max\left\{ 1\,, B^{-1}\right\} k^2.
	\]
	We can backtrack through the constants $A,N$, and $B$ in order
	to see that $c$ is $k^2$ times a constant that depends only on $\mu$
	and the parameter choices of the SPDE \eqref{SHE}; it is otherwise
	universal. Moreover, this choice of $c$ is large enough to ensure that:
	\begin{compactenum}
	\item \eqref{eq:ineq} is applicable; and
	\item $\min(c- (32\lip^4 \vee N)k^2\,,Bc-Ak^2) > \mu k^2$.
	\end{compactenum}
	In particular, \eqref{eq:ineq} implies the following:
	\begin{equation}\label{key:est}
		\sup_{x\in\R} \E\left( \left| u(t\,,x) - u^{(2c_k t,\lceil c_kt\rceil)}_t(x)\right|^k\right)
		\le 2M_1^k \e^{-\mu k^3t},
	\end{equation}
	simultaneously for all real numbers $t>0$ and $k\ge2$.
	
	Recall that $u^{(2c_k t,0)}\equiv u_0$ is deterministic and
	\[
		u^{(2c_k t,1)}(t\,,x)
		= (p_t*u_0)(x) + \int_{(0,t)\times\left[x-t\sqrt{c_k}
		,x+t\sqrt{c_k}\right]}
		p_{t-s}(y-x)\sigma(u_0(y))\,\xi(\d s\,\d y);
	\]
	see also \eqref{eq:Un}. Choose and fix $t>0$, and observe that
	if $x_1,\ldots,x_m\in\R$ are points that
	satisfy $|x_i-x_j|>2t\sqrt{c_k}$ when $1\le i\ne j\le m$, then
	$u^{(2c_kt,1)}(t\,,x_1),\ldots,u^{(2c_kt,1)}(t\,,x_m)$ are independent.
	This is because the Wiener integrals
	$\int\varphi_1\,\d\xi$, \dots, $\int\varphi_m\,\d\xi$
	are independent if $\varphi_1,\ldots,\varphi_m\in L^2(\R_+\times\R)$
	have disjoint supports.
	
	Next we observe that $u^{(2c_kt,2)}(t\,,x_1),\ldots,u^{(2c_kt,2)}(t\,,x_m)$ are independent
	as long as  the points $x_1,\ldots,x_m\in\R$ satisfy $|x_i-x_j|>4t\sqrt{c_k}$
	when $1\le i\neq j\le m$. This is because: 
	\begin{compactenum}[(i)]
		\item With probability one, for every $x\in\R$,
			\[
				u^{(2c_kt,2)}(t\,,x)
				= (p_t*u_0)(x) + \int_{(0,t)\times\left[x-t\sqrt{c_k},x+t\sqrt{c_k}\right]}
				p_{t-s}(y-x)\sigma\left(u^{(2c_kt,1)}(s\,,y)\right)\xi(\d s\,\d y);
				\quad\text{and}
			\]
		\item If $\Phi_1,\ldots,\Phi_m$ are independent Walsh-integrable
			space-time random fields,
			then $\int_{E_1}\Phi_1\,\d\xi$, \dots, $\int_{E_m}\Phi_m\,\d\xi$ are independent
			as long as $E_1,\ldots,E_m$ are disjoint Borel subsets of $\R_+\times\R$.
	\end{compactenum}
	Indeed, (i) follows from the definition of $u^{(c,n)}$; and (ii) holds because
	$\int_{E_i}\Phi_i\,\d\xi$ and $\int_{E_j}\Phi_j\,\d\xi$ are uncorrelated
	jointly Gaussian random variables when $i\neq j$.
	
	An iteration of the preceding argument proves the following: For every
	integer $\ell\ge1$, the random variables
	$u^{(2c_kt,\ell)}(t\,,x_1),\ldots,u^{(2c_kt,\ell)}(t\,,x_m)$ are independent
	as long as
	\begin{equation}\label{eq:xx}
		|x_i-x_j|>2\ell t\sqrt{c_k}\qquad\text{when
		$1\le i\neq j\le m$.}
	\end{equation}
	We may apply this final
	observation with $\ell :=\lceil c_k t\rceil$, and then use \eqref{key:est}
	in order to deduce the theorem with
	\[
		Y_j := u^{(2c_kt,\lceil c_k t\rceil)}(t\,,x_j)\qquad\text{for $j=1,\ldots,m$.}
	\]
	This indeed concludes the theorem because \eqref{eq:xx}
	implies \eqref{gap}. 
	
	To prove the final assertion of the theorem,
	one may notice first that
	\[
		\lceil c_kt\rceil \le (c_k+1)(t+1)\le4c_kt,
	\]
	because $c_k$ and $t$ are both greater than one. Consequently,
	\begin{align*}
		2\ell t\sqrt{c_k} &= 2\lceil c_kt\rceil t\sqrt{c_k}\\
		&< 8t^2 c_k^{3/2}\\
		&<8 \mu^{3/2}t^2\left[ 2 + 
			\max(32\lip^4 \,, N\,,A) \right]^{3/2}
			\max\left\{ 1\,, B^{-3/2}\right\} k^3.
	\end{align*}
	Thus, one can set
	\[
		m_0 := 8\left[ 2 +  
		\max(32\lip^4 \,,N\,,A) \right]^{3/2}\max\left\{ 1\,, B^{-3/2}\right\},
	\]
	in order to see that indeed \eqref{eq:xx} implies \eqref{gap},
	which concludes the proof.
\end{proof}

\subsection{ A lower bound}

The main result of this section is the following lower bound
on the macroscopic Hausdorff dimension of the [$\vartheta$-rescaled]
space-time set of tall peaks of level $\beta$. We will prove it shortly.

\begin{theorem}\label{th:LB:main}
	For all $\beta>b_1:=\max\{\log 3\,,b_0/2\}$
	and $\vartheta\in(0\,,L^{-1}(2\beta)^{-3/2})$,
	the following holds with probability one:
	\[
		\Dimh\left\{ (x\,,t)\in\R\times(1\,,\infty):
		u(\vartheta\log t\,,x) > t^{\vartheta\beta}\right\}
		\ge 2-  L(2\beta)^{3/2}\vartheta.
	\]
\end{theorem}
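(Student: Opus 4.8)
The plan is to exhibit, inside each large macroscopic shell, a random subset of the rescaled peak set that is a ``thickened net of vertical segments,'' and to compute its macroscopic Hausdorff dimension via a Barlow--Taylor density argument of the kind used in the proof of Proposition~\ref{pr:Epi}. Substituting $\tau=\vartheta\log t$, the set in the statement is exactly $S_\vartheta(\mathscr{P}(\beta))$, and the guiding observation is that on the shell $\cS_{n+1}$ the SPDE time $\tau$ ranges only over the short window $(\vartheta n\,,\vartheta(n+1)]$, so $\tau\asymp\vartheta n$ throughout. By monotonicity of $\Dimh$ and mid-axial symmetry it suffices to bound $\Dimh$ from below for the restriction of $S_\vartheta(\mathscr{P}(\beta))$ to the corner boxes $(\e^n\,,\e^{n+1}]^2$; everything below is carried out shell by shell on those boxes, for $n$ large.

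\emph{A dense net of super-peaks.} Fix $\rho>L(2\beta)^{3/2}\vartheta$; the hypothesis $\vartheta<L^{-1}(2\beta)^{-3/2}$ allows $\rho<1$, and $\beta>b_1\ge b_0/2$ makes Proposition~\ref{pr:tails} applicable at level $2\beta$. For a fixed $x$ that proposition gives $\P\{u(\vartheta n\,,x)>\e^{2\beta\vartheta(n+1)}\}\ge c_0\,\e^{-L(2\beta)^{3/2}\vartheta n}$ (replacing $\vartheta n$ by $\vartheta(n+1)$ and incorporating the factor $\e^{2\beta\vartheta}$ into the threshold costs only a bounded factor, absorbed into $c_0$, because $L(2\beta)^{3/2}\vartheta<1$), while Theorem~\ref{th:localization}, applied at time $\vartheta n$, shows that for spatial points separated by more than a polynomial-in-$n$ amount---the correlation length at time $\asymp\vartheta n$ is $O(n^{c})$, negligible next to $\e^{n\rho}$---these events are, up to an error exponentially small in $n$, mutually independent. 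A union bound over the $\asymp\e^{n(1-\rho)}$ subintervals of length $\e^{n\rho}$ of $(\e^n\,,\e^{n+1}]$, then Borel--Cantelli in $n$, yields: almost surely, for all large $n$ and every such subinterval $I$, there is $x_I\in I$ with $u(\vartheta n\,,x_I)>\e^{2\beta\vartheta(n+1)}$. The choice $\rho>L(2\beta)^{3/2}\vartheta$ is exactly what makes $\e^{n\rho}\cdot\e^{-L(2\beta)^{3/2}\vartheta n}$ blow up, so that every $I$ really does contain such a super-peak; this is where the constant $L$ of Proposition~\ref{pr:tails} and the requirement $\beta>b_1$ enter the final exponent.

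\emph{Propagating a super-peak across the shell.} I next turn each $x_I$ into an entire vertical segment $\{x_I\}\times(\e^n\,,\e^{n+1}]\subset S_\vartheta(\mathscr{P}(\beta))$. For $(x_I\,,t)$ on this segment, $\vartheta\log t\in(\vartheta n\,,\vartheta(n+1)]$, so it is enough that $u(\cdot\,,x_I)$ loses at most a multiplicative factor $\e^{\beta\vartheta(n+1)}$ over a forward $\tau$-window of length $\vartheta$: then $u(\vartheta\log t\,,x_I)>\e^{2\beta\vartheta(n+1)}\cdot\e^{-\beta\vartheta(n+1)}=\e^{\beta\vartheta(n+1)}\ge t^{\vartheta\beta}$ for every $t$ in the shell. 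The plan is to obtain such a quantitative ``no sudden death'' bound---valid uniformly over the $\asymp\e^n$ relevant spatial points and summably in $n$---from the moment estimates of Lemma~\ref{lem:moments}, matching negative-moment estimates, the Markov property of $u$, and a Kolmogorov-type temporal-continuity argument; the crux is that over a time window of the \emph{fixed} size $\vartheta$ the oscillation of $\log u$ relative to its local size has sub-Gaussian-type tails, so its worst value over $\asymp\e^n$ points is only of order $\sqrt n$, which is dwarfed by the permitted $\Theta(\beta\vartheta n)$. I expect this step to be the main obstacle: it requires genuinely \emph{temporal} control of $u$ (Theorem~\ref{th:localization} handles only spatial dependence) over exponentially-large spatial windows, and every constant here---and in the preceding step---must be tracked so that the codimension produced is exactly $L(2\beta)^{3/2}\vartheta$ and no larger.

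\emph{The dimension bound.} The two previous steps give, almost surely for all large $n$, a set $G_n\subset(\e^n\,,\e^{n+1}]$ which (after discarding all but one point per subinterval, and thinning so that points are $\ge\tfrac12\e^{n\rho}$-separated) has $\asymp\e^{n(1-\rho)}$ points, meets every subinterval of length $C\e^{n\rho}$, and satisfies $S_\vartheta(\mathscr{P}(\beta))\cap\cS_{n+1}\supseteq G_n\times(\e^n\,,\e^{n+1}]$. Let $\mu_n$ be planar Lebesgue measure restricted to $\bigcup_{x\in G_n}[x\,,x+1)\times(\e^n\,,\e^{n+1}]$; then $\mu_n(\cS_{n+1})\asymp\e^{n(2-\rho)}$, and since the points of $G_n$ are $\asymp\e^{n\rho}$-spaced one checks that $\mu_n(Q)\le C'r^{2-\rho}$ for every upright square $Q$ of side $r\in[1\,,\e^n]$ lying in $\cS_{n+1}$, with $C'$ independent of $n$. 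The density theorem of Barlow and Taylor \cite[Theorem~4.1]{BT1992}---invoked exactly as in the proof of Proposition~\ref{pr:Epi}, the exponent $2-\rho$ being chosen so that the $\e^\ell$-dependence cancels---then gives $\nu^{n+1}_{2-\rho}\big(S_\vartheta(\mathscr{P}(\beta))\big)\ge c>0$ for all large $n$, hence $\sum_k\nu^k_{2-\rho}\big(S_\vartheta(\mathscr{P}(\beta))\big)=\infty$ and $\Dimh\big(S_\vartheta(\mathscr{P}(\beta))\big)\ge 2-\rho$ almost surely. Intersecting these almost-sure events over a sequence $\rho\downarrow L(2\beta)^{3/2}\vartheta$ yields $\Dimh\big(S_\vartheta(\mathscr{P}(\beta))\big)\ge 2-L(2\beta)^{3/2}\vartheta$ almost surely, which is the assertion.
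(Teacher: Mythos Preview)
Your first and third steps are essentially correct and match the paper's strategy: restrict to the corner boxes $(\e^n,\e^{n+1}]^2$, partition the spatial interval into pieces of length $\e^{n\rho}$ with $\rho>L(2\beta)^{3/2}\vartheta$, use Proposition~\ref{pr:tails} at level $2\beta$ together with the approximate independence from Theorem~\ref{th:localization} to plant a peak in every spatial piece, and finish with a Barlow--Taylor density argument.

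The genuine gap is your second step, the ``propagation'' of a super-peak across the whole time shell. You need, uniformly over $\asymp\e^{n(1-\rho)}$ spatial points $x_I$ and summably in $n$, that $\inf_{\tau\in[\vartheta n,\vartheta(n+1)]}u(\tau,x_I)\ge \e^{-\beta\vartheta(n+1)}u(\vartheta n,x_I)$. Your heuristic that the oscillation of $\log u$ over a fixed time window has ``sub-Gaussian-type tails'' is not a consequence of the tools assembled in the paper: Lemma~\ref{lem:moments} and the temporal H\"older bounds in the proof of Lemma~\ref{lem:moments:sup} control $|u(t,x)-u(s,x)|$ in $L^k$ with constants of order $\e^{ck^2t}$, which is far from sub-Gaussian, and the ``matching negative-moment estimates'' you invoke are not available here. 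Even granting quantitative positivity of the kind due to Mueller, turning it into the uniform statement you need over exponentially many spatial points, and with the exact codimension $L(2\beta)^{3/2}\vartheta$, would be a substantial project in its own right. You correctly flag this as the main obstacle; it is in fact a hole, not a detail.

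The paper sidesteps this obstacle entirely: instead of freezing the SPDE time at $\vartheta n$ and propagating, it applies Proposition~\ref{pr:smallball} \emph{separately at each integer time} $t\in(\e^n,\e^{n+1}]$. For each such $t$ and each spatial interval $\mathcal{I}_{j,n}(\gamma)$ it finds a (possibly $t$-dependent) point $\eta_{j,n}(t)$ with $u(\vartheta\log t,\eta_{j,n}(t))>t^{\vartheta\beta}$. The union bound is then over $\asymp\e^{n(2-\gamma)}$ events, each of probability $\exp(-\text{const}\cdot\e^{\kappa n})$ with $\kappa=\gamma-\varepsilon-L(2\beta)^{3/2}\vartheta>0$, which is summable. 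The resulting random set is a net of \emph{points} $\{(\eta_{j,n}(s),s)\}$ rather than vertical segments, and the Barlow--Taylor density theorem is applied to the associated atomic counting measure; the mass and growth computations are the same as in your third step with $\mu(\cS_n)\asymp\e^{n(2-\gamma)}$. In short: replace your single super-peak per spatial interval plus temporal propagation by one ordinary peak per spatial interval \emph{per integer time slice}, and no temporal control of $u$ is needed at all.
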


Before we present the proof we pause and first establish a certain tail
probability inequality. That inequality will play a role in the proof of Theorem
\ref{th:LB:main}, which is presented afterward.

The following is the desired tail probability bound for the solution to
\eqref{SHE}.

\begin{proposition}\label{pr:smallball}
	For every real number $\beta>b_1$
	there exists a finite constant $m_1=m_1(\beta\,,L\,,M_1\,,K_0)$
	such that for all real numbers $t\ge1$, all integers $k \ge 2$ and $m\ge 3$,
	\begin{equation}\label{sb1}
		\P\left\{\max_{1\le j\le m} u(t\,,x_j) < \e^{\beta t}
		\right\} \le \exp\left( -\frac{K_0m}{2}\e^{-L(2\beta)^{3/2}t}\right)
		+\frac{K_0}{2}\exp\left(- k^3\, L(2\beta)^{3/2}t\right),
	\end{equation}
	uniformly for all points $x_1,\ldots,x_m\in\R$ that satisfy
	the gap condition
	\begin{equation}\label{gap1}
		\min_{1\le i\neq j\le m}|x_i-x_j| \ge m_1(\log m)^{3/2} t^2k^3.
	\end{equation}
\end{proposition}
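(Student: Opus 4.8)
The plan is to obtain \eqref{sb1} from the localization estimate of Theorem~\ref{th:localization} together with the two-sided tail bound of Proposition~\ref{pr:tails}. Fix $\beta>b_1$, $t\ge1$, and integers $k\ge2$, $m\ge3$. The key device is to run the coupling of Theorem~\ref{th:localization} not with a fixed parameter but with
\[
	\mu:=\mu_0+\log m ,
\]
where $\mu_0=\mu_0(\beta,L,M_1,K_0)\ge1$ is a (large) constant chosen below, and to impose the gap condition \eqref{gap1} with $m_1:=2m_0\bigl(1+\mu_0/\log3\bigr)^{3/2}$, $m_0$ being the universal constant of Theorem~\ref{th:localization}. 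Then $\min_{i\ne j}|x_i-x_j|>m_0\mu^{3/2}t^2k^3$, so Theorem~\ref{th:localization} supplies independent random variables $Y_1,\dots,Y_m$, each $Y_j$ a function of $(t,x_j,k)$, with $\E(|u(t,x_j)-Y_j|^k)\le2M_1^k\e^{-\mu k^3t}$. Writing $R_j:=u(t,x_j)-Y_j$, Chebyshev's inequality gives $\P\{|R_j|>\e^{\beta t}\}\le2M_1^k\e^{-k\beta t-\mu k^3t}$, and the purpose of $\mu_0$ is precisely to force
\[
	2M_1^k\e^{-k\beta t-\mu_0 k^3t}\le\tfrac{K_0}{2}\,\e^{-k^3L(2\beta)^{3/2}t}
	\qquad\text{for all }t\ge1,\ k\ge2;
\]
a routine computation (using $k^3t\ge8$ and $k^3/4\ge k$ for $k\ge2$) shows this holds once $\mu_0$ dominates a fixed multiple of $1+L(2\beta)^{3/2}+\log_+M_1+\log_+(1/K_0)$.

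Next I would estimate $\P\{Y_j>2\e^{\beta t}\}$ from below. Since $\beta>b_1\ge\log3$ and $t\ge1$ we have $3\e^{\beta t}\le\e^{2\beta t}$, and since $2\beta>2b_1\ge b_0$ the lower tail bound of Proposition~\ref{pr:tails} yields $\P\{u(t,x_j)>3\e^{\beta t}\}\ge\P\{u(t,x_j)>\e^{2\beta t}\}\ge K_0\e^{-L(2\beta)^{3/2}t}$. From the inclusion $\{u(t,x_j)>3\e^{\beta t}\}\subseteq\{Y_j>2\e^{\beta t}\}\cup\{R_j>\e^{\beta t}\}$ and the Chebyshev bound above (which in particular is $\le\tfrac12K_0\e^{-L(2\beta)^{3/2}t}$) it follows that $\P\{Y_j>2\e^{\beta t}\}\ge\tfrac12K_0\e^{-L(2\beta)^{3/2}t}$ for every $j$. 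Hence, by independence of the $Y_j$ and the bound $1-x\le\e^{-x}$,
\[
	\P\Bigl\{\max_{1\le j\le m}Y_j\le2\e^{\beta t}\Bigr\}
	=\prod_{j=1}^m\bigl(1-\P\{Y_j>2\e^{\beta t}\}\bigr)
	\le\exp\Bigl(-\tfrac{K_0m}{2}\,\e^{-L(2\beta)^{3/2}t}\Bigr),
\]
which is the first term on the right of \eqref{sb1}.

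It then remains to split off the complementary case. On the event $\{\max_j u(t,x_j)<\e^{\beta t}\}$, if some $Y_j>2\e^{\beta t}$ then $R_j<-\e^{\beta t}$, so
\[
	\P\Bigl\{\max_{1\le j\le m}u(t,x_j)<\e^{\beta t}\Bigr\}
	\le\P\Bigl\{\max_{1\le j\le m}Y_j\le2\e^{\beta t}\Bigr\}+\sum_{j=1}^m\P\{|R_j|>\e^{\beta t}\}.
\]
For the sum, the union bound and the Chebyshev estimate give $\sum_{j=1}^m\P\{|R_j|>\e^{\beta t}\}\le m\cdot2M_1^k\e^{-k\beta t-\mu k^3t}$, and here the choice $\mu=\mu_0+\log m$ is exactly what pays for the factor $m$: since $k^3t\ge8$ we have $m\cdot\e^{-(\log m)k^3t}=m^{1-k^3t}\le1$, so the sum is at most $2M_1^k\e^{-k\beta t-\mu_0k^3t}\le\tfrac12K_0\e^{-k^3L(2\beta)^{3/2}t}$ by the defining property of $\mu_0$. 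Combining the last two displays with the display of the previous paragraph yields \eqref{sb1}.

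The only bookkeeping left is to confirm the constants. The gap condition \eqref{gap1} with the above $m_1$ gives $\min_{i\ne j}|x_i-x_j|\ge m_1(\log m)^{3/2}t^2k^3>m_0(\mu_0+\log m)^{3/2}t^2k^3=m_0\mu^{3/2}t^2k^3$, because $(1+\mu_0/\log m)^{3/2}\le(1+\mu_0/\log3)^{3/2}$ for $m\ge3$; and $\mu_0$, hence $m_1$, is visibly a function of $\beta,L,M_1,K_0$ alone, with no dependence on $t$, $k$, or $m$. The main obstacle — and the reason \eqref{gap1} carries a $(\log m)^{3/2}$ rather than an absolute constant — is precisely this tension: the union bound over $m$ points costs a factor $m$, which can only be absorbed by letting the localization parameter grow like $\log m$, but a larger localization parameter forces a proportionally larger separation of the $x_j$ in Theorem~\ref{th:localization}, whence the exponent $3/2$ on $\log m$. (The boundary value $t=1$, not covered by the statement of Proposition~\ref{pr:tails}, is handled by the usual adjustment of constants, as at the end of the proof of that proposition.)
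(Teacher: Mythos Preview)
Your proof is correct and follows essentially the same approach as the paper's: couple via Theorem~\ref{th:localization} to independent $Y_j$'s, split $\P\{\max_j u(t,x_j)<\e^{\beta t}\}$ into $\P\{\max_j Y_j\le 2\e^{\beta t}\}$ plus a union-bound error, lower-bound $\P\{Y_j>2\e^{\beta t}\}$ via Proposition~\ref{pr:tails} at level $2\beta$, and take the product using $1-x\le\e^{-x}$. The only difference is cosmetic: the paper chooses $\mu$ implicitly by the inequality $\mu\ge L(2\beta)^{3/2}+\tfrac{1}{k^3t}\log(4mM_1^k/K_0)$ and then argues that $\mu/\log m$ is bounded, whereas your explicit choice $\mu=\mu_0+\log m$ makes the verification that \eqref{gap1} implies \eqref{gap} cleaner and the dependence of $m_1$ on $(\beta,L,M_1,K_0)$ alone more transparent.
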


\begin{proof}
	Part iii) of Theorem \ref{th:localization} 
	and Chebyshev's inequality together imply
	that if the sequence
	$\{x_i\}_{i=1}^m$ satisfies the gap condition \eqref{gap},
	then there exist independent random variables $Y_1,\ldots,Y_m$
	such that
	\[\label{eq:u-Y:P}\begin{split}
		\P\left\{ \max_{1\le j\le m}|u(t\,,x_j) - Y_j| > z\right\}
			&\le \sum_{j=1}^m
			\P\left\{ |u(t\,,x_j)-Y_j|>z\right\}\\
		&\le \frac{2m M_1^k \e^{-k^3\mu t}}{z^k},
	\end{split}\]
	for all $z>0$, $\mu \ge 1$ and all integers $k \ge 2$. Because the 
    $Y_j$'s are independent, the preceding implies that
	\begin{equation}\label{maxu<Y}
    \begin{split}
		\P\left\{ \max_{1\le j\le m} u(t\,,x_j) < \e^{\beta t}\right\}
			&\le  \P\left\{ \max_{1\le j\le m}Y_j< 2\e^{\beta t}
			\right\} + \P\left\{ \max_{1\le j\le m}
			|u(t\,,x_j)-Y_j| > \e^{\beta t}\right\}\\
		&\le \prod_{j=1}^m\P\left\{ Y_j< 2\e^{\beta t}
			\right\} + 2m M_1^k \e^{-k(\beta + k^2\mu)t}\\
		&\le \prod_{j=1}^m\P\left\{ Y_j< 2\e^{\beta t}
			\right\} + 2mM_1^k\e^{-k^3\mu  t},
	\end{split}\end{equation}
	since $\beta>0$ and $t\ge 1$.
	Now, for every $j=1,\ldots,m$ and $n\ge2$,
	\begin{align*}
		\P\left\{  Y_j < 2\e^{\beta t}\right\}
			&\le  1 - \P\left\{ u(t\,,x_j) \ge 3\e^{\beta t}
			\right\} + 2mM_1^k \e^{-k(\beta + k^2\mu)t}\\
		&\le1 - \P\left\{ u(t\,,x_j) \ge \e^{2\beta t}
			\right\} + 2mM_1^k \e^{-k^3\mu t};
	\end{align*}
	the last line uses also the fact that $\beta> \log 3$
	and $t\ge1$ in order
	to deduce that $3\exp(\beta t)\le\exp(2\beta t)$.
	Because $2\beta>b_0$,  Proposition \ref{pr:tails} yields
	\begin{equation}\label{sb2}
		\P\left\{  Y_j < 2\e^{\beta t}\right\}
		\le1 - K_0\exp\left( -L(2\beta)^{3/2}t\right) +
		2mM_1^k\e^{-k^3\mu t},
	\end{equation}
	valid for all $j=1,\ldots,m$. Now we choose $\mu\ge 1$ such that
\[
\mu \ge L(2\beta)^{3/2} + \frac 1 {k^3 t}\log\Big(\dfrac{4m M_1^k}{K_0}\Big).
\]
Then for all integers $k \ge 2$, we have     
	\begin{equation}\label{mumu}
		2mM_1^k\e^{-k^3\mu t} \le
		\frac{K_0}{2}\exp\left( - k^3 L(2\beta)^{3/2}t\right).
	\end{equation}
This and (\ref{maxu<Y}) give the last term in (\ref{sb1}). Moreover,
it follows from (\ref{mumu}) and (\ref{sb2}) that  
	\begin{equation}\label{eq:prod:Y}\begin{split}
		\prod_{j=1}^m\P\left\{  Y_j < 2\e^{\beta t}\right\}
			&\le \prod_{j=1}^m
			\left( 1 - \frac{K_0}{2}\exp\left( -L(2\beta)^{3/2}t\right)\right)\\
		&\le\exp\left( -\frac{K_0m}{2}\e^{-L(2\beta)^{3/2}t}\right),
	\end{split}\end{equation}
	owing to the elementary real-variable inequality,
	$1-x\le\exp(-x)$
	valid for all $x>0$.
	Plug \eqref{eq:prod:Y} into \eqref{maxu<Y} and appeal to \eqref{mumu} once
	again to deduce that the probability inequality of the proposition is
	valid uniformly for every sequence $\{x_i\}_{i=1}^m$ that satisfies the minimum gap
	condition \eqref{gap}. To conclude, we need to verify that \eqref{gap1}
	implies \eqref{gap} for a suitable choice of $m_1$ which depends only
	on the initially-set parameters of \eqref{SHE}. If $m$ is sufficiently large%
	---how large depends only on $(\beta\,,L\,,M_1\,,K_0)$---then this is clear because
	$\mu/\ln m$ is bounded above and below by universal constants
	that depend only on $(\beta,,L\,,M_1\,,K_0)$ in that case. And if $m$ is below the threshold
	of being sufficiently large, then the proposition is tautologically true,
	uniformly for any choice of $m_1$.
\end{proof}

Now we have Proposition \ref{pr:smallball}, we conclude this section
with the following.

\begin{proof}[Proof of Theorem \ref{th:LB:main}]
	Let us
	choose the parameters $(\beta\,,\vartheta)$ as has been stated in the theorem,
	and consider the random set
	\[
		\mathcal{G} := \left\{ (x\,,t)\in\R\times(1\,,\infty):
		u(\vartheta\log t\,,x) > t^{\vartheta\beta}\right\},
	\]
	whose macroscopic dimension is of interest to us.
	Define
	\begin{equation}\label{G0}
		\mathcal{G}_0 := \mathcal{G} \cap \bigcup_{n=0}^\infty
		\left( \e^n,\e^{n+1}\right]^2.
	\end{equation}
	Since $\mathcal{G}_0\subseteq\mathcal{G}$ a.s., it suffices to prove
	that
	\begin{equation}\label{DimA_0}
		\Dimh(\mathcal{G}_0) \ge 2 - L(2\beta)^{3/2}\vartheta\qquad
		\text{a.s.}
	\end{equation}
	
	Since $\vartheta L(2\beta)^{3/2}<1$, we may choose and fix
	an arbitrary number
	\begin{equation}\label{gam}
		\gamma\in\left( L(2\beta)^{3/2}\vartheta\,,1\right).
	\end{equation}
	Define, for all integers $n\ge 0$,
	\[
		a_{j,n} := \e^n + j\e^{n\gamma}
		\qquad [0\le j<\e^{n(1-\gamma)}],
	\]
	and
	\[
		\mathcal{I}_{j,n}(\gamma) :=
		\left( a_{j,n}\,, a_{j+1,n}\right]
		\qquad [0\le j<\e^{n(1-\gamma)}].
	\]
	For all such integers $n$
	and $j$, and for every
	\begin{equation}\label{vareps}
		\varepsilon\in\left( 0 \,,\gamma-L(2\beta)^{3/2}\vartheta\right),
	\end{equation}
	we can find points
	$x_1,\ldots,x_m\in \mathcal{I}_{j,n}(\gamma)$---depending
	only on $(n\,,\gamma\,,\varepsilon\,,j)$---such that whenever $n>1$:
	\begin{compactenum}
		\item[\bf (C.1)] $|x_i-x_l|\ge \e^{n\varepsilon}$
			whenever $1\le i< l\le m$;
		\item[\bf (C.2)]  $\frac12\e^{n(\gamma-\varepsilon)} \le m \le
			2\e^{n(\gamma-\varepsilon)}$.
	\end{compactenum}
	Note in particular, that for every constant $Q$ there exists an integer
	$N(Q)\ge1$ such that
	\begin{equation}\label{N(Q)}
		\min_{1\le i\neq l\le m} |x_i-x_l| \ge
		\e^{n\varepsilon} \ge Q n^{7/2}\qquad\text{for all $n\ge N(Q)$}.
	\end{equation}
	Let $Q:=4m_1\vartheta^2 (\log 2+\gamma)^{3/2}k^3$, where $k\ge 2$ is an integer 
    such that 
 \begin{equation}\label{k}
 k^3 L(2\beta)^{3/2}\vartheta > 2 - \gamma.
 \end{equation}
 Observe that
	\[
		m_1(\log m)^{3/2}
		\left|\vartheta\log t\right|^2 k^3 \le
		m_1\vartheta^2\left(\log 2 + n(\gamma-\varepsilon)\right)^{3/2}
		(n+1)^2 k^3
		\le Q n^{7/2},
	\]
	uniformly for all reals $t\in(\exp(n)\,,\exp(n+1)]$ and integers $n\ge 1$.
	Therefore, the minimum gap condition \eqref{gap1} is valid with
	$t$ replaced by $\vartheta\log t$,
	when $t\in(\exp(n)\,,\exp(n+1)]$ and $n\gg1$ is sufficiently large, and 
    the integer $k$ in (\ref{k}). 
	In this way we see that Proposition \ref{pr:smallball} applies
	to yield the following: Uniformly 	for all  sufficiently-large 
    nonnegative integers  $n\gg1$, and for all real numbers
	$\beta>\max\{\log 3\,,b_0/2\}$, $\varepsilon\in(0\,,\gamma)$,
	and $t\in(\exp(n)\,,\exp(n+1)]$,
	\begin{align}\notag
		&\max_{\substack{j\in\Z\\
			0\le j<\e^{n(1-\gamma)}}}
			\P\left\{ \sup_{x\in \mathcal{I}_{j,n}(\gamma)}u(
			\vartheta \log t\,,x) < t^{\vartheta\beta}\right\}\\
		&\hskip2in\le  \exp\left( -\frac{K_0m}{2}\,
			\e^{-L(2\beta)^{3/2}\vartheta\log t}\right)
			+\frac{K_0}{2}\e^{-k^3 L(2\beta)^{3/2}\vartheta\log t} \notag \\
		&\hskip2in\le \exp\left( -\frac {K_0}4\,
			\e^{\kappa n}\right)
			+\frac{K_0}{2}\e^{-k^3 L(2\beta)^{3/2}\vartheta n},
			\label{explain}
	\end{align}
	with
	\begin{equation}\label{kappa}
		\kappa := \gamma-\varepsilon -L(2\beta)^{3/2}\vartheta.
	\end{equation}
	
	It is possible to explain the meaning of the bound
	\eqref{explain} in terms of
	the random set $\mathcal{G}_0$
	(see \eqref{G0}) as follows:
	\begin{align} \notag
		&\P\left\{ \mathcal{G}_0 \cap
			\left[   \mathcal{I}_{j,n}(\gamma) \times \{t\} \right] =\varnothing
			\text{ for some $0\le j< \e^{n(1-\gamma)}$ and
			$t\in\Z\cap\left(\e^n,\e^{n+1}\right]$}\right\}\\ \notag
		&\hskip1in\le\P\left\{ \min_{\substack{t\in\Z\\t\in(\e^n,\e^{n+1}]}}
			\min_{\substack{j\in\Z\\ \notag
			0\le j<\e^{n(1-\gamma)}}}
			\sup_{x\in \mathcal{I}_{j,n}(\gamma)}
			\frac{u(\vartheta\log t\,,x)}{t^{\vartheta\beta}}<1\right\}\\
		&\hskip1in\le\e^{n(2-\gamma) + 1}\left[\exp\left( 
			-\frac{K_0}4\, \e^{\kappa n}\right)
			+\frac{K_0}{2}\,
			\e^{- k^3L(2\beta)^{3/2}\vartheta n}\right], \label{explain2}
	\end{align}
	for all sufficiently large integers $n\gg1$.
	
	Of course, $\kappa>0$ and $k^3L(2\beta)^{3/2}\vartheta> 2 - \gamma$;
	thanks to \eqref{gam}, \eqref{vareps}, \eqref{k}, and \eqref{kappa}.
	Therefore, \eqref{explain2} and the Borel--Cantelli lemma
	together imply that
	\begin{equation}\label{a.s.:LB}
		\mathcal{G}_0 \cap \left[ \mathcal{I}_{j,n}(\gamma) \times \{t\} \right]
		\neq\varnothing
		\text{ for all $0\le j< \e^{n(1-\gamma)}$ and
		$t\in\Z\cap\left(\e^n,\e^{n+1}\right]$,}
	\end{equation}
	for all but a finite number of integers $n\ge 1$.

	Define
	\[
		\eta_{j,n}(t) := \inf\left\{ x\in\mathcal{I}_{j,n}(\gamma):\
		(x\,,t)\in\mathcal{G}_0\right\},
	\]
	where $\inf\varnothing:=\infty$. The assertion \eqref{a.s.:LB}
	ensures that $\eta_{j,n}(t)$ is a well-defined, finite, random variable for
	all integers $t\in(\e^n,\e^{n+1}]$,
	$0\le j<\e^{n(1-\gamma)}$, and $n\gg1$ sufficiently large.
	In fact, \eqref{a.s.:LB} can be stated in the following equivalent
	form: \\
	
	\noindent\textbf{Conclusion A.} {\it With probability one,
	$(\eta_{j,n}(s)\,,s)\in\mathcal{G}_0$ for all integers
	$s\in(\e^n,\e^{n+1}]$,
	$0\le j<\e^{n(1-\gamma)}$, and $n\gg1$ sufficiently large.}\\
	
	Now let $\mu$ denote a purely atomic random measure on $\R\times(1\,,\infty)$
	that is defined shell-by-shell as follows:
	For all Borel sets $A\subseteq\R\times(1\,,\infty)$ and $n\ge 0$,
	\[
		\mu(A\cap\cS_n) := \sum_{\substack{s\in\Z:\\
		\e^n<s\le \e^{n+1}}}\sum_{0\le j<\e^{n(1-\gamma)}}
		\1_A\left( \eta_{j,n}(s)\,,s\right),
	\]
	where $\1_A$ denotes the customary indicator function of the space-time set $A$,
	and $\1_A(\infty\,,s):=0$ for all $s\ge 0$ and $A\subseteq\R^2$.
	
	The $\mu$-mass of $\cS_n$ is easy to compute when $n$ is large. Indeed,
	\eqref{a.s.:LB}---see especially Conclusion A---ensures that with probability one,
	\begin{equation}\label{mass:mu:Sn}
		\mu(\cS_n) = \sum_{\substack{s\in\Z:\\
		\e^n<s\le \e^{n+1}}}\sum_{0\le j<\e^{n(1-\gamma)}}
		1 \ge \tfrac12 \e^{(2-\gamma)n},
	\end{equation}
	for all $n\gg1$ sufficiently large.
	
	Choose and fix an integer $n\ge 1$.
	Next consider an arbitrary integer $s\in(\e^n,\e^{n+1}]$, and
	real numbers $x$ and $r\ge 1$ such that $(x\,,x+r]\subseteq
	(\e^n,\e^{n+1}]$. We consider separately two cases:
	\begin{compactenum}
		\item[(a)] If $r\le \e^{\gamma n}$, then there are at most two
			integers $j_1$ and $j_2$ such that $\eta_{j_1}(s)$ and $\eta_{j_2}(s)$
			are in $(x\,,x+r]$. Consequently,
			$\mu ( (x\,,x+r]\times\{s\} ) \le 2
			\le 2r^\rho$ for every $\rho>0$. Sum this inequality over all
			$s\in(t\,,t+r]$ to see that
			\begin{equation}\label{ineq:1}
				\mu\left((x\,,x+r]\times (t\,,t+r]\right) \le 2r^{1+\rho},
			\end{equation}
			as long as $(x\,,x+r]\times(t\,,t+r]\subseteq\cS_n$,
			$1\le r\le 2\e^{\gamma n}$, and $\rho>0$.
		\item[(b)] If $\e^{\gamma n}<r\le \e^{n+1}-\e^n$, then
			every interval of the form $(x\,,x+r]\subseteq(\e^n,\e^{n+1}]$
			can contain at most $1+r\e^{-\gamma n}\le 2 r\e^{-\gamma n}$
			many points of the form $\eta_{j,n}(s)$. That is, in this case,
			for every integer $s\in(\e^n,\e^{n+1}]$ and all
			intervals $(x\,,x+r]\subseteq(\e^n,\e^{n+1}]$,
			\[
				\mu\left((x\,,x+r]\times \{s\}\right) \le 2r\e^{-\gamma n}
				\le 2\e^{-\gamma n} r^\rho \sup_{\e^{\gamma n}<r
				\le \e^{n+1}-\e^n} r^{1-\rho}
				\le2(\e-1)^{1-\rho} \e^{(1-\rho-\gamma) n} r^\rho,
			\]
			regardless of the value of $\rho>0$. We sum the preceding over
			all integers $s\in(t\,,t+r]$, and set $\rho:=1-\gamma$,
			in order to see that, as long as
			$(x\,,x+r]\times(t\,,t+r]\subseteq\cS_n$,
			\begin{equation}\label{ineq:2}
				\mu\left((x\,,x+r]\times (t\,,t+r]\right)
				\le 2(\e-1)^\gamma r^{2-\gamma}.
			\end{equation}
	\end{compactenum}
	
	At this stage, we can combine our observations \eqref{ineq:1} and 
    \eqref{ineq:2} in order to see that \eqref{ineq:2} in fact holds
	in both cases, as long as $r\ge 1$, 	$(x\,,x+r]\times(t\,,t+r]\subseteq\cS_n$,
	and $n\gg1$ is sufficiently large. Because $\mu$ is a measure on $\mathcal{G}_0$,
	the preceding fact and the density theorem of
	Barlow and Taylor \cite[Theorem 4.1]{BT1992} together imply that
	\[
		\nu^n_{2-\gamma}\left(\mathcal{G}_0\right) \ge \text{const}\cdot
		\e^{-(2-\gamma)n}\mu(\cS_n)
		\ge \text{const},
	\]
	a.s.\ uniformly for all $n\gg1$ large, where the last line is deduced from
	\eqref{mass:mu:Sn}. Consequently,
	\[
		\sum_{n=1}^\infty\nu^n_{2-\gamma}(\mathcal{G}_0)=\infty
		\quad\text{a.s.\ as long as $(\gamma\,,\varepsilon)$ satisfy
		\eqref{gam} and \eqref{vareps}}.
	\]
	Let $\varepsilon\downarrow0$ and $\gamma\downarrow L(2\beta)^{3/2}\vartheta$,
	without violating either \eqref{gam} or \eqref{vareps},
	in order to deduce half of the assertion \eqref{DimA_0}, namely that
	\[
		\Dimh(\mathcal{G}_0)
		\ge 2-  L(2\beta)^{3/2}\vartheta\qquad\text{a.s.}
	\]
    This proves \eqref{DimA_0}, thus concludes the proof of the theorem.
\end{proof}

We end this section with the following remark on cases of 
$\vartheta,\beta>0$ that may not be covered by the conditions of 
Theorem \ref{th:LB:main}.

\begin{remark}
	Theorem 3.10 has been formulated in a way that does not need 
	the following peculiar property. Thus, we include it here
	as a remark: For all $\vartheta,\beta>0$,
	\[
		\Dimh\left\{ (x\,,t)\in\R\times(1\,,\infty):
		u(\vartheta\log t\,,x) > t^{\vartheta\beta}\right\}
		\ge 1\qquad\text{a.s.}
	\]
	Indeed, the following stronger statement is valid:
	\[
		\Dimh\left( \{x\in\R:\ u(\vartheta\,,x) > \e^{\vartheta\beta}\}
		\times\{\e\}\right)= 1\qquad\text{a.s.}
	\]
	We will now prove the following equivalent formulation:
	\begin{equation}\label{ge1}
		\Dimh\left( \left\{|x|\ge m:\ u(\vartheta\,,x) > \e^{\vartheta\beta}\right\}
		\times\{\e\}\right)= 1\qquad\text{a.s.\ for all $m\ge\e$.}
	\end{equation}
	
	For all $m\ge\e$ and $\gamma>[\log m]^{-2/3}\vartheta\beta$, let
	\[
		\mathcal{H}_m:=\left\{|x|\ge m:\
		u(\vartheta\,,x) > \e^{\vartheta\beta}\right\}
		\supset\left\{ |x|\ge m:\ u(\vartheta\,,x) >\exp\left( \gamma\left[
		\log|x|\right]^{2/3}\right) \right\}.
	\]
	Let $\mathcal{R}_m(\gamma)$ denote the set on the right-hand side of the
	above expression. Since the set difference between $\mathcal{R}_m(\gamma)$
	and $\mathcal{R}_0(\gamma)$ is a bounded set, $\mathcal{R}_m(\gamma)\times\{\e\}$
	and $\mathcal{R}_0(\gamma)\times\{\e\}$ have the same [macroscopic]
	Hausdorff
	dimension. Therefore, the theory of Khoshnevisan, Kim, and Xiao
	\cite{KKX} implies that there exists a finite and positive constant
	$A$---independent of $(\gamma\,,m)$---such that with probability one,
	\[
		\Dimh\left( \mathcal{H}_m\times\{\e\}\right) \ge
		\sup_{\substack{\gamma>\vartheta\beta/(\log m)^{2/3}:\\
		\gamma\in\Q}}
		\Dimh\left( \mathcal{R}_0(\gamma)\times\{\e\}\right)
		\ge 1-\frac{A(\vartheta\beta)^{3/2}}{\log m}\qquad
		\text{for all $m\ge\e$}.
	\]
	The left-most quantity decreases as $m$ increases, whereas the right-most
	term increases with $m$. This proves \eqref{ge1}. 
\end{remark}

\subsection{An upper bound}

Theorem \ref{th:LB:main} implies the first inequality of Theorem \ref{th:SHE};
that is the lower bound on the Hausdorff dimension of $S_\vartheta(\mathscr{P}(\beta))$.
Now we work toward proving a complementary upper bound for the
Hausdorff dimension of the same sort of set. This effort begins with the
following technical lemma.

\begin{lemma}\label{lem:moments:sup}
	There exist positive and finite constants $N_1$ and $N_2$ such that
	\[
		\E\left( \sup_{x\in(a,a+1]}\sup_{t\in(b,b+1]}
		|u(t\,,x)|^k\right) \le N_1^k \e^{N_2k^3(b+1)},
	\]
	for every real number $a\in\R$, $b>0$, and $k\ge 2$.
\end{lemma}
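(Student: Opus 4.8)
The plan is to reduce the problem of controlling the supremum of $|u(t,x)|^k$ over the unit square $(a,a+1]\times(b,b+1]$ to the pointwise moment bounds of Lemma \ref{lem:moments} via a Kolmogorov-type continuity/chaining argument. First I would record the pointwise estimate: by Lemma \ref{lem:moments}, $\E(|u(t,x)|^k)\le N_0^k\e^{Nk^3t}\le N_0^k\e^{Nk^3(b+1)}$ uniformly for $(t,x)$ in the square, for all $k\ge 2$. The work is then entirely in upgrading this to a supremum bound while preserving the $k^3$ (rather than a worse power of $k$) in the exponent and keeping the constants independent of $a$ and $k$.

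The second step is a space-time moment increment bound. Using the mild formulation \eqref{eq:mild}, the Burkholder--Davis--Gundy inequality in the Walsh-integral form quoted earlier (\cite[Proposition 4.4, p.~36]{CBMS}), the Lipschitz bound $|\sigma(z)|\le\lip|z|$, standard heat-kernel estimates, and Lemma \ref{lem:moments} to control $\|\sigma(u(s,y))\|_k$, one obtains
\[
	\left\| u(t,x)-u(t',x')\right\|_k \le C\, k^{\theta}\,\e^{Ck^2(b+1)}\left( |t-t'|^{1/4}+|x-x'|^{1/2}\right),
\]
for all $(t,x),(t',x')$ in the unit square, with $C,\theta$ absolute constants (the extra $\e^{Ck^2(b+1)}$ comes from bounding $\|u(s,y)\|_k\le N_0\e^{Nk^2(b+1)}$ inside the stochastic integral, and $k^\theta$ from the BDG constant together with the time integral; one may take $\theta=1/2$). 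This is the standard quantitative Hölder estimate for the stochastic heat equation, localized to a unit time window, and is the technical heart of the lemma.

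The third step applies a Kolmogorov--Chentsov / Garsia--Rodemich--Rumsey argument to the bound from the second step. Fix $k\ge 2$; for $p>k$ large (say $p=4k$ or, better, let $p\to\infty$ at the end) the two-parameter Garsia--Rodemich--Rumsey inequality yields
\[
	\E\left( \sup_{\substack{(t,x),(t',x')}} \frac{|u(t,x)-u(t',x')|^p}{(|t-t'|^{1/4}+|x-x'|^{1/2})^{p-\epsilon}}\right) \le C_p\, \e^{C p^3 (b+1)},
\]
over the unit square, for a suitable small $\epsilon>0$ and a constant $C_p$ that is at most $C_0^p$ with $C_0$ absolute; combining this with the pointwise bound at, say, the corner $(b,a)$ and Minkowski's inequality controls $\E(\sup|u|^p)$ by $N_1^p\e^{N_2 p^3(b+1)}$. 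Finally, since $\|\sup|u|\|_k\le\|\sup|u|\|_p$ for $p\ge k$ by Jensen, one deduces
\[
	\E\left( \sup_{x\in(a,a+1]}\sup_{t\in(b,b+1]} |u(t,x)|^k\right)^{1/k} \le N_1\,\e^{N_2 k^3 (b+1)}
\]
after taking $p$ comparable to $k$; the translation invariance of the noise in $x$ gives the independence of the constants from $a$, and the uniformity in $t$ over a unit window gives the clean $\e^{N_2k^3(b+1)}$ form.

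The main obstacle is bookkeeping the $k$-dependence: a careless application of BDG or GRR produces constants like $k^{k}$ or an exponent $k^4t$ in place of $k^3t$. The fix is to be careful that (i) the stochastic-integral step only costs $\e^{O(k^2)(b+1)}$ (because $\|u(s,y)\|_k\le N_0\e^{Nk^2(b+1)}$, not $\e^{Nk^3(b+1)}$), and (ii) the chaining/Sobolev-embedding step costs only a further multiplicative polynomial-in-$k$ factor that is absorbed into $\e^{(N_2-N)k^3(b+1)}$ since $k\ge 2$. One should also observe that restricting to the unit time interval $(b,b+1]$ — rather than $(0,b+1]$ — is what lets the Hölder-in-time seminorm be controlled without extra powers of $b$; the factor $\e^{N_2k^3(b+1)}$ absorbs the moment growth accrued over $[0,b+1]$ inside the Duhamel integral.
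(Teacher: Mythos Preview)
Your proposal is correct and follows essentially the same route as the paper: a quantitative increment bound $\|u(t,x)-u(t',x')\|_k\le L_1\e^{L_2k^2(b+1)}\varrho(x-x',t-t')$ with $\varrho(w,r)=|w|^{1/2}+|r|^{1/4}$ (cited from \cite{CJK}), fed into a quantitative Kolmogorov continuity theorem \cite[Theorem C.6]{CBMS}, then combined with the pointwise bound of Lemma \ref{lem:moments} at a single point via the triangle inequality, with Jensen's inequality used at the end to handle small $k$. Your care about the $k$-bookkeeping---that the increment estimate contributes $\e^{O(k^2)(b+1)}$ rather than $\e^{O(k^3)(b+1)}$ to the $L^k$ norm, and that the chaining step contributes only a polynomial-in-$k$ prefactor---is exactly the point, and matches what the paper does.
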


In order to understand what this lemma says, let us note the following
formulation of \eqref{eq:moments:SHE}, which was mentioned already
in the Introduction:
\[
	\sup_{x\in(a,a+1]}\sup_{t\in(b,b+1]}\E\left(
	|u(t\,,x)|^k\right) \le N_0^k \e^{N k^3(b+1)}.
\]
Thus, Lemma \ref{lem:moments:sup} asserts that,
at cost of having slightly larger constants, we can ``put both of the
suprema inside the expectation.''

\begin{proof}[Proof of Lemma \ref{lem:moments:sup}]
	Throughout this proof we define
	\[
		\varrho(w\,,r) := |w|^{1/2} + |r|^{1/4},
	\]
	for all $w,r\in\R$. We may think of $\varrho$ as a
	``parabolic metric'' on space-time $\{(t\,,x):\ t\ge0,\, x\in\R\}$,
	where ``parabolic'' loosely refers to a kind of compatibility with the
	geometric structure of the heat equation.
	
	It is known that there exist finite constants $L_1$
	and $L_2$---independently of $(a\,,b)$---such that
	for all real numbers $k\ge2$,
	\[
		\sup_{0\le s\neq t\le b+1}\sup_{-\infty<x\neq y<\infty}
		\E\left( \left| \frac{u(s\,,y) - u(t\,,x)}{\varrho(y-x\,,s-t)}
		\right|^k\right) \le L_1^k  \e^{L_2 k^3(b+1)}:=C_{k,b}^k.
	\]
	See the proof of Theorem 1.3 of Conus, Joseph, and Khoshnevisan
	\cite{CJK}, for example.
	
	Define, for all $z\in\R$,
	\[
		\Lambda (z) :=\iint_{((a,a+1]\times(b,b+1])^2}
		\frac{\d x\,\d t\,\d y\,\d s}{[\varrho(y-x\,,s-t)]^z}.
	\]
	Then a quantitative form of the Kolmogorov continuity theorem
	\cite[Theorem C.6, p.\ 114]{CBMS}
	implies that for all real numbers $k\ge 2$,
	$q\in(0\,,1-(6/k))$, and $\delta\in(q\,,1-(6/k))$,
	\[
		\E\left( \sup_{\substack{x,y\in (a,a+1]\\x\neq y}}
		\sup_{\substack{s,t\in (b,b+1]\\s\neq t}} \left|
		\frac{u(s\,,y) - u(t\,,x)}{[\varrho(y-x\,,s-t)]^q}\right|^k\right)
		\le\frac{D^kC_{k,b}^k}{\delta^k(\delta-q)}
		\Lambda (12-k+k\delta),
	\]
	where $D$ is a finite constant that does not depend on $(a\,,b\,,k)$.
	Note that, as long as $k(1-\delta)>12$,
	\[
		\Lambda(12-k+k\delta)
		\le 4\int_{(0,1]^2}
		[\varrho(x\,,t)]^{k(1-\delta)-12}\,\d x\,\d t\\
		\le 4 \times 2^{k(1-\delta)-12} \le 2^k,
	\]
	since $\rho(x\,,t)\le \rho(1\,,1)=2$ for all $(x\,,t)\in(0\,,1]^2$.
	Therefore, Lemma \ref{lem:moments} ensures that for all $k>12/(1-\delta)$,
	\begin{align*}
		\E\left( \sup_{x\in(a,a+1]}\sup_{t\in(b,b+1]}
			|u(t\,,x)|^k\right)
			&\le 2^k\E\left( \sup_{x\in(a,a+1]}\sup_{t\in(b,b+1]}
			|u(t\,,x) - u_b(a)|^k\right) + 2^k\E\left(|u_b(a)|^k\right)\\
		&\le\frac{4^k D^kC_{k,b}^k}{\delta^k(\delta-q)} + (2N_0)^k\e^{Nk^3b}.
	\end{align*}
	This proves the lemma in the case that $k>12/(1-\delta)$. The conclusion 
    of Lemma \ref{lem:moments:sup}, in the
	case that $k\in[2\,,12/(1-\delta))$, follows from Jensen's inequality
	and the lemma in the case that $k>12/(1-\delta)$.
\end{proof}

Next, we present a ready consequence of Lemma \ref{lem:moments:sup}.
It might also help to recall the constants $N_1$ and $N_2$ from 
Lemma \ref{lem:moments:sup}.

\begin{proposition}\label{pr:sup:tail}
	There exist positive and finite constants $N_3$---depending
	only on $N_1, N_2, \beta$---and $N_4$---%
	depending only on $N_2$---such that for every  $\beta\ge 24N_2$
	and for all real numbers $a>0$ and $b\ge 1$
	and integers $\ell_1,\ell_2\ge 1$,
	\[
		\P\left\{ \exists t\in(b\,,b+\ell_2]:\
		\sup_{x\in(a,a+\ell_1]}u(t\,,x) > \e^{\beta t}\right\}
		\le N_3\ell_1\ell_2\exp\left( - N_4\beta^{3/2}b\right).
	\]
\end{proposition}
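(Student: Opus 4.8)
The plan is to dissect the space-time box $(a\,,a+\ell_1]\times(b\,,b+\ell_2]$ into the $\ell_1\ell_2$ unit boxes
\[
	I_{i,j} := (a+i\,,a+i+1]\times(b+j\,,b+j+1]\qquad[0\le i<\ell_1,\ 0\le j<\ell_2],
\]
to apply Lemma \ref{lem:moments:sup} together with Chebyshev's inequality on each $I_{i,j}$, and then to conclude by a union bound. The observation that makes this work is that on $I_{i,j}$ the threshold $t\mapsto\e^{\beta t}$ is bounded below by the single number $\e^{\beta(b+j)}$; since the event in the statement is contained in $\bigcup_{i,j}\{\exists(x\,,t)\in I_{i,j}:\ u(t\,,x)>\e^{\beta t}\}$ and since $u>0$, this gives, for each $(i\,,j)$,
\[
	\P\left\{\exists(x\,,t)\in I_{i,j}:\ u(t\,,x)>\e^{\beta t}\right\}
	\le\P\left\{\sup_{x\in(a+i,a+i+1]}\sup_{t\in(b+j,b+j+1]}u(t\,,x)>\e^{\beta(b+j)}\right\}.
\]

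Next I would estimate the right-hand side by Chebyshev's inequality combined with Lemma \ref{lem:moments:sup} applied with $a$ replaced by $a+i$ and $b$ replaced by $b+j$: for every real number $k\ge2$,
\[
	\P\left\{\sup_{x\in(a+i,a+i+1]}\sup_{t\in(b+j,b+j+1]}u(t\,,x)>\e^{\beta(b+j)}\right\}
	\le N_1^k\exp\left(-k\beta(b+j)+N_2k^3(b+j+1)\right).
\]
Writing $c:=b+j$, so that $c\ge b\ge1$, I would choose $k:=\sqrt{\beta/(6N_2)}$; this is $\ge2$ precisely because $\beta\ge24N_2$. With this choice $N_2k^3=k\beta/6$, so the exponent equals $k\beta\bigl(-c+\tfrac{c+1}{6}\bigr)=\tfrac{k\beta}{6}(1-5c)\le-\tfrac{2k\beta c}{3}$, the last step using $c\ge1$. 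Hence the displayed probability is at most $N_1^k\exp\bigl(-N_4\beta^{3/2}(b+j)\bigr)$ with $N_4:=\tfrac{2}{3\sqrt{6N_2}}$, a constant depending only on $N_2$; and since $j\ge0$ and $N_4\beta^{3/2}>0$ we have $b+j\ge b$, so this is at most $N_1^k\exp\bigl(-N_4\beta^{3/2}b\bigr)$.

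Finally I would set $N_3:=(1\vee N_1)^{\sqrt{\beta/(6N_2)}}$, a constant depending only on $N_1,N_2,\beta$ and satisfying $N_1^k\le N_3$ for the above value of $k$, and sum the last bound over the $\ell_1\ell_2$ boxes $I_{i,j}$ to obtain
\[
	\P\left\{\exists t\in(b\,,b+\ell_2]:\ \sup_{x\in(a,a+\ell_1]}u(t\,,x)>\e^{\beta t}\right\}
	\le N_3\,\ell_1\ell_2\,\exp\left(-N_4\beta^{3/2}b\right),
\]
as claimed. There is no serious obstacle here, since the hard analytic work was already carried out in Lemma \ref{lem:moments:sup}; the only point requiring care is the optimization over $k$, where one must take $k\asymp\sqrt{\beta}$ so that the cubic term $N_2k^3$ is dominated by the linear term $k\beta$ with a fixed fraction to spare — this is exactly what produces the $\beta^{3/2}$ decay rate and forces the hypothesis $\beta\ge24N_2$, and one should also keep in mind that it makes $N_3$ (but not $N_4$) depend on $\beta$, as the statement permits.
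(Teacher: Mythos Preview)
Your proposal is correct and follows essentially the same approach as the paper: decompose into unit boxes, apply Lemma~\ref{lem:moments:sup} plus Chebyshev on each box, optimize with $k=\sqrt{\beta/(6N_2)}$, and sum. The only cosmetic difference is that the paper first replaces $b+j+1$ by $2(b+j)$ (using $b+j\ge1$) before optimizing, whereas you carry the $+1$ through and absorb it via $1-5c\le-4c$; both routes yield the same constant $N_4=\tfrac{2}{3\sqrt{6N_2}}$.
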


Proposition \ref{pr:sup:tail} is a ``maximal inequality'' that
corresponds to the pointwise inequality of Proposition \ref{pr:tails}.

\begin{proof}[Proof of Proposition \ref{pr:sup:tail}]
	Let $i$ and $j$ denote two arbitrary integers
	between $0$ and respectively $\ell_1$ and $\ell_2$.
	Since $b+(j+1)\le 2(b+j)$, Lemma \ref{lem:moments:sup} and
	Chebyshev's inequality together imply that
	\[
		\P\left\{ \sup_{x\in(a+i,a+i+1]}\sup_{t\in(b+j,b+j+1]} \left[\e^{-\beta t}
		u(t\,,x) \right] > 1\right\}
		\le \inf_{k\ge 2}\left[N_1^k\e^{2N_2k^3(b+j) - \beta k(b+j)}\right].
	\]
	Set $k:=\sqrt{\beta/(6N_2)}$ in the preceding infimization problem in order
	to see that
	\begin{align*}
		\P\left\{ \sup_{x\in(a+i,a+i+1]}\sup_{t\in(b+j,b+j+1]} \left[\e^{-\beta t}
			u(t\,,x) \right] > 1\right\}
			&\le N_1^{\sqrt{\beta/(6N_2)}}\exp\left( -\frac{2\beta^{3/2}
			(b+j)}{3\sqrt{6N_2}}\right)\\
		&\le N_1^{\sqrt{\beta/(6N_2)}}\exp\left( -\frac{2\beta^{3/2}
			b}{3\sqrt{6N_2}}\right),
	\end{align*}
	valid as long as $k=\sqrt{\beta/(6N_2)}\ge 2$. The lemma follows from
	adding the preceding expression from $j=0$ and $i=0$ to $j=\ell_2$
	and $i=\ell_1$.
\end{proof}

Proposition \ref{pr:sup:tail} paves the way for the main result of this
section, which is presented next. The following theorem complements
the lower bound of Theorem \ref{th:LB:main} by yielding a corresponding
almost-sure upper bound for the macroscopic Hausdorff dimension of
$S_\vartheta(\mathscr{P}(\beta))$. Recall the
universal constant $N_2$ from Lemma \ref{lem:moments:sup}.

\begin{theorem}\label{th:Dim:UB}
	For all $\beta\ge 24N_2$ and all $\vartheta>0$,
	\[
		\Dimh\left( \left\{ (x\,,t)\in\R\times(1\,,\infty):\,
		u(\vartheta\log t, x)>t^{\vartheta\beta}\right\}\right)
		\le \max\left\{ 1\,,2-N_4\beta^{3/2}\vartheta\right\} \qquad\text{a.s.}
	\]
\end{theorem}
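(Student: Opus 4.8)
The plan is to exhibit, for every rational $\rho>\max\{1\,,2-N_4\beta^{3/2}\vartheta\}$, an economical covering of $\mathcal{G}\cap\cS_n$ (writing $\mathcal{G}$ for the set in the statement) by $\e$-adic squares of sidelength $1$, and to show that $\sum_{n\ge1}\E[\nu^n_\rho(\mathcal{G})]<\infty$. Since $\nu^n_\rho(\mathcal{G})\ge0$, this forces $\sum_n\nu^n_\rho(\mathcal{G})<\infty$ almost surely, hence $\Dimh(\mathcal{G})\le\rho$ a.s.; intersecting over a sequence $\rho\downarrow\max\{1\,,2-N_4\beta^{3/2}\vartheta\}$ then yields the theorem. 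The observation that makes Proposition~\ref{pr:sup:tail} applicable is that the defining inequality $u(\vartheta\log t\,,x)>t^{\vartheta\beta}$ is the same as $u(s\,,x)>\e^{\beta s}$ with $s:=\vartheta\log t$, and that as $t$ runs through a unit interval $(b\,,b+1]$ the rescaled time $s=\vartheta\log t$ runs through an interval of length $\vartheta\log(1+1/b)\le\vartheta\log 2<1$, i.e.\ through a subinterval of $(\vartheta\log b\,,\vartheta\log b+1]$.

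First I would excise the small-rescaled-time part of $\mathcal{G}$. Fix $n$ with $\e^n>\e^{1/\vartheta}$. The portion of $\cS_n$ on which $t\le\e^{1/\vartheta}$ is then contained in the rectangle $\{\e^n<|x|\le\e^{n+1}\}\times\{1<t\le\e^{1/\vartheta}\}$, of height at most $\e^{1/\vartheta}$ and total width at most $2\e^{n+1}$; cover it by at most $\mathrm{const}(\vartheta)\cdot\e^{n}$ unit squares, contributing at most $\mathrm{const}(\vartheta)\cdot\e^{(1-\rho)n}$ to $\nu^n_\rho(\mathcal{G})$. This is summable because $\rho>1$, and it is the origin of the floor $1$ in the bound.

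For the rest, let $Q=(a\,,a+1]\times(b\,,b+1]\subseteq\cS_n$ be a unit $\e$-adic square with $b>\e^{1/\vartheta}$, so that $\vartheta\log b>1$. Using the rewriting above, then Proposition~\ref{pr:sup:tail} with $\ell_1=\ell_2=1$ (whose proof, via Lemma~\ref{lem:moments:sup}, applies verbatim to arbitrary real spatial base points) and $\beta\ge24N_2$,
\begin{align*}
	\P\{Q\cap\mathcal{G}\neq\varnothing\}
		&\le\P\bigl\{\exists\,s\in(\vartheta\log b\,,\vartheta\log b+1],\ x\in(a\,,a+1]:\ u(s\,,x)>\e^{\beta s}\bigr\}\\
		&\le N_3\,b^{-N_4\beta^{3/2}\vartheta}.
\end{align*}
Cover each such $Q$ that meets $\mathcal{G}$ by itself; since for each admissible height $b$ there are $O(\e^{n})$ admissible $a$'s and $b$ ranges over at most $\e^{n+1}$ integers,
\[
	\E\Bigl[\,\textstyle\sum_{Q}\e^{-\rho(n+1)}\Bigr]
	\le\e^{-\rho(n+1)}\!\!\sum_{\e^{1/\vartheta}<b\le\e^{n+1}}\!\! \mathrm{const}\cdot\e^{n+1}\,N_3\,b^{-N_4\beta^{3/2}\vartheta}
	\le\mathrm{const}\cdot\e^{(1-\rho)n}\!\!\sum_{1\le b\le\e^{n+1}}\!\! b^{-N_4\beta^{3/2}\vartheta},
\]
the sum being over the bad $Q$'s just described. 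Bounding $\sum_{1\le b\le\e^{n+1}}b^{-N_4\beta^{3/2}\vartheta}\le\mathrm{const}\cdot(n+2)\,\e^{(n+1)\max\{0,\,1-N_4\beta^{3/2}\vartheta\}}$ and using $1+\max\{0,1-N_4\beta^{3/2}\vartheta\}=\max\{1,2-N_4\beta^{3/2}\vartheta\}$, the last display is $\le\mathrm{const}(\vartheta)\cdot(n+2)\,\e^{(\max\{1,2-N_4\beta^{3/2}\vartheta\}-\rho)n}$, which is summable in $n$ exactly when $\rho>\max\{1\,,2-N_4\beta^{3/2}\vartheta\}$. Adding the two contributions, summing over $n$ (the finitely many shells with $\e^n\le\e^{1/\vartheta}$ are irrelevant to $\Dimh$, and the $O(\e^n)$ unit squares straddling $\partial\cV_n\cup\partial\cV_{n+1}$ contribute only a lower-order term---a routine point, cf.\ Proposition~\ref{pr:BT:Dim}), and letting $\rho$ decrease, completes the proof.

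I expect the main obstacle to be the treatment of the ``left part'' of each shell, where $|x|$ is of order $\e^n$ but the rescaled time $\vartheta\log t$ stays bounded. A crude ``(per-box probability)$\times$(number of boxes)'' estimate there yields only $\Dimh\le2$, because for bounded rescaled time the bound $b^{-N_4\beta^{3/2}\vartheta}$ does not decay with $n$. The fix, carried out above, is to keep the exact height dependence $b^{-N_4\beta^{3/2}\vartheta}$ and sum it: the growing rarity of tall peaks as the rescaled time increases is precisely what produces the rate $2-N_4\beta^{3/2}\vartheta$ and, through convergence of $\sum_b b^{-c}$ when $c=N_4\beta^{3/2}\vartheta>1$, caps the dimension at $1$ in that regime. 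One also has to keep track of the constraints $\beta\ge24N_2$ and $\vartheta\log b\ge1$ demanded by Proposition~\ref{pr:sup:tail}, which is why the low-rescaled-time slab is removed at the outset.
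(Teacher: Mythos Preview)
Your argument is correct, and it is more direct than the paper's. You excise only the thin slab $\{t\le\e^{1/\vartheta}\}$ (where Proposition~\ref{pr:sup:tail} is unavailable because $\vartheta\log b<1$), cover it by $O(\e^n)$ unit squares, and on the remaining part retain the exact decay $b^{-N_4\beta^{3/2}\vartheta}$ and sum it over $b$, obtaining the exponent $\max\{1,2-N_4\beta^{3/2}\vartheta\}$ in one step. The paper instead fixes an auxiliary $q>1$ and splits $\cS_n$ into a corner region $\ell_n=(\e^n,\e^{n+1}]\times(0,\e^{n/q}]\cup(0,\e^{n/q}]\times(\e^n,\e^{n+1}]$ and a main part $\mathcal{L}_n=\mathcal{G}\cap(\e^{n/q},\e^{n+1}]^2$. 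On $\ell_n$ it invokes the epigraph result (Corollary~\ref{co:Epi}) to get $\Dimh(\cup_n\ell_n)\le1$; on $\mathcal{L}_n$ it uses the uniform bound $b^{-N_4\beta^{3/2}\vartheta}\le\e^{-N_4\beta^{3/2}\vartheta n/q}$, and then lets $q\downarrow1$ at the end. Your direct summation avoids both Corollary~\ref{co:Epi} and the $q$-limiting step; the paper's decomposition, in turn, separates cleanly the geometric contribution (the corners, dimension $1$) from the probabilistic one (the main block, dimension $(2-N_4\beta^{3/2}\vartheta)_+$), which matches the heuristic in the Introduction. A minor note: your shell indexing has a small off-by-one (weight $\e^{-\rho(n+1)}$ but $b\le\e^{n+1}$), which only affects constants.
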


\begin{proof}
	Clearly, for all $b\ge1$,
	\[
		\ell_2 := \vartheta\log(b+1)-\vartheta\log(b)
		=\vartheta\log\left(1+\frac 1b\right) \le \vartheta\log 2.
	\]
	Therefore, Proposition \ref{pr:sup:tail} implies that
	for all $a,\vartheta>0$, $b\ge 1$, and $\beta\ge24 N_2$,
	\begin{align*}
		&\P\left\{ \exists t\in(b\,,b+1]:\
			\sup_{x\in(a,a+1]} u(\vartheta\log t\,,x) >t^{\vartheta\beta}\right\}\\
		&\hskip2in=\P\left\{ \exists s\in\left( \vartheta\log b\,,
			\vartheta\log(b+1)\right]:\ \sup_{x\in(a,a+1]} u(s\,,x) >\e^{\beta s}\right\}\\
		&\hskip2in\le N_3\vartheta\log(2)\cdot
			b^{-N_4\beta^{3/2}\vartheta}.
	\end{align*}

	Choose and fix an arbitrary constant $q>1$. The
	preceding shows that, uniformly for all
	$b\in(\e^{n/q}\,,\e^{n+1}]$ and integers $n\ge 1$,
	\begin{equation}\label{eq:sup:tail}\begin{split}
		&\P\left\{ \exists t\in(b\,,b+1]:\
			\sup_{x\in(a,a+1]} u(\vartheta\log t\,,x) >t^{\vartheta\beta}
			\right\}\\
		&\hskip2.3in\le N_3\vartheta\log(2)\cdot
			\exp\left(-\left[\frac{N_4\beta^{3/2}\vartheta}{q} \right]n\right).
	\end{split}\end{equation}
	This is the key probability estimate required for the proof.
	
	Let us consider the random set
	\[
		\mathcal{G}_{\vartheta,\beta}:=
		\left\{ (x\,,t)\in(0\,,\infty)\times(1\,,\infty):\
		u(\vartheta\log t\,,x) > t^{\vartheta\beta} \right\}.
	\]
	Next we study the structure of
	$\mathcal{G}_{\vartheta,\beta}\cap\cS_n$ for all sufficiently-large
	integers $n\ge1$. Let us recall that $q>1$ is a fixed but arbitrary real
	number, and then observe that for all integers $n\ge 1$,
	
	\begin{equation}\label{GlL}
		\mathcal{G}_{\vartheta,\beta}\cap\cS_n \subseteq
		\ell_n \cup\mathcal{L}_n,
	\end{equation}
	where
	\begin{align*}
		\ell_n &:= \left( \e^n,\e^{n+1}\right]\times
			\left(0\,,\e^{n/q}\right]
			\cup \left(0\,,\e^{n/q}\right]\times\left( \e^n,\e^{n+1}\right] ,\\
		\mathcal{L}_n &:= \mathcal{L}_n(\vartheta\,,\beta) :=
			\mathcal{G}_{\vartheta,\beta}\cap
			\left( \e^{n/q}\,,\e^{n+1}\right]^2.
	\end{align*}
	Eq.\ \eqref{GlL} essentially decomposes $\mathcal{G}_{\vartheta,\beta}\cap\cS_n$
	into a ``little'' part $\ell_n$ and a ``large'' part $\mathcal{L}_n$.
	Because $q>1$ and
	\[
		\bigcup_{n=1}^\infty\ell_n \subseteq\left\{ (x\,,y)\in(0\,,\infty)^2:\
		y<x^{1/q}\right\}\cup\left\{
		(x\,,y)\in(0\,,\infty)^2:\
		y>x^{1/q}\right\},
	\]
	Corollary \ref{co:Epi} ensures that
	\begin{equation}\label{prev:ineq}
		\Dimh\left(\bigcup_{n=1}^\infty\ell_n\right) \le1.
	\end{equation}
	Owing to \eqref{GlL}, the inequality \eqref{prev:ineq} is good enough to imply that
	\begin{equation}\label{Dim:GlL}
		\Dimh\left(\mathcal{G}_{\vartheta,\beta}\right)
		\le \max\left\{ 1\,,\Dimh\left(\bigcup_{n=1}^\infty
		\mathcal{L}_n\right)\right\}\qquad\text{a.s.}
	\end{equation}
	
	We estimate an upper bound for the Hausdorff dimension of
	$\cup_{n=1}^\infty \mathcal{L}_n$
	as follows: There are at most $O(\e^{2n})$ squares of
	of the form $(a\,,a+1]\times(b\,,b+1]\subset
	(\e^{n/q}\,,\e^{n+1}]^2$, uniformly for all
	integers $n\ge 1$. We can cover each $\mathcal{L}_n$ with only such
	squares of the form $(a\,,a+1]\times(b\,,b+1]$ that additionally satisfy
	\begin{equation}\label{star}
		\sup_{x\in(a,a+1]} u(\vartheta\log t\,,x) >t^{\vartheta\beta}
		\quad\text{for some $t\in(b\,,b+1]$.}
	\end{equation}
	These remarks and \eqref{eq:sup:tail} together show that, for every integer
	$n\ge1$ and for all real numbers $\rho>0$,
	\begin{align*}
		\E\left[\nu_\rho^n\left(\mathcal{L}_n\right)\right]
			&\le \E\left(\sum_{\substack{(a,a+1]\times(b,b+1]\subseteq
			(\e^{n/q},\,\e^{n+1}]^2:\\
			\text{\eqref{star} holds}}} \left(\frac{1}{\e^n}\right)^\rho\right)\\
		&\le \text{const}\cdot\exp\left(
			-\left[ \frac{N_4\beta^{3/2}\vartheta }{q}-2 +\rho\right]n\right),
	\end{align*}
	where the implied constant depends only on $(\vartheta\,,\beta\,,N_2)$.
	In particular,
	\[
		\sum_{n=1}^\infty\nu^n_\rho(\mathcal{L}_n)<\infty\quad\text{a.s.,
		for every $\rho\in\left(2-\frac{N_4\beta^{3/2}\vartheta}{q}\,, 2 \right],$}\quad
	\]
	provided that $2 > q^{-1}N_4\beta^{3/2}\vartheta$. This, and the definition of
	Hausdorff dimension, together imply that
	\[
		\Dimh\left(\bigcup_{n=1}^\infty\mathcal{L}_n\right) \le
		\left(2-\frac{N_4\beta^{3/2}\vartheta}{q}\right)_+\qquad\text{a.s.},
	\]
	where $a_+:=\max(a\,,0)$ for all real $a$, as is usual.
	Because $q>1$ is arbitrary, and since
	the definition of $\mathcal{G}_{\vartheta,\beta}$
	does not depend on $q$, eq.\ \eqref{Dim:GlL} implies that
	\[
		\Dimh\left( \mathcal{G}_{\vartheta,\beta}\right)\le
		\max\left\{ 1\,, 2-N_4\beta^{3/2}\vartheta\right\}
		\qquad\text{a.s.}
	\]
	
	A symmetric argument implies that almost surely,
	\[
		\Dimh\left(\left\{ (x\,,t)\in(-\infty\,,0)\times(1\,,\infty):\
		u(\vartheta\log t\,,x) > t^{\vartheta\beta} \right\}\right)\le
		\max\left\{1\,,2-N_4\beta^{3/2}\vartheta\right\}.
	\]
	Therefore, the preceding two displayed inqualities together imply the
	theorem since
	\[
		\Dimh(\{0\}\times(1\,,\infty))=1,
	\]
	as can be checked
	from first principles or from an
	example of Barlow and Taylor \cite[\S4.1]{BT1989}.
\end{proof}

\subsection{Proof of Theorem \ref{th:SHE} and Corollary \ref{co:SHE}}
We complete this section by first deriving Theorem \ref{th:SHE} and then its
Corollary \ref{co:SHE}, in this order.

\begin{proof}[Proof of Theorem \ref{th:SHE}]
	Because the macroscopic dimension of a set is unaffected by local changes in that set,
	one can see readily that
	\[
		\Dimh\left\{ (x\,,t)\in\R\times(1\,,\infty):\
		u(\vartheta \log t\,,x)\ge t^{\vartheta\beta}\right\}
		= \Dimh\left[S_\vartheta(\mathscr{P}(\beta)) \right].
	\]
	Therefore, we can deduce Theorem \ref{th:SHE} readily from
	Theorems \ref{th:LB:main} and \ref{th:Dim:UB}.
\end{proof}

\begin{proof}[Proof of Corollary \ref{co:SHE}]
	It accord with Theorem \ref{th:SHE}, for every  $\beta_1 >b$ we can choose
	$\vartheta_1 \in (0\,, \varepsilon \beta_1^{-3/2})$ such that
	$1 < \Dimh\left[S_{\vartheta_1}(\mathscr{P}(\beta_1)) \right] <2$.
	Next we first choose $\beta_2 > \beta_1$, sufficiently close to
	$\beta_1$  that $\vartheta_1 < \varepsilon \beta_2^{-3/2}$,
	and then choose $\vartheta_2\in ( \vartheta_1\,, \varepsilon \beta_2^{-3/2})$.
	Theorem \ref{th:SHE} implies that
	$1 < \Dimh\left[S_{\vartheta_i}(\mathscr{P}(\beta_j)) \right] <2$ for $i, j \in \{1, 2\}.$
	Repetitive application of this procedure yields the
	first conclusion of Corollary  \ref{co:SHE}.
	
	To achieve the property in the second conclusion, we
	choose the sequences $\{\beta_i\}_{i=1}^\infty$ and $\{\vartheta_i\}_{i=1}^\infty$
	a little more carefully. For example,
	if we choose $\vartheta_1$ small, then we can choose $\beta_2$ and $\vartheta_2$ such that,
	in addition to the two properties mentioned above, they also satisfy
	\[
		\frac{A} a < \left(\frac{\beta_2}{\beta_1}\right)^{3/2} \frac{\vartheta_2}{\vartheta_1},
	\]
	where $A$ and $a$ are the constants in Theorem \ref{th:SHE}. In this way, we find that
	\[
		\Dimh\left[S_{\vartheta_2}(\mathscr{P}(\beta_2)) \right] <
		\Dimh\left[S_{\vartheta_1}(\mathscr{P}(\beta_1)) \right].
	\]
	An inductive application of this procedure yields two
	sequences $\{\beta_i\}_{i=1}^\infty$ and $\{\vartheta_i\}_{i=1}^\infty$
	such that $n\mapsto \Dimh[S_{\vartheta_n}(\mathscr{P}(\beta_n))]$
	is strictly decreasing. This completes the proof of the corollary.
\end{proof}

\section{A non-intermittent case}\label{sec:non:interm}

Let us now consider our stochastic PDE
\eqref{SHE} under the condition that the function $\sigma$ is constant.
In particular, $\sigma$ fails to satisfy the intermittency condition \eqref{cond:interm}.
For simplicity, we consider only the case that the initial function is identically zero
and $\sigma\equiv1$.
That is, we are interested in the random field $Z$   that solves
\begin{equation}\label{Z}
	\dot{Z}(t\,,x)=\frac{1}{2}Z''(t\,,x)+\xi(t\,,x)
	\qquad\text{for $(t\,,x)\in(0\,,\infty)\times \R$ with $Z(0)\equiv 0$}.
\end{equation}
It is well-known that $Z(t\,,x)$ has the following ``mild formulation'':
\[
	Z(t\,,x)= \int_{(0,t)\times\R} p_{t-s}(y-x) \xi (\d s\, \d y).
\]
See Walsh \cite[Chapter 3]{Walsh}.
In particular, the process $Z$ is a centered Gaussian process
with variance function
\begin{equation}\label{Var(Z)}
	\Var[Z(t\,,x)] = \int_0^t\d s\int_{-\infty}^\infty \d y\ |p_{t-s}(w-x)|^2
	=(t/\pi)^{1/2},
\end{equation}
for all $t>0$ and $x\in\R$.  Because the moments of $Z(t\,,x)$
are completely described by its variance, it follows that
$Z$ is not intermittent in the sense
that its moments do not grow exponentially
with time. As a result, one does not expect the exceptionally-large
peaks of $Z$ to be exponentially large in the time variable.
Still,  we proved a few years ago
\cite{KKX} that the spatial peaks of
$Z(t)$ form a multifractal for every fixed $t>0$.

The main result of this section is the following description of
the complex, multifractal nature of the tall spatio-temporal peaks of $Z$.
The following is the analogue of Theorem \ref{th:SHE} for the constant-coefficient,
linear SPDE \eqref{Z}.

\begin{theorem}\label{th:lin}
	For every $\beta,\varepsilon>0$,
	\[
		\Dimh\left\{ \left(x\,,\exp\left(t^{2\varepsilon}\right)\right)
		\in\R\times(\e \,,\infty):\
		Z(t\,,x)  > \frac{\beta t^{(1/4)+\varepsilon}}{\pi^{1/4}}  \right\}
		= \max\left\{1\,,2-  \frac{\beta^{2}}{2}\right\}
		\quad\text{a.s.}
	\]
\end{theorem}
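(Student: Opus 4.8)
The plan is to re-run, in the Gaussian setting, the upper/lower-bound scheme of \S3, where the hard moment and localization estimates of that section collapse to elementary computations. First I would normalize: by the variance formula \eqref{Var(Z)} the field $\widetilde Z(t\,,x):=Z(t\,,x)/(t/\pi)^{1/4}$ has unit variance at every space-time point and is stationary in $x$ for each fixed $t$, so, writing $\tau:=\exp(t^{2\varepsilon})$ for the rescaled time (hence $t^\varepsilon=(\log\tau)^{1/2}$), the set whose dimension we want is
\[
	\mathcal Z:=\left\{(x\,,\tau)\in\R\times(\e\,,\infty):\
	\widetilde Z\big((\log\tau)^{1/(2\varepsilon)}\,,x\big)>\beta\sqrt{\log\tau}\,\right\},
\]
and, by the classical Gaussian tail $\P\{N(0\,,1)>\lambda\}=\e^{-(1+o(1))\lambda^2/2}$, the governing one-point probability is
\[
	\P\left\{\widetilde Z\big((\log\tau)^{1/(2\varepsilon)}\,,x\big)>\beta\sqrt{\log\tau}\,\right\}
	=\tau^{-\beta^2/2+o(1)}\qquad(\tau\to\infty).
\]
This is exactly the situation of Theorems \ref{th:LB:main} and \ref{th:Dim:UB} with the SHE ``rate'' $L(2\beta)^{3/2}$ replaced by the Gaussian ``rate'' $\beta^2/2$, which already explains the answer $\max\{1\,,2-\beta^2/2\}$; what remains is to check that the scheme goes through with this rate and, crucially, with the sharp constant.

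For the upper bound I would mimic the proof of Theorem \ref{th:Dim:UB}. Fix $q>1$; on the $n$th shell $\cS_n$ separate the ``small-$\tau$'' part ($\tau\le\e^{n/q}$) from the ``large-$\tau$'' part. The union over $n$ of the small-$\tau$ parts sits inside a set of the form $\{|x|\ge\tau^q\}\cup\{\tau\ge|x|^q\}$, which by Corollary \ref{co:Epi} has macroscopic dimension at most $1$. For the large-$\tau$ part I would cover $\mathcal Z\cap\cS_n$ by unit squares $Q=(a\,,a+1]\times(b\,,b+1]$ and bound $\P\{\mathcal Z\cap Q\neq\varnothing\}$ by a maximal inequality: the real-time window corresponding to $(b\,,b+1]$ has length $O(\mathrm{poly}(\log b)/b)$, so the temporal oscillation of $Z$ over $Q$ is negligible; and since $\|\widetilde Z(t\,,x)-\widetilde Z(t\,,y)\|_2\le C|x-y|^{1/2}(t/\pi)^{-1/4}$, the spatial field $\widetilde Z(t\,,\cdot)$ becomes asymptotically flat as $t\to\infty$, so Borell--TIS concentration gives $\P\{\mathcal Z\cap Q\neq\varnothing\}\le b^{-\beta^2/2+o(1)}$ with no loss in the exponent---exactly the phenomenon by which Lemma \ref{lem:moments:sup} and Proposition \ref{pr:sup:tail} cost nothing in the SHE case. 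Summing over the $O(\e^n)$ admissible positions $a$ and over integers $\tau\le\e^{n+1}$, via $\sum_\tau\tau^{-\beta^2/2+o(1)}$, yields $\E[\nu^n_\rho(\mathcal Z)]\le\mathrm{poly}(n)\,\e^{(\max\{1,2-\beta^2/2\}-\rho)n+o(n)}$; a Borel--Cantelli argument then gives $\Dimh(\mathcal Z)\le\max\{1\,,2-\beta^2/2\}$ a.s.

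The lower bound has two pieces. For $\Dimh(\mathcal Z)\ge1$ it suffices to look at one fixed-time slice: for fixed $t_0>1$ the process $x\mapsto Z(t_0\,,x)$ is stationary, continuous and mixing (its covariance $\int_0^{t_0}p_{2r}(x-y)\,\d r$ tends to $0$ as $|x-y|\to\infty$), so by the ergodic theorem $\{x:\,Z(t_0\,,x)>\beta t_0^{(1/4)+\varepsilon}/\pi^{1/4}\}$ has a.s. positive density and hence, by the Barlow--Taylor density theorem \cite{BT1992}, macroscopic dimension $1$; multiplying by $\{\exp(t_0^{2\varepsilon})\}$ embeds it in $\mathcal Z$. (Alternatively one may quote \cite{KKX} as in the Remark after Theorem \ref{th:LB:main}.) For $\Dimh(\mathcal Z)\ge2-\beta^2/2$ I would reproduce the measure-construction argument of Theorem \ref{th:LB:main}: fix $\gamma\in(\beta^2/2\,,1)$, partition the diagonal square $(\e^n,\e^{n+1}]^2$ into $x$-intervals of length $\e^{\gamma n}$, show that for each such interval and each integer $\tau$ in the shell there is a.s. a point of $\mathcal Z$ once $n$ is large, build the purely atomic measure placing one unit mass per (interval, $\tau$) pair, and run the box estimates verbatim to get $\nu^n_{2-\gamma}(\mathcal Z)\ge\text{const}$ and hence $\Dimh(\mathcal Z)\ge2-\gamma$; letting $\gamma\downarrow\beta^2/2$ finishes. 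The one genuine (but light) step is producing independence across well-separated $x$'s: as in \S3.2 I would pass to the finite-range field $Z^{(c)}(t\,,x):=\int_{(0,t)\times[x-\sqrt{ct},\,x+\sqrt{ct}]}p_{t-s}(y-x)\,\xi(\d s\,\d y)$, for which here $\|Z(t\,,x)-Z^{(c)}(t\,,x)\|_2^2=\int_0^t\!\!\int_{\{|y-x|>\sqrt{ct}\}}[p_{t-s}(y-x)]^2\,\d y\,\d s\le\e^{-c}\sqrt{t}/\sqrt\pi$ is a one-line bound (no Picard iteration), so taking $c=c_n$ a suitable power of $n$ makes the error negligible on the $n$th shell while forcing $Z^{(c_n)}(t\,,x_i)$ and $Z^{(c_n)}(t\,,x_j)$ independent whenever $|x_i-x_j|>2\sqrt{c_n}\,t$.

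The main difficulty is the \emph{sharp} constant $\beta^2/2$ on both sides. In the upper bound one must verify that the overhead of the maximal inequality---the temporal oscillation over an exponentially thin $t$-window, the residual spatial oscillation of the asymptotically flat field $\widetilde Z(t\,,\cdot)$, and the logarithmic correction in the Gaussian tail---contributes only a $\tau^{o(1)}$ factor that does not survive into the exponent once one sums $\sum_\tau\tau^{-\beta^2/2+o(1)}$ over each shell. Symmetrically, in the lower bound one needs the matching lower tail $\P\{\widetilde Z>\beta\sqrt{\log\tau}\}\ge\tau^{-\beta^2/2-o(1)}$ together with the localization error and the decorrelation gap $2\sqrt{c_n}\,t$ all controlled on the same scale, so that the Borel--Cantelli sum $\sum_n\e^{(2-\gamma)n}\cdot(\text{failure probability})$ still converges after $\gamma$ is pushed down toward $\beta^2/2$---the failure probability being super-polynomially small in $n$ precisely because $Z^{(c_n)}$ gives exact independence. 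These are the same points at which \S3 had to work hardest for the SHE; the Gaussian structure renders each of them routine, but they must still be carried out.
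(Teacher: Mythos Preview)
Your proposal is correct and follows essentially the same route as the paper: the upper bound via the $q$-decomposition into a dimension-$1$ ``thin'' region plus a unit-square covering controlled by Borell--TIS (the paper's Proposition~\ref{pr:tail:lin} and Proposition~\ref{pr:UB:lin}), and the lower bound via the finite-range localization $Z^{(B)}$ for exact independence, the measure construction on the diagonal boxes, and the Barlow--Taylor density theorem (the paper's Lemma~\ref{lem:lin:indep} and Proposition~\ref{pr:LB:lin}). Your explicit treatment of the $\Dimh\ge 1$ piece via a fixed-time slice and ergodicity is a small addition the paper leaves implicit, and your direct summation $\sum_\tau\tau^{-\beta^2/2+o(1)}$ is a mild sharpening over the paper's uniform bound on $(\e^{n/q},\e^{n+1}]$ followed by $q\downarrow 1$, but neither changes the architecture.
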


\begin{remark}
	There is no canonical choice of how one can measure the heights of
	the very tall spatio-temporal peaks of $Z$. However,
	in light of \eqref{Var(Z)},
	the gauge function $t\mapsto t^{(1/4)+\varepsilon}$ is a natural choice.
	And Theorem \ref{th:lin} basically says
	that a certain ``stretching'' 	$[(x\,,t)\mapsto (x\,,\exp(t^{2\varepsilon}))]$
	of the random set
	\[
		\left\{(x\,,t)\in\R\times(\e\,,\infty):\
		Z(t\,,x) > \frac{\beta t^{(1/4)+\varepsilon}}{\pi^{1/4}} \right\}
	\]
	has dimension $\max\{1\,,2-\beta^2/2\}$ a.s.\ for every $\beta, \varepsilon>0$.
	By contrast with Theorem \ref{th:SHE}, however,  the stretch factor here
	is not arbitrary and depends on $\varepsilon$. In principle,
	however, it should be possible
	to produce similar results for various stretch factors.
\end{remark}

We conclude this section by presenting two prefatory results
about the solution $Z$ to \eqref{Z}. These results will be used
in the next 2 subsections in order to complete the proof of
Theorem \ref{th:lin}.

First, we observe the following immediate
consequence of the Gaussian nature of the law of $Z$;
see also \eqref{Var(Z)}.

\begin{lemma}\label{lem:lin:tail}
	There exists a constant $c>1$ such that
	\[
		\frac{1}{c\lambda}\, \e^{ -\lambda^2/2} \leq
		\P \left\{ Z(t\,,x) \geq (t/\pi)^{1/4} \lambda \right\}
		\leq \frac{c}{\lambda}\,  \e^{ -\lambda^2/2},
	\]
	uniformly for all $t>0$, $x\in\R$ and $\lambda>1$.
\end{lemma}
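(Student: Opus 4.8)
The plan is to reduce the statement to the classical Gaussian (Mills-ratio) tail estimate by a rescaling. By \eqref{Var(Z)}, $Z(t\,,x)$ is a centered Gaussian random variable with variance $(t/\pi)^{1/2}$, so its standard deviation is exactly $(t/\pi)^{1/4}$. Hence $W := (\pi/t)^{1/4}Z(t\,,x)$ is a standard normal random variable, and
\[
	\P\left\{ Z(t\,,x) \ge (t/\pi)^{1/4}\lambda\right\} = \P\{W \ge \lambda\}
\]
for every $t>0$, $x\in\R$, and $\lambda>0$. In particular, the left-hand side does not depend on $(t\,,x)$, and those are the only variables appearing in the assertion; so it remains to bound $\P\{W\ge\lambda\}$ above and below by $c\lambda^{-1}\e^{-\lambda^2/2}$ and $c^{-1}\lambda^{-1}\e^{-\lambda^2/2}$, uniformly for $\lambda>1$, for a suitable $c>1$.

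For the upper bound I would use the elementary inequality $\P\{W\ge\lambda\}\le (\sqrt{2\pi}\,\lambda)^{-1}\e^{-\lambda^2/2}$, valid for all $\lambda>0$, which follows from $\int_\lambda^\infty \e^{-z^2/2}\,\d z \le \lambda^{-1}\int_\lambda^\infty z\,\e^{-z^2/2}\,\d z = \lambda^{-1}\e^{-\lambda^2/2}$. For the lower bound I would split into two regimes. When $\lambda\ge\sqrt2$, the standard refinement $\P\{W\ge\lambda\}\ge (\sqrt{2\pi})^{-1}(\lambda^{-1}-\lambda^{-3})\e^{-\lambda^2/2}$ (obtained by one integration by parts together with $\int_\lambda^\infty z^{-2}\e^{-z^2/2}\,\d z\le\lambda^{-3}\e^{-\lambda^2/2}$) yields $\P\{W\ge\lambda\}\ge (2\sqrt{2\pi}\,\lambda)^{-1}\e^{-\lambda^2/2}$, since $1-\lambda^{-2}\ge\tfrac12$ there. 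When $\lambda\in(1\,,\sqrt2)$, the function $\lambda\mapsto\lambda\,\e^{\lambda^2/2}\,\P\{W\ge\lambda\}$ is continuous and strictly positive on the compact interval $[1\,,\sqrt2]$, hence is bounded below there by a positive constant; this covers the remaining range. Taking $c$ to be the largest of the constants produced by the two bounds (and at least $2\sqrt{2\pi}>1$) completes the argument.

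The argument is entirely routine and I do not anticipate a genuine obstacle; the only point deserving a line of care is the near-threshold regime $\lambda\downarrow 1$, where the crude Mills estimate $\lambda^{-1}-\lambda^{-3}$ degenerates to $0$. The compactness argument above circumvents this cleanly, at the (harmless) cost of an unspecified universal constant.
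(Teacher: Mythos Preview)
Your argument is correct and is exactly what the paper has in mind: the paper does not give a written-out proof at all, simply stating the lemma as ``an immediate consequence of the Gaussian nature of the law of $Z$; see also \eqref{Var(Z)},'' which is precisely your normalization-plus-Mills-ratio argument spelled out in detail.
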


Next, we observe, using the semigroup property of the heat kernel, that
\[
	\Corr\left(Z(t\,,x), Z(t\,,y)\right) =  \sqrt{\frac\pi t}
	\int_0^t p_{2s} (x-y) \d s
	=\sqrt\pi\int_0^1 p_{2s}\left(\frac{x-y}{\sqrt t}\right)\d s.
\]
The stationary Gaussian process $x\mapsto Z(t\,,x)$ has the following properties:
\begin{compactenum}
\item[\bf P1.] For all $x\in\R$ and $\alpha\in(0\,,\nicefrac12)$,
	\[
		\lim_{t\to\infty}\Corr\left[ Z(t\,,x) \,, Z(t\,,x+ t^\alpha)\right] =
		\sqrt\pi\int_0^1 p_{2s}(0)\,\d s=1; \text{ and by contrast,}
	\]
\item[\bf P2.] For all $x\in\R$ and $\alpha>\nicefrac12$,
	\[
		0\le \Corr\left[Z(t\,,x) \,, Z ( t\,, x+ t^\alpha)\right]
		\le \exp\left( - \tfrac14 t^{2\alpha-1}\right),
	\]
	which rapidly tends to zero  as $t\to\infty$.
\end{compactenum}
Thus, we may combine {\bf P1} and {\bf P2} in order to
deduce the well-known informal assertion that
the ``correlation length'' of  the stochastic process
$x \mapsto Z(t\,,x)$ is $\sqrt{t}$. From this, and the rapid rate of convergence
to zero in {\bf P2}, one might surmise that
$Z(t\,,x)$ and $Z(t\,,y)$ are in fact \emph{asymptotically independent}
when $|x-y|\gg \sqrt{t}$.  Lemma \ref{lem:lin:indep} below
verifies this by giving  a rigorous meaning to
``asymptotic independence.''

For all $x\in \R$ and $B>0$, define
\[
	Z^{(B)}(t\,,x) := \int_{(0,t)\times[x-(Bt)^{1/2},\ x+(Bt)^{1/2}]}
	p_{t-s}(y-x)\,\xi(\d s\,\d y).
\]
The following lemma is essentially borrowed from Khoshnevisan,
Kim, and Xiao \cite{KKX}; see
Eq.\ (6.20) and Observation 1 of that paper ({\it loc.\ cit.}).

\begin{lemma}\label{lem:lin:indep}
	For all $t,B,\lambda>0$,
	\[
		\sup_{x\in\R}
		\P\left\{ \left| Z(t\,,x) - Z^{(B)}(t\,,x)\right|>\lambda\right\}
		\le 2\exp\left( -\frac{\lambda^2}{2}\sqrt{\frac{\pi}{8t}}\,
		\e^{B/2}\right).
	\]
	In addition, if $x_1,x_2,\ldots,x_m\in\R$ satisfy $|x_i-x_j|>2(Bt)^{1/2}$
	when $1\le i\ne j\le m$, then the random variables
	$Z^{(B)}_t(x_1),\ldots,Z^{(B)}_t(x_m)$ 	are independent.
\end{lemma}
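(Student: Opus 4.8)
The plan is to treat both assertions by exploiting the fact that $Z$ is built from a Wiener integral against the white noise $\xi$, so that every random variable in sight is jointly Gaussian and its law is governed entirely by $L^2$ computations with the heat kernel $p$ of \eqref{heat:kernel}.

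For the tail bound, I would first observe that, by the mild formulation of $Z$ and the definition of $Z^{(B)}$,
\[
	D_t(x) := Z(t\,,x) - Z^{(B)}(t\,,x)
	= \int_{(0,t)\times\{y\in\R:\ |y-x|>(Bt)^{1/2}\}}
	p_{t-s}(y-x)\,\xi(\d s\,\d y)
\]
is a centered Gaussian random variable whose variance, by the Walsh isometry, equals
\[
	\sigma^2 := \int_0^t\d s\int_{\{w\in\R:\ |w|>(Bt)^{1/2}\}}
	\left[p_{t-s}(w)\right]^2\,\d w .
\]
Then I would use the elementary identity $[p_r(w)]^2=(2\sqrt{\pi r})^{-1}p_{r/2}(w)$ to rewrite the inner integral as $(2\sqrt{\pi(t-s)})^{-1}$ times the tail probability $\P\{|\mathcal N|>(Bt)^{1/2}\}$ of a centered Gaussian $\mathcal N$ with variance $(t-s)/2$. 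The standard Gaussian tail estimate (the same one quoted in the proof of Lemma \ref{lem:u-uc}) bounds that tail by $\exp(-Bt/(t-s))\le\e^{-B}$, since $t/(t-s)\ge1$ for $s\in(0\,,t)$. Carrying out the remaining elementary integral $\int_0^t(t-s)^{-1/2}\,\d s=2\sqrt t$ then yields $\sigma^2\le\e^{-B}\sqrt{t/\pi}$, uniformly in $x$.

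With this variance bound in hand, the standard Gaussian tail inequality gives
\[
	\P\left\{|D_t(x)|>\lambda\right\}\le2\exp\!\left(-\frac{\lambda^2}{2\sigma^2}\right)
	\le2\exp\!\left(-\frac{\lambda^2}{2}\sqrt{\frac{\pi}{t}}\,\e^{B}\right),
\]
and since $\sqrt{\pi/t}\,\e^{B}=\sqrt8\,\e^{B/2}\cdot\sqrt{\pi/(8t)}\,\e^{B/2}\ge\sqrt{\pi/(8t)}\,\e^{B/2}$ because $\sqrt8\,\e^{B/2}\ge1$ for $B>0$, this is at least as strong as the claimed bound, completing the first half.

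For the independence assertion, I would note that if $|x_i-x_j|>2(Bt)^{1/2}$ for $i\neq j$, then the space--time strips $(0\,,t)\times[x_i-(Bt)^{1/2}\,,x_i+(Bt)^{1/2}]$ over which the Wiener integrals defining $Z^{(B)}(t\,,x_1),\dots,Z^{(B)}(t\,,x_m)$ are taken are pairwise disjoint, so the corresponding integrands have disjoint supports in $(0\,,t)\times\R$. Since white-noise Wiener integrals of deterministic functions with disjoint supports are uncorrelated, and any finite vector $\big(Z^{(B)}(t\,,x_1),\dots,Z^{(B)}(t\,,x_m)\big)$ is jointly Gaussian, pairwise uncorrelatedness upgrades to mutual independence. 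The only point needing any attention is the constant-chasing in the variance estimate so that the final exponent comes out in the stated form; but the crude bound $\e^{-Bt/(t-s)}\le\e^{-B}$ already suffices for this, and there is no genuine obstacle in the lemma.
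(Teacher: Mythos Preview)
Your argument is correct. The paper does not prove this lemma; it merely cites Eq.~(6.20) and Observation~1 of \cite{KKX}. Your direct computation---identifying $Z(t,x)-Z^{(B)}(t,x)$ as a centered Gaussian, bounding its variance via $[p_r(w)]^2=(2\sqrt{\pi r})^{-1}p_{r/2}(w)$ and the crude estimate $\e^{-Bt/(t-s)}\le\e^{-B}$, and then invoking the Gaussian tail---is exactly the kind of calculation that underlies the cited result. In fact you obtain the sharper exponent $\tfrac{\lambda^2}{2}\sqrt{\pi/t}\,\e^{B}$, which dominates the stated $\tfrac{\lambda^2}{2}\sqrt{\pi/(8t)}\,\e^{B/2}$ for every $B>0$, so the lemma as written follows \emph{a fortiori}. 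The independence claim is likewise correct and is the same disjoint-support argument the paper uses elsewhere (cf.\ the proof of Theorem~\ref{th:localization}).
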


Armed with these preliminary facts, we proceed with developing bounds for
the macroscopic Hausdorff dimension of the tall spatio-temporal peaks of the random
field $Z$.

\subsection{A lower bound}
The main result of this section is the following lower bound
on the macroscopic Hausdorff dimension of the
space-time set of extremely-tall peaks of height $f(t)= t^{(1/4)+\varepsilon}$
and level $\beta$. The following is
the precise statement that we will prove shortly.

\begin{proposition}\label{pr:LB:lin}
	For every $\beta,\varepsilon>0$,
	\[
		\Dimh\left\{ \left(x\,,\exp\left(t^{2\varepsilon}\right)\right)
		\in\R\times(\e \,,\infty):\
		Z(t\,,x)  > \frac{\beta t^{(1/4)+\varepsilon}}{\pi^{1/4}}  \right\}
		\ge \max\left\{1\,,2-  \frac{\beta^{2}}{2}\right\}
		\quad\text{a.s.}
	\]
\end{proposition}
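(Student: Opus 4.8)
The plan is to follow, with the obvious modifications, the blueprint of the proof of Theorem~\ref{th:LB:main}. Write $\tau(y):=(\log y)^{1/(2\varepsilon)}$ for $y>\e$, so that a point $(x\,,y)\in\R\times(\e\,,\infty)$ lies in the set $\mathcal{G}$ of the proposition exactly when $Z(\tau(y)\,,x)>\beta\,\tau(y)^{(1/4)+\varepsilon}\pi^{-1/4}$; note that $\tau(y)^{2\varepsilon}=\log y$ and that $\tau(y)\asymp n^{1/(2\varepsilon)}$ when $y\in(\e^n,\e^{n+1}]$, so that the threshold equals $\lambda\,(\tau(y)/\pi)^{1/4}$ with $\lambda:=\beta\,\tau(y)^{\varepsilon}=\beta(\log y)^{1/2}\asymp\sqrt n$. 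As usual it suffices to bound from below $\Dimh(\mathcal{G}_0)$, where $\mathcal{G}_0:=\mathcal{G}\cap\bigcup_{n\ge0}(\e^n,\e^{n+1}]^2$. The bound $\Dimh(\mathcal{G}_0)\ge1$ --- which is all that is needed when $\beta\ge\sqrt2$ --- follows from a single horizontal slice $y=y_0$: the set $\{x\in\R:Z(\tau(y_0)\,,x)>c_0\}$ is a stationary random subset of $\R$ of positive density, and $x\mapsto Z(\tau(y_0)\,,x)$ is a stationary Gaussian process whose covariance tends to $0$ (see {\bf P2}), hence is mixing and ergodic; so Lebesgue measure restricted to that set, tensored with the point mass at $y_0$, has shell-masses $\gtrsim\e^n$ and box-masses $\le r$, whence the density theorem \cite[Theorem 4.1]{BT1992} gives $\nu^n_1(\mathcal{G}_0)\ge\text{const}>0$ for all large $n$ (one may also simply invoke \cite{KKX} at a constant level).

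For the main estimate assume $\beta<\sqrt2$, fix $\gamma\in(\beta^2/2\,,1)$ and then $\varepsilon'\in(0\,,\gamma-\beta^2/2)$. Subdivide the $n$th diagonal band in the $x$-direction into intervals $I_{j,n}$ of length $\e^{\gamma n}$, $0\le j<\e^{n(1-\gamma)}$, and inside each choose $m\asymp\e^{n(\gamma-\varepsilon')}$ points spaced at least $\e^{\varepsilon' n}$ apart. Fix an integer $y\in(\e^n,\e^{n+1}]$, put $t:=\tau(y)$, and apply Lemma~\ref{lem:lin:indep} with $B=B_n$ chosen as large as the separation allows, i.e.\ $2(B_nt)^{1/2}<\e^{\varepsilon' n}$, so that $B_n$ grows exponentially in $n$. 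Then $Z^{(B_n)}(t\,,x_1)\,,\dots,Z^{(B_n)}(t\,,x_m)$ are genuinely independent; each is a centered Gaussian whose variance is $\sqrt{t/\pi}\,(1-O(\e^{-B_n}))$, so by the Gaussian lower tail (as in Lemma~\ref{lem:lin:tail}) $\P\{Z^{(B_n)}(t\,,x_i)>\beta t^{(1/4)+\varepsilon}\pi^{-1/4}+1\}\gtrsim n^{-1/2}\e^{-\beta^2 n/2}$; and Lemma~\ref{lem:lin:indep} makes $\P\{|Z(t\,,x_i)-Z^{(B_n)}(t\,,x_i)|>1\}$ doubly-exponentially small in $n$. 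Hence
\[
	\P\Bigl\{\sup_{x\in I_{j,n}}Z(\tau(y)\,,x)\le \beta\,\tau(y)^{(1/4)+\varepsilon}\pi^{-1/4}\Bigr\}\le\bigl(1-c'n^{-1/2}\e^{-\beta^2 n/2}\bigr)^{m}+E_n\le\exp\bigl(-\e^{\kappa n/2}\bigr)
\]
for all large $n$, where $E_n$ is the (doubly-exponentially small) localization error and $\kappa:=\gamma-\varepsilon'-\beta^2/2>0$. A union bound over the $\le\e^{n+1}$ integer heights $y\in(\e^n,\e^{n+1}]$ and the $\le\e^{n(1-\gamma)}$ intervals $I_{j,n}$, together with the Borel--Cantelli lemma, shows that a.s.\ for all large $n$ every $I_{j,n}$ contains, for every integer $y\in(\e^n,\e^{n+1}]$, a point $\eta_{j,n}(y)$ with $(\eta_{j,n}(y)\,,y)\in\mathcal{G}_0$.

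Finally, define the purely atomic measure $\mu$ on $\R\times(1\,,\infty)$ shell-by-shell by $\mu(A\cap\cS_n):=\sum_{y\in\Z\cap(\e^n,\e^{n+1}]}\sum_{0\le j<\e^{n(1-\gamma)}}\1_A(\eta_{j,n}(y)\,,y)$. It is carried by $\mathcal{G}_0$, and a.s.\ for all large $n$ one has $\mu(\cS_n)\ge\tfrac12\e^{(2-\gamma)n}$, while a case analysis on whether the side $r\ge1$ of an upright box in $\cS_n$ is $\le\e^{\gamma n}$ or larger gives $\mu((x\,,x+r]\times(y\,,y+r])\le C_\gamma\,r^{2-\gamma}$ --- exactly as in the proof of Theorem~\ref{th:LB:main}. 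The Barlow--Taylor density theorem \cite[Theorem 4.1]{BT1992} then yields $\nu^n_{2-\gamma}(\mathcal{G}_0)\ge\text{const}\cdot\e^{-(2-\gamma)n}\mu(\cS_n)\ge\text{const}>0$ for all large $n$, so $\sum_n\nu^n_{2-\gamma}(\mathcal{G}_0)=\infty$ a.s.\ and hence $\Dimh(\mathcal{G}_0)\ge2-\gamma$ a.s. Letting $\gamma\downarrow\beta^2/2$ along a rational sequence, and combining with the bound $\ge1$ from the first paragraph, gives $\Dimh(\mathcal{G})\ge\Dimh(\mathcal{G}_0)\ge\max\{1\,,2-\beta^2/2\}$ a.s.

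The main obstacle is the small-ball estimate of the second paragraph: one must lower-bound the single-site exceedance probability by essentially $n^{-1/2}\e^{-\beta^2 n/2}$ using the \emph{exact} Gaussian tail for the localized field $Z^{(B_n)}$, while simultaneously keeping $B_n$ large enough (exponentially large in $n$) that the localized fields over $\e^{\varepsilon' n}$-separated sites are truly independent and the replacement error $E_n$ stays negligible after a union bound over $\asymp\e^{(2-\gamma)n}$ events. The numerology --- in particular the constraint $\gamma-\varepsilon'-\beta^2/2>0$ forced by $\e^{n(\gamma-\varepsilon')}\cdot\e^{-\beta^2 n/2}\to\infty$ --- is exactly what produces the exponent $2-\beta^2/2$; the exponential growth rate $\kappa$ comfortably absorbs the polynomial and sub-exponential corrections, so no restriction on $\varepsilon>0$ is needed. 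The remaining steps (the reduction to $\mathcal{G}_0$, the construction of $\mu$ with its box-mass bounds, and the appeal to the density theorem) are routine adaptations of arguments already carried out for Theorem~\ref{th:LB:main} and Proposition~\ref{pr:Epi}.
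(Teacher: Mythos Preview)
Your proposal is correct and follows essentially the same route as the paper's proof: the same reduction to diagonal bands $\mathcal{G}_0$, the same localization via Lemma~\ref{lem:lin:indep} to obtain genuine independence at $m\asymp\e^{n(\gamma-\varepsilon')}$ sites, the same small-ball bound producing a doubly-exponential decay once $\gamma-\varepsilon'>\beta^2/2$, and then the identical atomic-measure construction and density-theorem argument lifted verbatim from the proof of Theorem~\ref{th:LB:main}. Two small remarks: (i) the paper simply takes $B=n$ rather than your exponentially large $B_n$, which already suffices since the separation $\e^{\varepsilon' n}$ dominates $2\sqrt{n\,\tau(y)}\asymp n^{(1+1/(2\varepsilon))/2}$; (ii) your slice argument for $\Dimh\ge1$ yields a measure supported on $\mathcal{G}$, not on $\mathcal{G}_0$ (a fixed height $y_0$ meets $\bigcup_n(\e^n,\e^{n+1}]^2$ in a bounded set), so that paragraph bounds $\Dimh(\mathcal{G})$ directly rather than $\Dimh(\mathcal{G}_0)$---which is in fact what you need, and indeed the paper omits this case entirely.
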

Clearly, Proposition \ref{pr:LB:lin} implies half of the content
of Theorem \ref{th:lin}.

\begin{proof}
	We use the same procedure as in the proof
	of Theorem \ref{th:LB:main}. Consider the random set
	\begin{align*}
		\mathcal{G} :&= \left\{ \left( x\,,
			\exp\left(t^{2\varepsilon}\right)\right)   \in\R\times(\e \,,\infty):
			Z(t\,,x) > \frac{\beta t^{(1/4)+\varepsilon}}{\pi^{1/4}} \right\}\\
		& = \left\{ (x\,,t)\in\R\times(1 \,,\infty):
			Z\left( [\log t]^{1/(2\varepsilon)} \,,x\right) > \frac{\beta}{\pi^{1/4}}
			[\log t]^{(1/2)+[1/(8\varepsilon)]} \right\},
	\end{align*}
	whose macroscopic dimension is of interest to us.
	Define
	\[
		\mathcal{G}_0 := \mathcal{G} \cap \bigcup_{n=0}^\infty
		\left( \e^n,\e^{n+1}\right]^2.
	\]
	Since $\mathcal{G}_0\subseteq\mathcal{G}$ a.s., it suffices to prove
	that
	\[
		\Dimh(\mathcal{G}_0) \ge 2 - \frac{\beta^{2}}{2} \qquad
		\text{a.s.}
	\]
	
	Define, for all reals $\gamma\in(0\,,1)$ and integers $n\ge 0$,
	\[
		a_{j,n} := \e^n + j\e^{n\gamma}
		\qquad 0\le j<\e^{n(1-\gamma)},
	\]
	and
	\[
		\mathcal{I}_{j,n}(\gamma) :=
		\left( a_{j,n}\,, a_{j+1,n}\right]
		\qquad 0\le j<\e^{n(1-\gamma)}.
	\]
	For all such reals $\gamma$ and integers $n$
	and $j$, and for every $\varepsilon\in(0\,,1)$,
	we can find points $x_1,\ldots,x_m\in \mathcal{I}_{j,n}(\gamma)$---depending
	only on $(n\,,\gamma\,,\varepsilon\,,j)$---such that whenever $n>1$:
	\begin{itemize}
		\item[-] $|x_k-x_l|\ge \e^{n\varepsilon}$
			whenever $1\le k< l\le m$; and
		\item[-] $\frac12\e^{n(\gamma-\varepsilon)} \le m \le
			2\e^{n(\gamma-\varepsilon)}$.
	\end{itemize}
	We can write
	\begin{equation}\label{P<T_1T_2}
		\P\left\{\sup_{x\in \mathcal{I}_{j,n}(\gamma)} Z(t\,,x)
		\leq \frac{\beta t^{(1/4)+\varepsilon}}{\pi^{1/4}} \right\}\le
		T_1+T_2,
	\end{equation}
	where
	\begin{align*}
		T_1 &:=
			\P\left\{\sup_{x\in \mathcal{I}_{j,n}(\gamma)} Z^{(n)}(t\,,x)
			\leq \left(\frac{t}{\pi}\right)^{1/4}\beta
			\left[t^\varepsilon+1\right] \right\},\\
		T_2 &:= \P \left\{\sup_{x\in \mathcal{I}_{j,n}(\gamma)}
			\left| Z(t\,,x)-Z^{(n)}(t\,,x) \right| \geq \beta
			\left(\frac{t}{\pi}\right)^{1/4}\right\} .
	\end{align*}
	Thanks to Lemma \ref{lem:lin:tail} and Lemma \ref{lem:lin:indep},
	whenever the condition
	\begin{equation}\label{xx:cond}
		|x_k-x_l|\geq 2\sqrt{nt}
	\end{equation}
	holds, we can deduce that
	\begin{equation}\label{T1:UB}\begin{split}
		T_1 &\le  \left(\P\left\{ Z^{(n)}(t\,,0) \leq (t/\pi)^{1/4}\beta
			\left[ t^\varepsilon+1\right] \right\}\right)^m\\
		&\le \left(1- \P\left\{ Z^{(n)}(t\,,0) \geq
			\left(\frac{t}{\pi}\right)^{1/4}\beta
			\left[ t^\varepsilon+1\right]
			\right\}\right)^m\\
		&\le \left(1- \P\left\{ Z(t\,,0) \ge
			\left(\frac{t}{\pi}\right)^{1/4}\beta \left[ t^\varepsilon+2\right] \right\}
			+  2\exp\left[ -\frac{\beta^2\e^{n/2} }{4\sqrt{2}}\right]\right)^m\\
		&\le\left(1- \frac{\exp\left(-\beta^2\left( t^\varepsilon
			+2\right)^2/2\right)}{c\beta\left( t^\varepsilon+2\right)}
			+  2\exp\left[ -\frac{\beta^2\e^{n/2} }{4\sqrt{2}}\right]\right)^m.
	\end{split}\end{equation}
	[There is nothing special about the fact that the $x$-variable in
	$Z^{(n)}(t\,,0)$ and $Z(t\,,0)$ is chosen as $x=0$ since
	$Z^{(n)}(t,x)$ and $Z(t,x)$ are stationary random fields
	for every fixed $t>0$.]
	An elementary argument now shows that
	\begin{equation}\label{T2:UB}
		T_2 \le m\P \left\{  \left| Z(t\,,x)-Z^{(n)}(t\,,x) \right| \ge
		\beta\left(\frac{t}{\pi}\right)^{1/4}\right\}
		\le 2m\exp\left( -\frac{\beta^2\e^{n/2}}{4\sqrt{2}} \right).
	\end{equation}
	Now, let us plug the bounds \eqref{T1:UB} and \eqref{T2:UB} in
	\eqref{P<T_1T_2}, then replace $t$ by $[\log t]^{1/(2\varepsilon)}$ throughout,
	and use the previously-mentioned inequality,
	$\frac12\e^{n(\gamma-\varepsilon)} \le m \le 2\e^{n(\gamma-\varepsilon)}$, in
	order to obtain the following:
	\begin{align*}
		&\sup_{t\in(\e^n ,\e^{n+1}]}\max_{\substack{j\in\Z\\
			0\le j<\e^{n(1-\gamma)}}}
			\P\left\{\sup_{x\in \mathcal{I}_{j,n}(\gamma)}  
			Z\left( [\log t]^{1/(2\varepsilon)} ,x\right) > \frac{\beta}{\pi^{1/4}}
			[\log t]^{(1/2)+[1/(8\varepsilon)]} \right\}\\
		&\hskip2in\leq \exp\left(-\e^{n(\gamma-\varepsilon-(\beta^2/2)-o(n))
			} \right) +\exp\left( -\frac{\beta^2\e^{n/2}}{4\sqrt{2}}
			+n(\gamma-\varepsilon) \right).		
	\end{align*}
	A change of variables ---from $t$ to $[\log t]^{1/(2\varepsilon)}$---%
	justifies the
	asymptotic independence that is required for the preceding to hold.
	More precisely put, we have
	\[
		\min_{1\le k\neq l\le m}
		|x_k-x_l | \geq  \e^{n\varepsilon}\gg
		\sup_{t\in(\e^n,\,\e^{n+1}]} 2\sqrt{n [\log t]^{1/(2\varepsilon)}},
	\]
	which verifies that \eqref{xx:cond} holds after we replace $t$ by
	$[\log t]^{1/(2\varepsilon)}$.
		
	In any event, we can deduce from the preceding
	that, as $n\to\infty$,
	\begin{align*}
		&\P\left\{ \mathcal{G}_0 \cap
			\left[ \{t\}\times  \mathcal{I}_{j,n}(\gamma)\right] =\varnothing
			\text{ for some $0\le j< \e^{n(1-\gamma)}$ and
			$t\in\Z\cap\left(\e^n,\e^{n+1}\right]$}\right\}\\
		&\le\P\left\{ \min_{\substack{t\in\Z\\t\in(\e^n,\,\e^{n+1}]}}
			\min_{\substack{j\in\Z\\
			0\le j<\e^{n(1-\gamma)}}}
			\sup_{x\in \mathcal{I}_{j,n}(\gamma)}
			\frac{Z\left( [\log t]^{1/(2\varepsilon)}\,,x\right)}{%
			[\log t]^{\frac14+\frac{1}{8\varepsilon}}}
			< \frac{\beta}{\pi^{1/4}}\right\}\\
		&\le \exp\left(n(2-\gamma)-\e^{n(\gamma-\varepsilon-\beta^2/2-o(n))} \right)
			+\exp\left(n(2-\gamma) -\frac{\beta^2\e^{n/2}}{4\sqrt{2}}
			+n(\gamma-\varepsilon) \right).
	\end{align*}
	Thus, the Borel--Cantelli lemma implies that, as long as
	\[
		\varepsilon,\gamma\in(0\,,1)
		\quad\text{satisfy}\quad
		\gamma-\varepsilon>\beta^2/2,
	\]
	the following holds with probability one:
	\[
		\mathcal{G}_0 \cap \left[ \{t\}\times  \mathcal{I}_{j,n}(\gamma)\right]
		\neq\varnothing
		\text{ for all $0\le j< \e^{n(1-\gamma)}$ and
		$t\in\Z\cap\left(\e^n,\e^{n+1}\right]$,}
	\]
	for all but a finite number of integers $n\ge 1$.
	From here, the remainder of the proof follows exactly the same
	pattern as the one for its counterpart in Theorem \ref{th:LB:main};
	see \textbf{Conclusion A} [following shortly after \eqref{a.s.:LB}]
	and its justification.  We omit the remaining
	details.
\end{proof}

\subsection{An upper bound}
The main result of this section is the following upper bound
on the macroscopic Hausdorff dimension of the
space-time set of tall peaks of height $t^\varepsilon$ and level $\beta$.

\begin{proposition}\label{pr:UB:lin}
	For every $\beta,\varepsilon>0$,
	\[
		\Dimh\left\{ \left(x\,, \exp\left(t^{2\varepsilon}
		\right)\right)   \in\R\times(\e \,,\infty):\
		Z(t\,,x) > \frac{\beta t^{(1/4)+\varepsilon}}{%
		\pi^{1/4}}\right\}
		\le \max\left\{1\,,2-  \frac{\beta^{2}}{2}\right\}
		\qquad\text{a.s.}
	\]
\end{proposition}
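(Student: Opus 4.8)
The plan is to follow the scheme of the proof of Theorem \ref{th:Dim:UB}, but with the intermittent maximal inequality (Proposition \ref{pr:sup:tail}) replaced by a sharp Gaussian one. The first step is a maximal inequality for $Z$: I claim there is a finite absolute constant $\kappa$ with
\[
	\E\left[\sup_{x\in(a,a+1]}\ \sup_{s\in(b,b+1]}Z(s,x)\right]\le\kappa
	\qquad\text{for all $a\in\R$ and $b>0$.}
\]
This follows from Dudley's entropy bound applied to the increments of the centered Gaussian field $Z$, once one checks from \eqref{Var(Z)} and the semigroup property that the canonical (i.e.\ $L^2(\Omega)$) modulus of continuity of $Z$ over a unit box is bounded by $C(|x-x'|^{1/2}+|s-s'|^{1/4})$ with $C$ independent of the base time---a short explicit computation of $\E[(Z(s,x)-Z(s',x'))^2]$. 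Feeding this into the Borell--TIS Gaussian concentration inequality, with maximal variance $((b+1)/\pi)^{1/2}$, yields, uniformly in $a$ and as $b\to\infty$,
\[
	\P\left\{\exists\,(x,s)\in(a,a+1]\times(b,b+1]:\ Z(s,x)>\frac{\beta s^{(1/4)+\varepsilon}}{\pi^{1/4}}\right\}
	\le\exp\left(-\frac{\beta^2 b^{2\varepsilon}}{2}\,(1-o(1))\right).
\]
It is the exact constant $\tfrac12$ here that produces the coefficient $\beta^2/2$ in the final dimension.

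Second, I would rewrite the set in question, say $\mathcal G$, in the real-time coordinate exactly as in Proposition \ref{pr:LB:lin}, so that a point $(x,\exp(s^{2\varepsilon}))$ of $\mathcal G$ becomes the point $(x,t)$ with $s=[\log t]^{1/(2\varepsilon)}$. Fix $q>1$ and decompose $\mathcal G\cap\cS_n$, just as in \eqref{GlL}, into a ``little'' part $\ell_n$ (points with one coordinate $\le\e^{n/q}$) and a ``large'' part $\mathcal L_n$ (both coordinates $>\e^{n/q}$). Up to a bounded set, $\bigcup_n\ell_n$ is contained in $\{(x,y)\in(0,\infty)^2:\,y\le x^{1/q}\}\cup\{(x,y)\in(0,\infty)^2:\,x\le y^{1/q}\}$, so Corollary \ref{co:Epi} together with mid-axial symmetry gives $\Dimh(\bigcup_n\ell_n)\le1$.

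Third, for the large parts: every point of $\mathcal L_n$ has $t>\e^{n/q}$, hence real time $s>(n/q)^{1/(2\varepsilon)}$, hence $s^{2\varepsilon}>n/q$; moreover the image under $t\mapsto s$ of a unit $t$-interval is, for $n$ large, an $s$-interval of length $<1$ whose left endpoint still exceeds $(n/q)^{1/(2\varepsilon)}$. There are $O(\e^{2n})$ unit squares meeting $\cS_n$ (one may use non-dyadic unit squares by Proposition \ref{pr:BT:Dim}), and by the maximal inequality above each of them meets $\mathcal G$ with probability at most $\exp(-\beta^2 n(1-o(1))/(2q))$. Hence $\E[\nu^n_\rho(\mathcal L_n)]=O(\exp(n[2-\beta^2(1-o(1))/(2q)-\rho]))$, which is summable in $n$ as soon as $\rho>2-\beta^2/(2q)$; therefore $\Dimh(\bigcup_n\mathcal L_n)\le(2-\beta^2/(2q))_+$ a.s. Since $\Dimh$ of a union is the maximum of the dimensions, combining with the little-part bound and letting $q\downarrow1$ gives $\Dimh(\mathcal G\cap((0,\infty)\times(1,\infty)))\le\max\{1,2-\beta^2/2\}$ a.s. The negative-$x$ half is identical by spatial stationarity of $x\mapsto Z(s,x)$, and $\Dimh(\{0\}\times(1,\infty))=1$, which completes the argument.

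The main obstacle is the maximal inequality with the correct constant: one must bound $\E[\sup_{\text{unit box}}Z]$ by a quantity that is $o(s^{(1/4)+\varepsilon})$ uniformly in the base time $s$ (the $O(1)$ bound above does this), which in turn forces the space- and time-moduli of $Z$ to be controlled uniformly in $s$; this is precisely the step at which the sharp Gaussian tail must be preserved rather than being degraded, as would happen with the crude Kolmogorov-continuity-plus-Chebyshev bound used for $u$ in Lemma \ref{lem:moments:sup}. Once that is in hand the remaining steps run parallel to the proof of Theorem \ref{th:Dim:UB}.
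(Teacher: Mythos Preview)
Your proposal is correct and follows essentially the same route as the paper's proof. The paper packages your maximal inequality as a separate Proposition \ref{pr:tail:lin}: it bounds the expectation of the supremum over a unit space-time box via a quantitative Kolmogorov continuity theorem (rather than Dudley's entropy bound, though the two are interchangeable here), then applies Borell--Sudakov--Tsirelson to obtain exactly the tail estimate $\exp(-\tfrac12\beta^2 b^{2\varepsilon}(1-o(1)))$ you describe; the decomposition into $\ell_n\cup\mathcal L_n$, the appeal to Corollary \ref{co:Epi}, the covering of $\mathcal L_n$ by unit squares, and the limit $q\downarrow 1$ are all carried out verbatim as in your outline and as in the proof of Theorem \ref{th:Dim:UB}.
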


Theorem \ref{th:lin} is manifestly a consequence of
Propositions \ref{pr:LB:lin} and \ref{pr:UB:lin}.

We first consider the following estimate of the tail probability.
This estimate is essential to the proof of Proposition \ref{pr:UB:lin}.

\begin{proposition}\label{pr:tail:lin}
	Let $f: (1\,,\infty) \to (1\,,\infty)$ be a strictly increasing function
	that satisfies $\lim_{t\to\infty} f(t)=\infty$.
	Then, for all $\ell\ge1$ there exists a finite constant $C=C(\ell)>1$
	such that
	\[
		\P\left\{\exists t \in (b\,, b+\ell]:\
		\sup_{x\in (a,a+1]} \frac{Z(t\,,x)}{(t/\pi)^{1/4}} \ge \beta f(t)
		 \right\} \leq 2\exp\left\{ -\frac{(\beta f(b))^2}{2}+
		 C\beta f(b)\left(\frac{\ell}{b^{1/4}}+1\right)\right\},
	\]
	uniformly for every $a\in\R$ and all sufficiently large $b>1$,
\end{proposition}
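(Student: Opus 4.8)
The plan is to deduce the estimate from a single application of the Borell--Sudakov--Tsirelson--Ibragimov (Borell--TIS) Gaussian concentration inequality, after normalizing $Z$ to unit variance. Set $W(t\,,x):=(\pi/t)^{1/4}Z(t\,,x)$. The heat-kernel computation behind \eqref{Var(Z)} gives $\Var[W(t\,,x)]=(\pi/t)^{1/2}\Var[Z(t\,,x)]=1$ for every $t>0$ and $x\in\R$, and $Z$ (hence $W$) has a continuous modification (Walsh \cite{Walsh}), so suprema of $W$ over compact sets are genuine random variables. Since $f$ is strictly increasing, $\beta f(t)\ge\beta f(b)$ for all $t\in(b\,,b+\ell]$, and therefore the event in the proposition is contained in $\{\sup_{K}W\ge\beta f(b)\}$ with $K:=[b\,,b+\ell]\times[a\,,a+1]$. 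Because $\sup_{K}\Var(W)=1$, Borell--TIS yields
\[
	\P\bigl\{\sup_{K}W\ge u\bigr\}\le\exp\left(-\tfrac12\bigl(u-\E\!\sup_{K}W\bigr)^2\right)
	\qquad\text{for every }u\ge\E\!\sup_{K}W.
\]

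Write $m_K:=\E\sup_{K}W$. The second step is to produce a constant $C_1=C_1(\ell)>1$, independent of $a\in\R$ and of $b>1$, with $m_K\le C_1$ (in fact $m_K\to0$ as $b\to\infty$, so the cruder bound $m_K\le C_1(\ell/b^{1/4}+1)$ appearing in the statement is amply true). Granting this, apply the display with $u=\beta f(b)$. If $\beta f(b)>m_K$, then $\tfrac12(\beta f(b)-m_K)^2\ge\tfrac12(\beta f(b))^2-m_K\beta f(b)\ge\tfrac12(\beta f(b))^2-C_1(\ell/b^{1/4}+1)\beta f(b)$, which gives the proposition with $C:=C_1$ and the harmless prefactor $2$. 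If instead $\beta f(b)\le m_K\le C_1\le C_1(\ell/b^{1/4}+1)$, then the exponent $-\tfrac12(\beta f(b))^2+C_1\beta f(b)(\ell/b^{1/4}+1)=\beta f(b)\bigl(C_1(\ell/b^{1/4}+1)-\tfrac12\beta f(b)\bigr)$ is strictly positive, so the right-hand side exceeds $2$ and the bound holds trivially.

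It remains to estimate $m_K$. I would run a Dudley/chaining bound in the canonical metric $d\bigl((t\,,x)\,,(s\,,y)\bigr):=\|W(t\,,x)-W(s\,,y)\|_{L^2(\Omega)}$. The two inputs are elementary Gaussian-noise computations, of the same kind as the correlation formula displayed just before \textbf{P1}: (i) $\Var[Z(t\,,x)-Z(t\,,y)]=2\int_0^t\bigl(p_{2\rho}(0)-p_{2\rho}(x-y)\bigr)\d\rho\le2\int_0^\infty\bigl(p_{2\rho}(0)-p_{2\rho}(x-y)\bigr)\d\rho=|x-y|$ for all $t>0$; and (ii) for $0<s<t$, $\Var[Z(t\,,x)-Z(s\,,x)]=\sqrt{t/\pi}+\sqrt{s/\pi}-\sqrt{2(t+s)/\pi}+\sqrt{2(t-s)/\pi}\le\sqrt{2|t-s|/\pi}$, the last step being $(\sqrt t+\sqrt s)^2\le2(t+s)$. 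Since the multiplier $t\mapsto(\pi/t)^{1/4}$ changes by only $O(b^{-5/4})$ across $t\in[b\,,b+\ell]$ while $\|Z(t\,,x)\|_{L^2(\Omega)}=O(b^{1/4})$, combining (i) and (ii) gives $d\bigl((t\,,x)\,,(s\,,y)\bigr)\le C\,b^{-1/4}\bigl(|x-y|^{1/2}+|t-s|^{1/4}\bigr)+O(b^{-1})$ on $K$ with $C$ absolute; hence $\mathrm{diam}_d(K)=O(b^{-1/4})$ with a constant depending only on $\ell$, and the $d$-covering numbers obey $N(\epsilon)\le C(\ell)\,\epsilon^{-6}b^{-3/2}$ in the relevant range. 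After the substitution $\epsilon=b^{-1/4}\eta$ the $b$-dependence factors out of Dudley's entropy integral and one obtains $m_K\le C(\ell)\,b^{-1/4}\le C_1(\ell)$, uniformly in $a$ by the spatial stationarity of $x\mapsto Z(t\,,x)$. (Alternatively, one covers $[b\,,b+\ell]$ by $\lceil\ell\rceil$ unit time-intervals, bounds the expected supremum of $W$ over each resulting $1\times1$ box by an absolute multiple of $b^{-1/4}$ via a Kolmogorov-continuity estimate, and recombines the $\lceil\ell\rceil$ pieces through Gaussian concentration; this variant yields the $\ell/b^{1/4}$ shape literally.)

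The genuinely delicate point is this last estimate on $m_K$: one must make the decay in $b$ and the dependence on $\ell$ explicit and see that everything is uniform in $a$. The reduction to a supremum over $K$, the variance identity $\Var[W]\equiv1$, the Borell--TIS step, and the trivial case are all routine; the entropy-integral computation — in particular separating the regime where $K$ is a single $d$-ball from the small-$\epsilon$ regime — is where the work lies.
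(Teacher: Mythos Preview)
Your proof is correct and follows essentially the same architecture as the paper's: reduce via monotonicity of $f$ to $\{\sup_K W\ge\beta f(b)\}$ with $\sup_K\Var(W)=1$, apply the Borell--Sudakov--Tsirelson inequality, and control $m_K=\E\sup_K W$ by a uniform constant times $b^{-1/4}$. The only difference is in how you bound $m_K$: the paper anchors at a single point and invokes a quantitative Kolmogorov continuity theorem for the increments (splitting $(b,b+\ell]$ into unit time-intervals to pick up the factor $\ell$), whereas you run Dudley's entropy integral directly in the canonical metric---your parenthetical ``alternative'' is in fact the paper's route. Both methods give the same $m_K\le C(\ell)(\ell b^{-1/4}+1)$ shape and the remainder of the argument is identical.
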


\begin{proof}
	It is well-known  that there exists a finite constant $C>0$
	such that for all $t>0$ and $x,y\in\R$,
	\[
		 \E\left(|Z(t\,,x)-Z(s,\, y)|^2\right)
		 \leq C\left( |t-s|^{1/2}+|x-y|\right).
	\]
	See, for example, \S3.3 of Khoshnevisan \cite{CBMS}.
	Since $Z$ is a Gaussian process,
	a quantitative form of the Kolmogorov continuity theorem
	\cite[Theorem C.6, p.\ 114]{CBMS} implies that
	\[
		C_1:=\sup_{a\in\R,b>1}
		\E \left( \sup_{x, y\in (a,a+1]}\sup_{s, t\in (b, b+1]} |Z(t\,,x) -Z(s,\,y)| \right)
		<\infty.
	\]
	Because of this, and the fact that the random variable $Z(t\,,x)$
	has a centered normal distribution
	with variance $\sqrt{t/\pi}$, we obtain the following: Uniformly for all
	$a\in\R$, $b>1$, and $\ell\ge 1$,
	\begin{align}\notag
		\E\left(\sup_{x\in (a,a+1]} \sup_{t\in (b, b+\ell]}
			\frac{Z(t\,,x)}{(t/\pi)^{1/4}} \right)
		& \leq \E\left(\sup_{x\in (a,a+1]} \sup_{t,s\in (b, b+\ell]}
			\frac{|Z(t\,,x)-Z(s\,,x)|}{(t/\pi)^{1/4}} \right) +
			\E\left| \frac{Z(b\,,a)}{(b/\pi)^{1/4}} \right|\\\notag
		&\leq (\pi/b)^{1/4}C_1\ell + 1\\
		&\le C_2 \left(\frac{\ell}{b^{1/4}}+1\right),
		\label{eq:supE:lin}
	\end{align}
	where $C_2:=\max(C_1\,,1)$ is a finite constant that is independent of $(a\,,b\,,\ell)$.
	With \eqref{eq:supE:lin} under way, we can easily complete the proof.
	
	Define
	\[
		X_{a,b,\ell}:=\sup_{x\in (a,a+1]}\sup_{t\in (b, b+\ell]} \frac{Z(t\,,x)}{(t/\pi)^{1/4}}.
	\]
	Then, clearly,
	\begin{align*}
		\P\left\{\exists t \in (b, b+\ell]:\
			\sup_{x\in (a,a+1]} \frac{Z(t\,,x)}{(t/\pi)^{1/4}} \geq \beta f(t) \right\}
			&\le \P\left\{\sup_{x\in (a,a+1]}\sup_{t\in (b, b+\ell]}
			\frac{Z(t\,,x)}{(t/\pi)^{1/4}} \geq \beta f(b) \right\} \\
		&=\P\left\{X_{a,b,\ell}-\E X_{a,b,\ell} \geq \beta f(b)-\E X_{a,b,\ell}  \right\}\\
		&\le \P\left\{\left| X_{a,b,\ell}-\E X_{a,b,\ell}\right|
			\geq \beta f(b)-\E X_{a,b,\ell}  \right\}.
	\end{align*}
	A standard appeal to the Borell, Sudakov--T'sirelson inequality
	now yields
	\[
		\P\left\{\exists t \in (b\,, b+\ell]:\
		\sup_{x\in (a,a+1]} \frac{Z(t\,,x)}{(t/\pi)^{1/4}} \geq \beta f(t) \right\}
		\leq  2\exp\left\{ -\tfrac12(\beta f(b)-\E X_{a,b,\ell})^2 \right\}.
	\]
	[For a readable account see Adler \cite[Chapter II]{Adler}.]
	As $t\uparrow\infty$, $f(t)$ increases strictly to $\infty$.
	Therefore, for all sufficiently large $b$,
	\[
		 \beta f(b) > C_2\left(\frac{\ell}{b^{1/4}}+1\right) \geq \E X_{a,b,\ell}.
	 \]
	 Therefore, \eqref{eq:supE:lin} implies the result.
\end{proof}

Armed with Proposition \ref{pr:tail:lin}, we next proceed with the proof
of Theorem \ref{pr:UB:lin}.

\begin{proof}[Proof of Proposition \ref{pr:UB:lin}]
	We follow the general procedure, and use the same notation, as in the proof
	of Theorem \ref{th:Dim:UB}. Recall the random set
	\begin{align*}
		\mathcal{G} :&= \left\{ \left(x\,, \exp\left(t^{2\varepsilon}
			\right)\right)   \in\R\times(\e \,,\infty):\
			Z(t\,,x) > \frac{\beta t^{(1/4)+\varepsilon}}{\pi^{1/4}}
			\right\}\\
		& = \left\{ (x\,,t)\in\R\times(1 \,,\infty):\
			Z\left( [\log t]^{1/(2\varepsilon)}\,,x\right) >
			\frac{\beta}{\pi^{1/4}}
			[\log t]^{(1/2)+[1/(8\varepsilon)]}  \right\}.
	\end{align*}
	
	From Proposition \ref{pr:tail:lin}, we have that for all sufficiently large $b$,
	\begin{align*}
		&\P\left\{ \exists t\in
			\left( [\log b]^{1/(2\varepsilon)},
			[\log (b+1)]^{1/(2\varepsilon)}\right]:\!\!
			\sup_{x\in(a,a+1]} Z(t\,,x) >
			\frac{\beta}{\pi^{1/4}} t^{(1/4)+\varepsilon}\right\}\\
		&\hskip3.7in
			\leq 2\exp\left(-\frac{\beta^2\log b}{2} + 
			\widetilde{C}\beta \sqrt{\log(b+1)}  \right),
	\end{align*}
	where $\tilde C$ is a finite and positive constant that
	depends only on the constant
	$C$ of Proposition \ref{pr:tail:lin}.
	
	Choose and fix an arbitrary constant $q>1$. The
	preceding implies that, uniformly for all $a\in \R$, 
	$b\in(\e^{n/q}\,,\e^{n+1}]$ and sufficiently large integers $n\ge 1$,
	\begin{equation}\label{eq:sup:tail:lin}\begin{split}
		&\P\left\{ \left(x\,, \exp\left(t^{2\varepsilon}
			\right)\right)\in\mathcal{G} \,\, 
			\text{for some $(x\,,t)\in (a\,,a+1]\times(b\,,b+1]$}  \right\}\\
		&\hskip3.6in
			\le 2\exp\left(-\frac{\beta^2 n}{2q}+\widetilde{C}\beta\sqrt{n+2}\right).
	\end{split}\end{equation}
	This is the key probability estimate required for the proof.
	
	We now follow the same pattern as we did
	in the proof of Theorem
	\ref{th:Dim:UB}. Let us recall that $q>1$ is a fixed but arbitrary real
	number, and then observe that for all integers $n\ge 1$,
	\[
		\mathcal{G} \cap\cS_n \subseteq\ell_n \cup\mathcal{L}_n,
	\]
	where
	\begin{align*}
		\ell_n &:= \left( \e^n,\e^{n+1}\right]\times
			\left(0\,,\e^{n/q}\right]\cup \left(0\,,\e^{n/q}\right]\times
			\left( \e^n,\e^{n+1}\right],\\
		\mathcal{L}_n &:=
			\mathcal{G}\cap \left( \e^{n/q},\e^{n+1}\right]^2.
	\end{align*}
	As we observed earlier
	in the proof of Theorem \ref{th:Dim:UB}, it is suffices to prove that
	\begin{equation}\label{Dim:GlL:lin}
		\Dimh\left(\mathcal{G}\right)
		\le \max\left\{ 1\,,\Dimh\left(\bigcup_{n=1}^\infty
		\mathcal{L}_n\right)\right\}\qquad\text{a.s.}
	\end{equation}
	We estimate an upper bound for the Hausdorff dimension of
	$\cup_{n=1}^\infty \mathcal{L}_n$
	as follows: There are at most $O(\e^{2n})$ squares of
	of the form $(a\,,a+1]\times(b\,,b+1]\subset
	(\e^{n/q},\e^{n+1}]^2$, uniformly for all
	integers $n\ge 1$. We can cover each $\mathcal{L}_n$ with only such
	squares of the form $(a\,,a+1]\times(b\,,b+1]$ that additionally satisfy
	\begin{equation}\label{star:lin}
		\sup_{x\in(a,a+1]} Z\left(
		[\log t]^{1/(2\varepsilon)}\,,x\right) >
		\frac{\beta}{\pi^{1/4}} t^{(1/4)+\varepsilon}
		\quad \text{for some $t\in (b\,, b+1]$}.
	\end{equation}
	These remarks and \eqref{eq:sup:tail:lin} together show that
	\begin{align*}
		\E\left[\nu_\rho^n\left(\mathcal{L}_n\right)\right]
			&\le \E\left(\sum_{\substack{(a,a+1]\times(b,b+1]\subset
			(\e^{n/q},\,\e^{n+1}]^2:\\
			\text{\eqref{star:lin} holds}}} \left(\frac{1}{\e^n}\right)^\rho\right)\\
		&\le \text{const}\cdot\exp\left(
			-\left[ \frac{\beta^2}{2q}+\rho-2+O(1/\sqrt{n})\right]n\right).
	\end{align*}
	It follows immediately from this bound and
	the Borel--Cantelli lemma that
	\[
		\Dimh\left(\bigcup_{n=1}^\infty\mathcal{L}_n\right) \le
		\left(2-\frac{\beta^2}{2q}\right)_+\qquad\text{a.s.},
	\]
	where $a_+:=\max(a\,,0)$ for all real $a$, as  before.
	Because $q>1$ is arbitrary, and since
	the definition of $\mathcal{G}$
	does not depend on $q$, Eq.\ \eqref{Dim:GlL:lin} implies that
	\[
		\Dimh\left( \mathcal{G}\right)\le
		\max\left\{ 1\,, 2-\frac{\beta^2}{2}\right\}
		\qquad\text{a.s.}
	\]
	The remainder of the proof is exactly the same as the one for
	Theorem \ref{th:Dim:UB}; that is, we use a symmetric argument and a
	general example of Barlow and Taylor \cite[\S4.1]{BT1989}.
\end{proof}

\bigskip\noindent\textbf{Acknowledgements.} Two of us [K.K. and D.K.] would like to thank
	the Mathematical Sciences Research Institute [Berkeley, CA] for providing us with
	a wonderful research environment in October 2015 [D.K.] and the Fall Semester
	[K.K.] of 2015.

\spacing{.8}

\bigskip
\small
\noindent\textbf{Davar Khoshnevisan} [\texttt{davar@math.utah.edu}].
	Department of Mathematics, University of Utah, Salt Lake City ,UT 84112-0090\\[.2cm]
\noindent\textbf{Kunwoo Kim} [\texttt{kunwoo@postech.ac.kr}]\\
\noindent Department of  Mathematics, Pohang University of Science and Technology (POSTECH), Pohang, Gyeongbuk, Korea 37673 \\[.2cm]
\noindent\textbf{Yimin Xiao} [\texttt{xiao@stt.msu.edu}].
	Dept.\  Statistics \&\ Probability,
	Michigan State University, East Lansing, MI 48824

\newpage\appendix\section{A table of universal constants}
\begin{table}[h!]\centering\small
\begin{tabular}{|c||c|}\hline
{\bf Constant}&{\bf Source}\\\hline\hline
$A$&Lemma \ref{lem:moments:loc}\\\hline
$A_0$&Lemma \ref{lem:moments:loc}\\\hline
$b_0$&Proposition \ref{pr:tails}\\\hline
$b_1$&Theorem \ref{th:LB:main}\\\hline
$B$&Lemma \ref{lem:moments:diff:loc}\\\hline
$B_0$&Lemma \ref{lem:moments:diff:loc}\\\hline
$c$&Lemma \ref{lem:lin:tail}\\\hline
$C$&Proposition \ref{pr:tail:lin}\\\hline
$C_0$&Lemma \ref{lem:u-uc}\\\hline
$C_1$&Proof of Proposition \ref{pr:tail:lin}\\\hline
$C_2$&Proof of Proposition \ref{pr:tail:lin}\\\hline
$\widetilde{C}$&Proof of Proposition \ref{pr:UB:lin}\\\hline
$D$&Proof of Lemma \ref{lem:moments:sup}\\\hline
$K$&Proposition \ref{pr:tails}\\\hline
$K_0$&Proposition \ref{pr:tails}\\\hline
$K_1$&Proposition \ref{pr:smallball}\\\hline
$L$&Proposition \ref{pr:tails}\\\hline
$L_0$&Proposition \ref{pr:tails}\\\hline
$L_1$&Proof of Lemma \ref{lem:moments:sup}\\\hline
$L_2$&Proof of Lemma \ref{lem:moments:sup}\\\hline
$L_3$&Proposition \ref{pr:sup:tail}\\\hline
$L_4$&Proposition \ref{pr:sup:tail}\\\hline
$\lip$&Proof of Proposition \ref{lem:moments:loc}\\\hline
$m_0$&Theorem \ref{th:localization}\\\hline
$m_1$&Proposition \ref{pr:smallball}\\\hline
$M$&Lemma \ref{lem:moments}\\\hline
$M_0$&Lemma \ref{lem:moments}\\\hline
$M_1$&Eq.\ \eqref{M_1}\\\hline
$N$&Lemma \ref{lem:moments}\\\hline
$N_0$&Lemma \ref{lem:moments}\\\hline
$N_1$&Lemma \ref{lem:moments:sup}\\\hline
$N_2$&Lemma \ref{lem:moments:sup}\\\hline
$N_3$&Proposition \ref{pr:sup:tail}\\\hline
$N_4$&Proposition \ref{pr:sup:tail}\\\hline
\end{tabular}
\caption{A table of universal constants}
\label{table}\end{table}

\end{document}